\newcommand{\LoadPackagesNow}{}
\newcommand{\LoadPackageLater}[2][]{%
   \g@addto@macro{\LoadPackagesNow}{%
      \usepackage[#1]{#2}%
   }%
}
\g@addto@macro\bfseries{\boldmath}
\numberwithin{equation}{section}
\definecolor{pdfurlcolor}{rgb}{0,0,0.6}
\definecolor{pdffilecolor}{rgb}{0.7,0,0}
\definecolor{pdflinkcolor}{rgb}{0,0,0.6}
\definecolor{pdfcitecolor}{rgb}{0,0,0.6}
\newcommand{\ifargdef}[3][{}]{\ifthenelse{\equal{#2}{}}{#1}{#3}}
\newenvironment{highlight}{\begin{addmargin}[1em]{1em}\itshape}{\end{addmargin}}
\newenvironment{properties}[2][2em]
{\begin{enumerate}[label={\textsc{(#2\arabic*)}},leftmargin=#1]}
{\end{enumerate}} 
\newenvironment{listing}
{\begin{itemize}[itemindent=0em,leftmargin=1.2em]}
{\end{itemize}}
\newenvironment{rmklist}
{\begin{enumerate}[label={(\arabic*)},itemindent=2em,leftmargin=0em]}
{\end{enumerate}}
\newenvironment{thmlist}
{\begin{enumerate}[label={(\arabic*)}]}
{\end{enumerate}}
\newenvironment{proofsteps}
{\begin{enumerate}[label={(\arabic*)},itemindent=2em,leftmargin=0em]}
{\end{enumerate}}
\renewcommand{\qedsymbol}{$_\blacksquare$}
\newcommand{\qeddiamond}{\hfill$\Diamond$}
\providecommand{\qedhere}{\hfill\qedsymbol}
\newtheoremstyle{claim}
	{\topsep}{\topsep}%
	{\itshape}
	{}
	{}
	{}
	{.5em}
	{{\bfseries\boldmath\thmname{#1} \thmnumber{#2}} \thmnote{(#3)}}
\newtheoremstyle{definition}
	{\topsep}{\topsep}%
	{}
	{}
	{}
	{}
	{.5em}
	{\textbf{\thmname{#1} \thmnumber{#2}} \thmnote{(#3)}}
\newtheoremstyle{algorithm}
	{\topsep}{\topsep}%
	{}
	{}
	{\bfseries\boldmath}
	{}
	{.5em}
	{\thmname{#1} \thmnumber{#2} \thmnote{(#3)}}
\theoremstyle{claim}
\newtheorem{theorem}{Theorem}[section]
\newtheorem{lemma}[theorem]{Lemma}
\newtheorem{corollary}[theorem]{Corollary}
\newtheorem{proposition}[theorem]{Proposition}
\theoremstyle{definition}
\newtheorem{definition}[theorem]{Definition}
\newtheorem{remark}[theorem]{Remark}
\newtheorem{example}[theorem]{Example}
\newcommand{\opleft}[1]{\mathopen{}\left#1}
\newcommand{\opright}[1]{\right#1\mathclose{}}
\newcommandx{\braces}[4]{%
\ifstrequal{#3}{normal}{#1#4#2}{%
\ifstrequal{#3}{auto}{\left#1#4\right#2}{%
\ifstrequal{#3}{opauto}{\opleft#1#4\opright#2}{%
#3#1#4#3#2}}}%
}
\newcommandx{\opannot}[3][3=\downarrow]{\stackrel{\mathclap{\substack{#1 \\ #3 \vspace{2pt}}}}{#2}}
\newcommandx{\lineannot}[3][3=\rightarrow]{\mathllap{\boxed{\text{\textsmaller{#1}}} #3} #2}
\newcommandx{\multilineannot}[4][4=\rightarrow]{\mathllap{\boxed{\parbox{#1}{\RaggedRight\textsmaller{#2}}} #4} #3}
\newcommand{\N}{\mathbb{N}} 
\newcommand{\R}{\mathbb{R}} 
\newcommand{\eps}{\varepsilon} 
\newcommand{\suchthat}[1][normal]{\ifstrequal{#1}{normal}{\mid}{#1|}} 
\newcommand{\cardinality}{\#} 
\newcommand{\union}{\cup} 
\newcommand{\intersec}{\cap} 
\newcommandx{\intvcl}[3][1=normal]{\braces{[}{]}{#1}{#2, #3}} 
\newcommandx{\intvop}[3][1=normal]{\braces{(}{)}{#1}{#2, #3}} 
\newcommandx{\intvclop}[3][1=normal]{\braces{[}{)}{#1}{#2, #3}} 
\newcommandx{\intvopcl}[3][1=normal]{\braces{(}{]}{#1}{#2, #3}} 
\DeclareMathOperator*{\argmin}{argmin} 
\DeclareMathOperator{\sign}{sign}
\newcommandx{\abs}[2][1=normal]{\braces{\lvert}{\rvert}{#1}{#2}} 
\newcommandx{\ceil}[2][1=normal]{\braces{\lceil}{\rceil}{#1}{#2}} 
\newcommandx{\floor}[2][1=normal]{\braces{\lfloor}{\rfloor}{#1}{#2}} 
\newcommandx{\round}[2][1=normal]{\braces{[}{]}{#1}{#2}} 
\newcommandx{\der}[1]{D^{#1}} 
\newcommandx{\gradient}{\nabla} 
\newcommandx{\partder}[4][1={},4={}]{\frac{\partial^{#4} #2}{\partial #3^{#4}}\ifargdef{#1}{\Big|_{#1}}} 
\newcommandx{\integ}[4][1={},2={}]{\int_{#1}^{#2} #3 \, #4} 
\newcommandx{\asympffaster}[2][1=normal]{o\braces{(}{)}{#1}{#2}} 
\newcommandx{\asympfaster}[2][1=normal]{O\braces{(}{)}{#1}{#2}} 
\newcommandx{\asympeq}[2][1=normal]{\Theta\braces{(}{)}{#1}{#2}} 
\newcommandx{\asympsslower}[2][1=normal]{\omega\braces{(}{)}{#1}{#2}} 
\newcommandx{\asympslower}[2][1=normal]{\Omega\braces{(}{)}{#1}{#2}} 
\newcommand{\matr}[1]{\begin{bmatrix} #1 \end{bmatrix}} 
\newcommandx{\norm}[2][1=normal]{\braces{\|}{\|}{#1}{#2}} 
\renewcommandx{\sp}[3][1=normal]{\braces{\langle}{\rangle}{#1}{#2, #3}} 
\newcommandx{\End}[2][2={}]{\mathcal{L}\opleft( #1 \ifargdef{#2}{, #2} \opright)} 
\newcommand{\orthcompl}[1]{{#1}^\perp} 
\newcommand{\T}{\mathsf{T}} 
\renewcommand{\vec}[1]{\boldsymbol{#1}} 
\newcommandx{\measure}[2][1=normal]{\operatorname{vol}\braces{(}{)}{#1}{#2}} 
\newcommand{\indset}[1]{\chi_{#1}} 
\DeclareMathOperator{\supp}{supp} 
\newcommandx{\Leb}[3][1={},3=normal]{L^{#2}\ifargdef{#1}{\braces{(}{)}{#3}{#1}}{}} 
\newcommandx{\Lebnorm}[4][1=normal,3={2},4={}]{\norm[#1]{#2}_{#3}} 
\renewcommandx{\l}[3][1={},3=normal]{\ell^{#2}\ifargdef{#1}{\braces{(}{)}{#3}{#1}}} 
\newcommandx{\lnorm}[4][1=normal,3={2},4={}]{\norm[#1]{#2}_{#3}} 
\newcommandx{\Smooth}[4][1={},3={},4=normal]{C_{#3}^{#2}\ifargdef{#1}{\braces{(}{)}{#4}{#1}}} 
\newcommandx{\Schwartz}[2][1={},2=normal]{\mathscr{S}\ifargdef{#1}{\braces{(}{)}{#2}{#1}}} 
\newcommandx{\Schwartzpoly}[2][1=normal]{\braces{\langle}{\rangle}{#1}{\abs[#1]{#2}} } 
\newcommandx{\Tempdistr}[2][1={},2=normal]{\mathscr{S}'\ifargdef{#1}{\braces{(}{)}{#2}{#1}}} 
\newcommandx{\distrinp}[3][1=normal]{\braces{\langle}{\rangle}{#1}{#2, #3}} 
\newcommandx{\ft}[3][1=default,2=auto]{
\ifstrequal{#1}{default}{\widehat{#3}}{
\ifstrequal{#1}{long}{{\braces{(}{)}{#2}{#3}}^{\wedge}}{}}} 
\newcommandx{\ift}[3][1=default,2=auto]{
\ifstrequal{#1}{default}{\check{#3}}{
\ifstrequal{#1}{long}{{\braces{(}{)}{#2}{#3}}^{\vee}}{}}} 
\newcommand{\define}[1]{\emph{#1}}
\newcommand{\y}{\vec{y}}
\newcommand{\yadv}{\vec{\tilde{y}}}
\newcommand{\Y}{\mathcal{Y}}
\newcommand{\A}{\vec{A}}
\newcommand{\Aind}{\vec{B}}
\renewcommand{\a}{\vec{a}}
\newcommand{\z}{\vec{z}}
\newcommand{\zadv}{\vec{\tilde{z}}}
\newcommand{\x}{\vec{x}}
\newcommand{\grtr}{\vec{x}_0}
\newcommand{\grtrmu}{\scalfac\vec{x}_0}
\newcommand{\solu}{\vec{\hat{x}}}
\newcommand{\sset}{K}
\newcommand{\h}{\vec{h}}
\newcommand{\dict}{\vec{D}}
\newcommand{\proj}{\vec{P}}
\newcommand{\advfrac}{\tau}
\newcommand{\advdev}{\eps}
\newcommand{\loss}{\mathcal{L}}
\newcommand{\losssq}{\mathcal{L}^{\text{sq}}}
\newcommand{\score}{\mathcal{S}}
\newcommand{\lossemp}[1][{}]{\bar{\mathcal{L}}_{#1}}
\newcommand{\losssqemp}[1][{}]{\bar{\mathcal{L}}_{#1}^{\text{sq}}}
\newcommand{\losstaylor}[3]{\delta\lossemp[#3](#1, #2)}
\newcommand{\losssqtaylor}[3]{\delta\losssqemp[#3](#1, #2)}
\newcommand{\RSC}{{\text{RSC}}}
\newcommand{\fobs}{f}
\newcommand{\scalfac}{\mu}
\newcommand{\modeldev}{\sigma}
\newcommand{\modeldeveta}{\eta}
\newcommand{\modeldevconst}{C_{\modeldev,\modeldeveta}}
\newcommand{\accuracy}{t_0}
\newcommand{\I}{\vec{I}_n}
\newcommand{\normpol}[2]{\norm{#1}_{{#2}^\circ}}
\newcommandx{\prob}[2][1={},2=normal]{\mathbb{P}\ifargdef{#1}{\braces{[}{]}{#2}{#1}}}
\newcommandx{\mean}[2][1={},2=normal]{\mathbb{E}\ifargdef{#1}{\braces{[}{]}{#2}{#1}}}
\newcommandx{\var}[2][1={},2=normal]{\mathbb{V}\ifargdef{#1}{\braces{[}{]}{#2}{#1}}}
\newcommand{\indprob}[1]{\mathds{1}[#1]} 
\newcommand{\distributed}{\sim}
\newcommand{\Normdistr}[2]{\mathcal{N}(#1, #2)}
\newcommand{\gaussian}{\vec{g}}
\newcommand{\gaussianuniv}{g}
\newcommand{\Covmatr}{\vec{\Sigma}}
\newcommand{\empproc}{\mathsf{F}}
\newcommandx{\ball}[2][1={},2={}]{B_{#1}^{#2}}
\DeclareMathOperator{\convhull}{conv}
\renewcommand{\S}{S}
\newcommand{\meanwidth}[2][{}]{w_{#1}(#2)}
\newcommand{\effdim}[2][{}]{d_{#1}(#2)}
\newcommand{\cone}[2]{\mathcal{C}(#1,#2)}
\begin{document}

\pagestyle{scrheadings}

\noindent{\Large\raggedright\bfseries High-Dimensional Estimation of Structured Signals from \\ Non-Linear Observations with General Convex Loss Functions}

\vspace{1\baselineskip}
\begin{addmargin}[6em]{0em}
\noindent{\normalsize\bfseries\larger{Martin Genzel}}

\noindent Technische Universit\"at Berlin, Department of Mathematics \\
Straße des 17. Juni 136, 10623 Berlin, Germany

\noindent E-Mail: \href{mailto:genzel@math.tu-berlin.de}{\texttt{genzel@math.tu-berlin.de}}

\vspace{1\baselineskip}
{\smaller
\noindent\textbf{Abstract.} 
In this paper, we study the issue of estimating a \emph{structured signal} $\grtr \in \R^n$ from \emph{non-linear  and noisy Gaussian observations}.
Supposing that $\grtr$ is contained in a certain \emph{convex subset} $\sset \subset \R^n$, we prove that accurate recovery is already feasible if the number of observations exceeds the \emph{effective dimension} of $\sset$, which is a common measure for the complexity of signal classes.
It will turn out that the possibly unknown non-linearity of our model affects the error rate only by a multiplicative constant.
This achievement is based on recent works by Plan and Vershynin, who have suggested to treat the non-linearity rather as \emph{noise} which perturbs a linear measurement process.
Using the concept of \emph{restricted strong convexity}, we show that their results for the \emph{generalized Lasso} can be extended to a fairly large class of \emph{convex loss functions}.
Moreover, we shall allow for the presence of \emph{adversarial noise} so that even deterministic model inaccuracies can be coped with.
These generalizations particularly give further evidence of why many standard estimators perform surprisingly well in practice, although they do not rely on any knowledge of the underlying output rule.
To this end, our results provide a unified and general framework for signal reconstruction in high dimensions, covering various challenges from the fields of \emph{compressed sensing}, \emph{signal processing}, and \emph{statistical learning}.

\noindent\textbf{Key words.} 
High-dimensional estimation, non-linear observations, model uncertainty, single-index model, convex programming, restricted strong convexity, Mendelson's small ball method, Gaussian complexity, local mean width, compressed sensing, sparsity
}
\end{addmargin}
\newcommand{\shortauthor}{M. Genzel}

\thispagestyle{plain}

\section{Introduction}

\subsection{Motivation}

Before we introduce the general setup of non-linear measurements and structured signals, let us first consider the classical problem of (high-dimensional) estimation from a \define{linear model}. In this situation, the goal is to recover an unknown \define{signal} $\grtr \in \R^n$ from a set of noisy linear \define{observations} (or \define{measurements})
\begin{equation}\label{eq:intro:linmodel}
y_i = \sp{\a_i}{\grtr} + z_i, \quad i = 1, \dots, m,
\end{equation}
where $\a_1, \dots, \a_m \in \R^n$ are known vectors, defining a \define{measurement process}, and $z_1, \dots, z_m \in \R$ are small (random) perturbations. Obviously, a unique reconstruction of $\grtr$ is impossible in general if $m < n$, even if $z_1 = \dots = z_m = 0$. However, when we assume some additional \define{structure} for our signal-of-interest, such as \define{sparsity},\footnote{By \define{sparsity}, we mean that only a very few entries of $\grtr$ are non-zero.} the problem often becomes feasible and numerous efficient algorithms are available for recovery. One of the most popular approaches is the \define{Lasso} (Least Absolute Shrinkage and Selection Operator), which was originally proposed by Tibshirani \cite{tibshirani1996lasso}:
\begin{equation}
	\min_{\x \in \R^n} \tfrac{1}{2m}\sum_{i = 1}^m (y_i - \sp{\a_i}{\x})^2 \quad \text{subject to $\lnorm{\x}[1] \leq R$.} \label{eq:intro:lasso}\tag{$P_{R\ball[1][n]}$}
\end{equation}
The purpose of this convex program is to perform a least-squares fit to the observations $y_1, \dots, y_m$, whereas the $\l{1}$-constraint encourages a certain degree of sparsity of the solution, controlled by the parameter $R > 0$.

A fundamental question is now the following: 
\begin{highlight}
	How many measurements $m$ are required to achieve an accurate recovery of $\grtr$ by \eqref{eq:intro:lasso}?
\end{highlight}
Interestingly, it has turned out that only about $s\log(\frac{2n}{s})$ measurements are needed to reconstruct an $s$-sparse vector.\footnote{A vector $\grtr \in \R^n$ is called \define{$s$-sparse} if at most $s$ of its entries are non-zero. More precisely, the error of the reconstruction can be bounded in terms of the present noise level.} This (optimal) rate can be realized by randomizing the measurement process, for instance, by choosing i.i.d. Gaussian vectors $\a_i \distributed \Normdistr{\vec{0}}{\I}$. Perhaps the most crucial observation is that $s\log(\frac{2n}{s})$ only \emph{logarithmically} depends on the dimension of the ambient space $\R^n$. This indicates that good estimates are even possible in a high-dimensional setting where $m \ll n$, supposed that the signal-of-interest exhibits some low-dimensional structure. Results of this type are usually associated with the field of \define{compressed sensing}, which originally emerged from the works of Cand\`{e}s, Donoho, Romberg, and Tao \cite{donoho2006cs,candes2005decoding,candes2006recovery,candes2006stable}.

However, the assumptions of a linear model and exact sparsity are often too restrictive for real-world applications. For this reason, there has been a remarkable effort during the previous decade to extend the above setup and corresponding guarantees in several directions.
As an example, one could think of non-linear \define{binary observations} where only the sign of the linear measurement is retained:
\begin{equation}\label{eq:intro:onebitmodel}
y_i = \sign(\sp{\a_i}{\grtr} + z_i), \quad i = 1, \dots, m.
\end{equation}
This situation is closely related to \define{$1$-bit compressed sensing} \cite{boufounos2008onebit} and \define{classification problems}. Apart from that, it is also essential to allow for more flexible signal structures, e.g., \define{approximately sparse vectors}, \define{sparse dictionary representations}  \cite{candes2011csdict,rauhut2008csdict}, or \define{model-based compressed sensing} \cite{baraniuk2010modelcs}. In such a general framework, one might ask again whether the original Lasso-estimator \eqref{eq:intro:lasso} is still able to produce good results, or whether it might be more beneficial to replace the square loss by a specifically adapted functional. We shall address all of these questions later on, including the impact of non-linear measurements, arbitrary convex signal sets, and general convex loss functions. In particular, it will be demonstrated how these different ingredients ``interact'' with each other and how the actual recovery performance is affected by them.

\subsection{Observation Model and Gaussian Mean Width}
\label{subsec:intro:model}

As a first step, let us replace the noisy linear model of \eqref{eq:intro:linmodel} by a more general observation rule. Throughout this paper, we will consider a \define{semiparametric single-index model}, which follows the approach of \cite{plan2014highdim,plan2015lasso}:
\begin{properties}[3em]{M}
\item\label{assump:model:nonlinmod}
	We assume that the \define{observations} obey 
	\begin{equation}\label{eq:intro:model}
	y_i := \fobs(\sp{\a_i}{\grtr}), \quad i = 1, \dots, m,
	\end{equation}
	where $\grtr \in \R^n$ is the \define{ground-truth signal}, $\a_i \distributed \Normdistr{\vec{0}}{\Covmatr}$ are \emph{independent} mean-zero Gaussian vectors with positive definite covariance matrix $\Covmatr \in \R^{n \times n}$, and $\fobs \colon \R \to \Y$ is a (possibly random) function which is independent of $\a_i$.\footnote{The randomness of $\fobs$ is understood measurement-wise, i.e., for every observation $i \in \{1, \dots, m\}$, we have an \emph{independent} copy of $\fobs$. But note that this explicit dependence of $\fobs$ on $i$ is omitted.} 
	Here, $\Y$ is always assumed to be a \emph{closed} subset of $\R$.
	For the sake of compact notation, we also introduce $\y := (y_1, \dots, y_m) \in \Y^m$ and $\A := \matr{\a_1 & \dots & \a_m}^\T \in \R^{m \times n}$.
\end{properties}
The range $\Y$ of $\fobs$ might restrict the set of possible observations. For example, one would have $\Y = \{-1,0,+1\}$ if $\fobs(v) = \sign(v)$.
In general, the function $\fobs$ plays the role of a \define{non-linearity}\footnote{This is somewhat an abuse of notation: Although we always speak of a ``non-linearity,'' this also covers the case of $\fobs$ being the identity (plus noise).} which modifies the output of the linear projection $\sp{\a_i}{\grtr}$ in a certain way. This particularly includes perturbations like additive random noise or random bit-flips (when dealing with binary measurements such as in \eqref{eq:intro:onebitmodel}). Remarkably, it will turn out that our recovery method for $\grtr$ does not explicitly rely on $\fobs$ so that the non-linearity of the model---and especially the distribution of noise---can be even \emph{unknown}.

However, in realistic situations, it might be still too restrictive to assume that the distortion of the (linear) model is \emph{independent} from the measurement process $\A$. We shall therefore also allow for \define{adversarial noise}:
\begin{properties}[3em]{M}
\setcounter{enumi}{1}
\item\label{assump:model:advnoise}
	We assume that the actual observations are given by a vector $\yadv = (\tilde{y}_1, \dots, \tilde{y}_m) \in \Y^m$, which could differ from $\y$. 
	The deviation between $\yadv$ and $\y$ is measured by means of the following \define{adversarial noise parameter}:
	\begin{equation}\label{eq:intro:advnoise}
		\advdev := \Big(\tfrac{1}{m} \sum_{i = 1}^m \abs{\tilde{y}_i - y_i}^2\Big)^{1/2} \geq 0.
	\end{equation}	
\end{properties}
Note that the mismatch between $\tilde{y}_i$ and $y_i$ could be even deterministic or depend on $\a_i$.
We will observe that the error bounds of our main results involve an ``inevitable'' additive term of $\advdev$, which quantifies the level of adversarial noise.

In order to impose a certain structural constraint on the ground-truth signal, we simply assume that $\grtr$ is contained in a \define{signal set} $\sset \subset \R^n$. For example, $\sset$ could be the set of all $s$-sparse vectors or an appropriate convex relaxation of it. Now, we can make our major goal precise:
\begin{highlight}
	Given an observation vector $\yadv \in \Y^m$ and the underlying measurement matrix $\A \in \R^{m \times n}$, find an estimator $\solu \in \sset$ which is close to $\grtr$ (measured in the Euclidean distance\footnote{In this paper, we will measure the estimation error only in terms of the $\l{2}$-norm, but there are of course many other ways to do this.}).
\end{highlight}
The actual ability to recover $\grtr$ will particularly depend on the ``complexity'' of the signal set $\sset$. For instance, it is not very surprising that a signal can be estimated more easily if it is known to belong to a low-dimensional subspace, instead of the entire $\R^n$. A very common and powerful measure of complexity is the so-called \define{(Gaussian) mean width}:
\begin{definition}\label{def:intro:meanwidth}
	The \define{(global Gaussian) mean width} of a set $L \subset \R^n$ is given by
	\begin{equation}\label{eq:intro:meanwidth}
		\meanwidth{L} := \mean[\sup_{\x \in L} \sp{\gaussian}{\x}],
	\end{equation}
	where $\gaussian \distributed \Normdistr{\vec{0}}{\I}$ is a standard Gaussian random vector.
\end{definition}
\begin{figure}
	\centering
	\includegraphics[width=.5\textwidth]{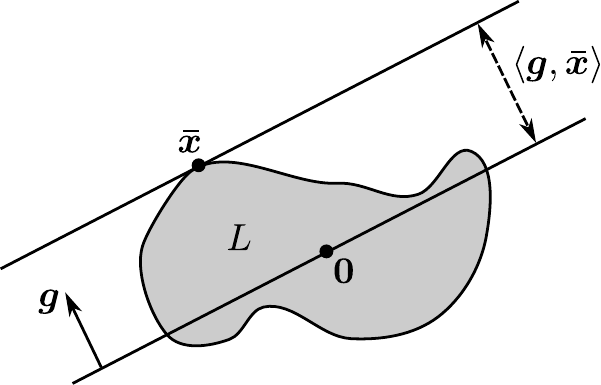}
	\caption{Geometric meaning of the mean width. If $\lnorm{\gaussian} = 1$, then $\sp{\gaussian}{\vec{\bar{x}}} = \sup_{\x \in L} \sp{\gaussian}{\x}$ measures the spatial extent of $L$ in direction of $\gaussian$, that is, the distance between the hyperplanes $\orthcompl{\{\gaussian\}}$ and $\orthcompl{\{\gaussian\}} + \vec{\bar{x}}$. Thus, computing the expected value in \protect\eqref{eq:intro:meanwidth} yields an ``average'' measure for the size of $L$.}
	\label{fig:intro:meanwidth}
\end{figure}
The geometric meaning of Definition~\ref{def:intro:meanwidth} is illustrated in Figure~\ref{fig:intro:meanwidth}.
The notion of mean width originates from the fields of \define{geometric functional analysis} and \define{convex geometry}. But in equivalent forms, it also appears as \define{$\gamma_2$-functional} in stochastic processes (cf. \cite{talagrand2014chaining}) or as \define{Gaussian complexity} in learning theory (cf. \cite{bartlett2003complexity}).
For an intuitive understanding of the mean width, it is helpful to regard its square $\effdim{L} := \meanwidth{L}^2$ as the \define{effective dimension} of the set $L$. For example, if $L = \{ \x \in \R^n \suchthat \text{$\x$ is $s$-sparse and $\lnorm{\x} \leq 1$} \}$, we have $\effdim{L} \asymp s \log(2n/s) \ll n$ (see \cite[Lem.~2.3]{plan2013robust}).\footnote{Here, $\effdim{L} \asymp s \log(2n/s)$ means that there exist constants $C_1, C_2 > 0$ such that $C_1 s \log(2n/s) \leq \effdim{L} \leq C_2 s \log(2n/s)$.} This observation reflects the low-complexity nature of $s$-sparse vectors and it particularly indicates that the effective dimension of a set $L$ can be relatively small even though having full algebraic dimension.
In general, the mean width is robust against small perturbations, implying that slightly increasing $L$ will only slightly change $\meanwidth{L}$ (cf. \cite{plan2014highdim}).

Regarding signal estimation, we will conclude later on that the feasibility of recovery is actually established by this single quantity only: Very roughly stated, an accurate approximation of $\grtr$ is possible as long as the number of available measurements $m$ exceeds the effective dimension of our signal set $\sset$.

\subsection{Generalized Estimator}
\label{subsec:intro:estimator}

Next, we would like generalize the classical Lasso-estimator \eqref{eq:intro:lasso} in such a way that it becomes capable of fitting non-linear models in arbitrary signal sets $\sset$. Perhaps the most straightforward strategy here is to adapt the $\l{1}$-constraint, i.e.,
\begin{equation}
	\min_{\x \in \R^n} \tfrac{1}{2m}\sum_{i = 1}^m (\tilde{y}_i - \sp{\a_i}{\x})^2 \quad \text{subject to $\x \in \sset$.} \label{eq:intro:klasso}\tag{$P_{\sset}$}
\end{equation}
This optimization program is known as the \define{$\sset$-Lasso} or \define{generalized Lasso}. Plan and Vershynin have shown in \cite{plan2015lasso} that, under the model assumption of \ref{assump:model:nonlinmod}, a successful recovery of $\grtr$ by \eqref{eq:intro:klasso} can indeed be guaranteed, at least with high probability. Such an achievement might be a bit astonishing at first sight because one actually tries to ``fool'' the above $\sset$-Lasso by fitting a \emph{linear} rule $y_i^{\text{lin}} := \sp{\a_i}{\x}$ to \emph{non-linear} observations $\tilde{y}_i$.
The main idea behind these results is that---inspired by the noisy linear model of \eqref{eq:intro:linmodel}---the non-linearity is rather treated as \emph{noise} which disturbs a linear measurement process. 
Therefore, the following three parameters play a key role in \cite{plan2015lasso}:
\noeqref{eq:intro:modelparam:modeldev} \noeqref{eq:intro:modelparam:modeldeveta}%
\begin{subequations} \label{eq:intro:modelparam}
\begin{align} 
\scalfac &:= \mean[\fobs(\gaussianuniv) \cdot \gaussianuniv], \label{eq:intro:modelparam:scalfac} \\
\modeldev^2 &:= \mean[(\fobs(\gaussianuniv) - \scalfac\gaussianuniv)^2], \label{eq:intro:modelparam:modeldev} \\
\modeldeveta^2 &:= \mean[(\fobs(\gaussianuniv) - \scalfac\gaussianuniv)^2\cdot \gaussianuniv^2], \label{eq:intro:modelparam:modeldeveta}
\end{align}
\end{subequations}
where $\gaussianuniv \distributed \Normdistr{0}{1}$. Here, $\scalfac$ can be regarded as the \emph{correlation} between the linear and non-linear model, while $\modeldev^2$ and $\modeldeveta^2$ basically capture the variance between them. This intuition is also underpinned by the noisy linear case $y_i = f(\sp{\a_i}{\grtr}) := \tilde{\scalfac} \sp{\a_i}{\grtr} + \tilde{\modeldev} z_i$ with $z_i \distributed\Normdistr{0}{1}$ in which we simply obtain $\scalfac = \tilde{\scalfac}$ and $\modeldev^2 = \modeldeveta^2 = \tilde{\modeldev}^2$ from \eqref{eq:intro:modelparam}.

In practice however, the Lasso is often outperformed by approaches which are adapted to specific problem situations. For example, it would be more natural to apply \define{logistic regression} when dealing with binary measurements such as in \eqref{eq:intro:onebitmodel}. As a consequence, we shall replace the square loss of \eqref{eq:intro:klasso} by a general \define{loss function} 
\begin{equation}
\loss \colon \R \times \Y \to \R, \ (v,y) \mapsto \loss(v, y),
\end{equation}
allowing us to measure the \define{residual} between $v = \sp{\a_i}{\x}$ and $y = \tilde{y}_i$ in a different way.
This leads to the following \define{generalized estimator}:
\begin{equation}
	\min_{\x \in \R^n} \tfrac{1}{m} \sum_{i = 1}^m \loss(\sp{\a_i}{\x}, \tilde{y}_i) \quad \text{subject to $\x \in \sset$.} \label{eq:intro:estimator}\tag{$P_{\loss, \sset}$}
\end{equation}
Note that, when choosing the square loss $\losssq(v, y) := \tfrac{1}{2} (v-y)^2$, $v,y\in \R$ in \eqref{eq:intro:estimator}, we precisely end up with the $\sset$-Lasso \eqref{eq:intro:klasso}.\footnote{The reason for using a factor of $\tfrac{1}{2}$ is just for the sake of convenience, since we shall frequently work with the first derivative of $\loss$.} 
Sometimes, it will be beneficial to express the objective functional of \eqref{eq:intro:estimator} as a mapping of the signal:
\begin{equation}
	\lossemp[\yadv]\colon \R^n \to \R, \ \x \mapsto \lossemp[\yadv](\x) := \tfrac{1}{m} \sum_{i = 1}^m \loss(\sp{\a_i}{\x}, \tilde{y}_i).
\end{equation}
Usually, $\lossemp[\yadv]$ is referred to as the \define{empirical loss function}, since it tries to approximate the expected loss by computing an empirical mean value. But one should be aware of the fact that $\lossemp[\yadv]$ still depends on $\A$ and is therefore a random function. Nevertheless, this dependence will be omitted for the sake of readability.
\begin{remark}\label{rmk:intro:loss}
\begin{rmklist}
\item
	If $\sset$ and $\lossemp[\yadv]$ are both convex, the optimization problem $\eqref{eq:intro:estimator}$ becomes convex as well, and efficient solvers are often available in practice.
	We will not further discuss computational issues and the uniqueness of solutions here. For those aspects, the interested reader is referred to \cite{tibshirani2011lasso,tibshirani2013lasso,rosset2007piecewise,efron2004lars}.
	In fact, the proofs of our results reveal that the recovery guarantees hold for \emph{any} minimizer of \eqref{eq:intro:estimator}.
\item\label{rmk:intro:loss:riskmin}
	In the context of statistical learning, the generalized estimator \eqref{eq:intro:estimator} is usually associated with \define{structural risk minimization}. This basically means that we would like to minimize the \define{risk} (measured by means of $\loss(\sp{\a_i}{\x}, \tilde{y}_i)$) of ``wrongly'' predicting the output $\tilde{y}_i$ by $\sp{\a_i}{\x}$. But using this language seems to be a bit inappropriate in our setup, since we are actually estimating from \emph{non-linear} observations.\qeddiamond
\end{rmklist}
\end{remark}

Inspired by the successful analysis of the $\sset$-Lasso in \cite{plan2015lasso}, the following question arises:
\begin{highlight}
What general properties should a loss function $\loss$ satisfy so that the minimizer of \eqref{eq:intro:estimator} provides an accurate estimator of the ground-truth signal $\grtr$?
\end{highlight}
First, let us assume convexity and some mild regularity for $\loss$: 
\begin{properties}[3em]{L}
\item\label{assump:estimator:difflip}
	Let $\loss$ be continuously differentiable in the first variable; the derivative with respect to the first variable is then denoted by $\loss'(v,y) = \partder{\loss}{v}(v,y)$. Furthermore, assume that $\loss'$ is Lipschitz continuous in the second variable, i.e., there exists a constant $C_{\loss'} > 0$ such that
	\begin{equation}
		\abs{\loss'(v, y) - \loss'(v, \tilde{y})} \leq C_{\loss'} \abs{y - \tilde{y}} \quad \text{for all $v \in \R, \ y,\tilde{y} \in \Y$.}
	\end{equation}
\item\label{assump:estimator:convex}
	The loss function $\loss$ is convex in the first variable (which is equivalent to assume that $\loss'$ is non-decreasing).
\end{properties}
\begin{figure}
	\centering
	\begin{subfigure}[t]{0.45\textwidth}
		\centering
		\includegraphics[width=\textwidth]{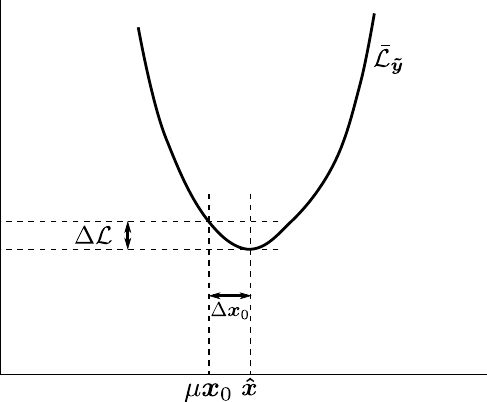}
		\caption{}
		\label{fig:intro:convexloss:nonflat}
	\end{subfigure}%
	\qquad
	\begin{subfigure}[t]{0.45\textwidth}
		\centering
		\includegraphics[width=\textwidth]{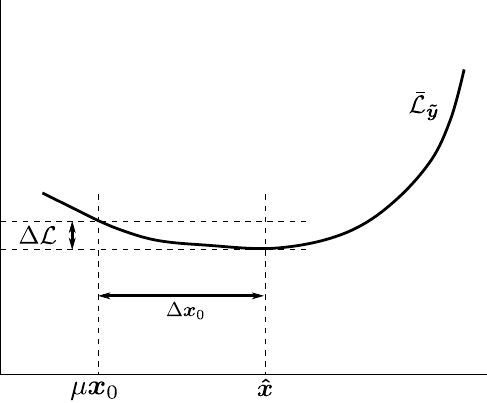}
		\caption{}
		\label{fig:intro:convexloss:flat}
	\end{subfigure}%
	\caption{\subref{fig:intro:convexloss:nonflat} $\lossemp[\yadv]$ is strongly curved so that small $\Delta\loss$ implies small $\Delta\grtr$. \subref{fig:intro:convexloss:flat} A less desirable situation where $\lossemp[\yadv]$ is relatively ``flat'' in the neighborhood of its minimizer.}
	\label{fig:intro:convexloss}
\end{figure}
As initial step towards successful signal estimation, one typically considers the empirical loss difference $\Delta\loss := \lossemp[\yadv](\grtrmu) - \lossemp[\yadv](\solu)$, where $\solu$ minimizes \eqref{eq:intro:estimator} and $\scalfac$ is a fixed scaling parameter defined similarly to \eqref{eq:intro:modelparam:scalfac}.\footnote{For now, we regard $\scalfac$ just as a fixed constant depending on the non-linear model \eqref{eq:intro:model} and the loss $\loss$. A precise definition will be given later in \eqref{eq:results:recovery:modelparam:scalfac}.} Under relatively mild conditions on $\loss$, one might show now that $\Delta\loss$ indeed converges to $0$ as $m \to \infty$.
This would imply the \define{(asymptotic) consistency} of the estimator \eqref{eq:intro:estimator} in the sense that it minimizes the risk of a wrong prediction (cf. Remark~\ref{rmk:intro:loss}\ref{rmk:intro:loss:riskmin}).
However, we are primarily interested in bounding the approximation error $\Delta\grtr := \lnorm{\solu - \grtrmu}$, which is of course a more difficult task. Our actual hope is that, supposed the empirical loss difference $\Delta\loss$ is small, the distance between $\solu$ and $\grtrmu$ is small as well.
From an analytical perspective, such a behavior depends on the \emph{curvature} of $\lossemp[\yadv]$; see also Figure~\ref{fig:intro:convexloss} for an illustration. 
In fact, it will turn out in the next subsection that \define{strict convexity} of $\loss$ (corresponding to strictly positive curvature) is already sufficient for establishing a bound for $\Delta\grtr$.

\subsection{A First Glimpse of Signal Recovery}

We are now ready to state a first recovery guarantee for $\grtr$. The following theorem, whose proof is given in Subsection~\ref{subsec:proof:introrecovery}, is an easy-to-read (though simplified) version of our main result presented in Section~\ref{sec:results}. 
Again, the model parameters $\scalfac$, $\modeldev$, and $\modeldeveta$ are adapted from \eqref{eq:intro:modelparam} and will be precisely defined in \eqref{eq:results:recovery:modelparam}.
\begin{theorem}\label{thm:intro:recovery}
Suppose that the assumptions \ref{assump:model:nonlinmod}, \ref{assump:model:advnoise}, \ref{assump:estimator:difflip}, \ref{assump:estimator:convex} hold with $\Covmatr = \I$. Let $\lnorm{\grtr} = 1$ and $\grtrmu \in \sset$ for a bounded, convex set $\sset \subset \R^n$. 
Moreover, let the loss function $\loss$ be twice continuously differentiable in the first variable with $\partder{\loss}{v}[2](v,y) \geq \mathcal{F}(v) > 0$ for all $(v,y) \in \R \times \Y$ and some continuous function $\mathcal{F}\colon \R \to \intvop{0}{\infty}$. Then, there exist a constant of the form $\modeldevconst = C \cdot \max\{1, \modeldev, \modeldeveta\} > 0$ such that the following holds with high probability:\footnote{The constant $C > 0$ may only depend on the ``probability of success'' as well as on the so-called RSC-constant of $\loss$ (see Definition~\ref{def:results:rsc}).} 
If the number of observations obeys
\begin{equation}\label{eq:intro:recovery:meascount}
m \geq C \cdot \meanwidth{\sset - \grtrmu}^2,
\end{equation}
then any minimizer $\solu$ of \eqref{eq:intro:estimator} satisfies
\begin{equation}\label{eq:intro:recovery:error}
	\lnorm{\solu - \grtrmu} \leq \modeldevconst \cdot \bigg( \Big(\frac{\meanwidth{\sset - \grtrmu}^2}{m}\Big)^{1/4} + \advdev \bigg).
\end{equation}
\end{theorem}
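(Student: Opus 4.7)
The plan is a deterministic optimality reduction combined with two high-probability ingredients: a restricted strong convexity (RSC) lower bound for $\lossemp[\yadv]$ and a uniform upper bound on the multiplier process generated by $\nabla\lossemp[\yadv](\grtrmu)$. Set $\h := \solu - \grtrmu$. Since $\grtrmu \in \sset$ and $\solu$ minimizes \eqref{eq:intro:estimator}, we have $\lossemp[\yadv](\solu) \leq \lossemp[\yadv](\grtrmu)$, which rewrites through the Bregman-type remainder
$$\losstaylor{\solu}{\grtrmu}{\yadv} = \lossemp[\yadv](\solu) - \lossemp[\yadv](\grtrmu) - \sp{\nabla\lossemp[\yadv](\grtrmu)}{\h}$$
as the key deterministic inequality $\losstaylor{\solu}{\grtrmu}{\yadv} \leq -\sp{\nabla\lossemp[\yadv](\grtrmu)}{\h}$. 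The remainder of the proof reduces to lower-bounding the left-hand side by $c_\RSC\lnorm{\h}^2$ and upper-bounding the right-hand side by a quantity governed by $\meanwidth{\sset-\grtrmu}$ and $\advdev$.

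For the RSC step, Taylor expansion in the first variable gives
$$\losstaylor{\solu}{\grtrmu}{\yadv} = \tfrac{1}{m}\sum_{i=1}^m \int_0^1 (1-t)\,\partder{\loss}{v}[2](\sp{\a_i}{\grtrmu}+t\sp{\a_i}{\h},\,\tilde{y}_i)\,\sp{\a_i}{\h}^2\,dt,$$
and the curvature hypothesis $\partder{\loss}{v}[2](v,y)\geq\mathcal{F}(v)>0$ dominates this pointwise by an $\A$-measurable quadratic form in $\h$ that, remarkably, does not depend on $\yadv$ at all. The target is a uniform lower bound $c_\RSC\lnorm{\h}^2$ over all $\h = \x-\grtrmu$ with $\x\in\sset$ and $\lnorm{\h}\geq \accuracy$, where $\accuracy$ will be of the order of the target rate $(\meanwidth{\sset-\grtrmu}^2/m)^{1/4}$. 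I would first localize to a compact interval on which $\mathcal{F}$ is bounded below (necessary because $\mathcal{F}$ is only strictly positive rather than uniformly so), and then invoke Mendelson's small ball method together with a peeling/stratification argument over spherical shells of $\sset-\grtrmu$, converting the pointwise curvature into a uniform empirical lower bound controlled by $\meanwidth{\sset-\grtrmu}^2/m$. This is the principal technical obstacle.

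For the multiplier process, split $\yadv = \y + (\yadv-\y)$. Choosing $\scalfac$ according to a first-order population identity $\mean[\loss'(\scalfac\gaussianuniv,\fobs(\gaussianuniv))\gaussianuniv] = 0$ makes $\nabla\lossemp[\y](\grtrmu)$ a centered empirical average; a Gaussian multiplier-process estimate adapted from \cite{plan2015lasso}, in which $\modeldev$ and $\modeldeveta$ control the relevant moments of the residual $\loss'(\sp{\a}{\grtrmu},y)$, then yields
$$\sup_{\vec{u}\in\sset-\grtrmu}\sp{-\nabla\lossemp[\y](\grtrmu)}{\vec{u}} \;\lesssim\; \max\{\modeldev,\modeldeveta\}\cdot\frac{\meanwidth{\sset-\grtrmu}}{\sqrt m}.$$
For the adversarial piece, Lipschitz continuity of $\loss'$ in the second variable together with Cauchy--Schwarz gives
$$\bigl|\sp{(\nabla\lossemp[\yadv]-\nabla\lossemp[\y])(\grtrmu)}{\h}\bigr| \leq C_{\loss'}\,\advdev\,\tfrac{1}{\sqrt m}\lnorm{\A\h},$$
and a standard uniform deviation bound on $\tfrac{1}{\sqrt m}\lnorm{\A\h}$ over $\h\in\sset-\grtrmu$ (again controlled by $\meanwidth{\sset-\grtrmu}$, and satisfied precisely under the measurement condition \eqref{eq:intro:recovery:meascount}) reduces this to $\lesssim \advdev\,\lnorm{\h}$.

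Intersecting the high-probability events from these three steps and assembling the bounds yields
$$c_\RSC\lnorm{\h}^2 \leq C\cdot\max\{1,\modeldev,\modeldeveta\}\cdot\frac{\meanwidth{\sset-\grtrmu}}{\sqrt m} + C\,\advdev\,\lnorm{\h}.$$
Treating separately the two regimes in which each right-hand term dominates (equivalently, completing the square in $\lnorm{\h}$) produces $\lnorm{\h} \leq \modeldevconst\bigl((\meanwidth{\sset-\grtrmu}^2/m)^{1/4} + \advdev\bigr)$, while the complementary case $\lnorm{\h} < \accuracy$ is automatically subsumed since $\accuracy$ is of the same order as this target rate. The hardest piece to get right is the RSC step: converting the only strictly-positive, non-uniform curvature $\mathcal{F}$ into a \emph{uniform} empirical lower bound on a high-dimensional convex set via the small ball method, while simultaneously carrying the adversarial corruption through to the final estimate.
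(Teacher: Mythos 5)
Your proposal is correct in substance and identifies all of the paper's key probabilistic ingredients --- the Bregman remainder, the population first-order condition defining $\scalfac$, the multiplier/residual bound controlled by $\modeldev$ and $\modeldeveta$, the Cauchy--Schwarz treatment of the adversarial term, and Mendelson's small ball method for restricted strong convexity. The paper, however, organizes things differently: Theorem~\ref{thm:intro:recovery} is proved in Subsection~\ref{subsec:proof:introrecovery} by a short reduction, picking the scale $t = C'\big((\meanwidth{\sset-\grtrmu}^2/m)^{1/4}+\advdev\big)$ and invoking the more refined Theorem~\ref{thm:results:aniso} (itself a combination of Theorem~\ref{thm:results:recovery} and Theorem~\ref{thm:results:rsc}); the bound $\effdim[t]{\sset-\grtrmu}\leq \meanwidth{\sset-\grtrmu}^2/t^2$ then produces the slow $m^{-1/4}$ rate from the fast $m^{-1/2}$ rate of the refined theorem. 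The main difference in your plan is the deterministic reduction at the end. You assemble a quadratic inequality $c_\RSC\lnorm{\h}^2\leq A + B\lnorm{\h}$ in $\h=\solu-\grtrmu$ and complete the square, which would require RSC to hold \emph{uniformly for all $\h$ with $\lnorm{\h}\geq\accuracy$} --- exactly why you (correctly) anticipate needing peeling over spherical shells. The paper avoids this entirely: it fixes a single scale $t$, proves RSC only on the sphere $\sset\intersec(tS^{n-1}+\grtrmu)$, shows the empirical score $\score(\x)=\lossemp[\yadv](\x)-\lossemp[\yadv](\grtrmu)$ is strictly positive on that sphere, and then argues by convexity of $\sset$ and of $\score$ that $\score(\solu)\leq 0$ and $\score(\grtrmu)=0$ force $\lnorm{\solu-\grtrmu}\leq t$. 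This single-scale convexity argument is what keeps the proof of Theorem~\ref{thm:results:rsc} clean --- no stratification is needed. Your route would work too (the star-shapedness of $\sset-\grtrmu$ gives monotonicity of $t\mapsto\effdim[t]{\sset-\grtrmu}$, so RSC at the smallest scale implies it at larger scales), but it adds an unnecessary layer that the paper's sphere-plus-convexity argument elegantly sidesteps.
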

Roughly speaking, the error bound of \eqref{eq:intro:recovery:error}, combined with condition \eqref{eq:intro:recovery:meascount}, states the following:
\begin{highlight}
	A highly accurate estimate of $\grtr$ can be achieved if the number of observations (greatly) exceeds the effective dimension of $\sset - \grtrmu$ and the adversarial noise is not too strong, i.e., $\advdev$ is small.
\end{highlight}
Hence, Theorem~\ref{thm:intro:recovery} is particularly appealing for those signal sets of small mean width because only a relatively few measurements are required in this case. In Section~\ref{sec:consequences}, we will discuss this observation in greater detail and consider several examples of low-complexity signal sets. Perhaps the most remarkable feature of Theorem~\ref{thm:intro:recovery} is that the asymptotic error rate is essentially controlled by a single quantity, namely the mean width. 
In fact, we allow for a fairly general setting here, involving non-linear perturbations, adversarial noise, and strictly convex loss functions. The impact of the specific loss function and observation scheme is actually hidden by the constants $\scalfac$ and $\modeldevconst$, which in principle do not affect the capability of recovering the signal.
For practical purposes, the values of these constants are however relevant: Intuitively, $\modeldevconst$ should be small if the loss function and the \mbox{(non-)}linear model ``play well together,'' e.g., if the logistic loss is used to fit a logistic regression model. We shall make this heuristic more precise later on, when analyzing the (adapted) model parameters $\scalfac$, $\modeldev$, and $\modeldeveta$.


Although Theorem~\ref{thm:intro:recovery} already contains many important features of our recovery framework, several generalizations will be presented in Section~\ref{sec:results}, which also provide a better understanding of \emph{why} the proposed reconstruction method works. In particular, we shall replace the mean width by a \emph{localized} version, improving the above ``slow error rate'' of $O(m^{-1/4})$, and the convexity condition on $\loss$ is further relaxed by the notion of \define{restricted strong convexity}. Moreover, it will turn out that the assumption of isotropic measurement vectors ($\Covmatr = \I$) can be essentially dropped in Theorem~\ref{thm:intro:recovery}.

\subsection{Contributions and Related Work}

A major contribution of this paper is to develop a clear and unified framework for signal estimation in high dimensions, covering various problem scenarios from \emph{compressed sensing}, \emph{signal processing}, and \emph{statistical learning}.
One substantial conclusion from our results is that even standard estimators are surprisingly robust against model inaccuracies. Indeed, \eqref{eq:intro:estimator} is not based on any additional knowledge of the noisy (non-)linear distortion $\fobs$.
This particularly explains why it might be still promising to apply classical methods, such as the Lasso, although the true observation rule is largely unknown.
On the other hand, we need to ``pay the price'' that all of these uncertainties explicitly appear in the error bound.
Compared to many other approaches in machine learning, this is certainly a rather extreme perspective but it comes along with the great benefit that the actual estimator remains completely unaffected.

As already mentioned above, our main inspiration was the work of Plan and Vershynin in \cite{plan2015lasso}. Assuming the same output model \ref{assump:model:nonlinmod}, they have proven a similar reconstruction guarantee as in Theorem~\ref{thm:intro:recovery}, but only for the special case of the $\sset$-Lasso.
These results are primarily related to signal approximation and recovery, but the underlying proof techniques also rely on classical concepts from \define{geometric functional analysis} and \define{convex geometry}, such as Gaussian mean width or Gordon's escape.
For a brief discussion of these connections, the interested reader is referred to \cite[Sec.~II]{plan2015lasso} and \cite[Sec.~1.4]{plan2014highdim} as well as the references therein.
Furthermore, it should be emphasized that, in contrast to \cite{plan2015lasso}, our observation scheme additionally permits adversarial noise. This extension is especially relevant to real-world applications for which the assumption of \emph{independent} noise is too restrictive.

Another important achievement of this work is to go beyond the classical square loss towards general convex loss functions. In high-dimensional statistics, such a setup was first theoretically studied by the works of \cite{negahban2009unified,negahban2012unified}, which have isolated the crucial property of \define{restricted strong convexity} (\define{RSC}). The authors of \cite{negahban2009unified,negahban2012unified} focus on \define{$M$-estimators} with \define{decomposable regularizers}, instead of arbitrary convex signal sets. This setting is also quite general, and although our perspective is somewhat different here, we believe that similar results than ours could be deduced from their approach.

Inspired by \cite{negahban2009unified,negahban2012unified}, an adapted version of RSC is introduced in Section~\ref{sec:results} (see Definition~\ref{def:results:rsc}). In order to prove that our notion of RSC holds for a fairly large class of loss functions, we will apply \define{Mendelson's small ball method}, which is a powerful framework developed in a recent line of papers, including \cite{lecue2013learning,koltchinskii2015smallball,lecue2014cs,mendelson2014diameter,mendelson2014learning,mendelson2014learninggeneral}. A close connection to our results can be found in \cite{mendelson2014learning,mendelson2014learninggeneral}, where Mendelson presents a novel approach to learning and estimation problems that does not rely on traditional \emph{concentration inequalities}. 
Moreover, there is a recent work by Tropp \cite{tropp2014convex} which makes use of Mendelson's ideas to analyze convex programming methods for structured signal approximation.

Finally, we would like to mention another interesting line of research by Thrampoulidis, Oymak, Hassibi, and collaborators \cite{oymak2013lasso,oymak2013simple,thrampoulidis2014simple,thrampoulidis2015lasso}, providing \emph{exact} error bounds for the generalized Lasso with non-linear observations but rather in an asymptotic regime.

\begin{remark}
Although we mainly focus on the perspective of \emph{signal recovery} and \emph{compressed sensing} in this work, our problem setup is immediately related to issues from \emph{machine learning} and \emph{econometrics}. In fact, the model assumption of \ref{assump:model:nonlinmod} (with appropriate choices of $\fobs$) can be easily translated into (linear) \emph{regression}, \emph{classification}, \emph{generalized linear models}, etc. In this context, the vectors $\a_1, \dots, \a_m$ typically contain ``data'' describing a certain collection of \emph{feature variables}. Roughly speaking, our goal is then to \emph{learn} a \emph{parameter vector} $\grtr$ which allows us to predict the \emph{output variables} $y_1, \dots, y_m$ by means of \eqref{eq:intro:model}. A nice discussion about this correspondence can be found in \cite[Sec.~6]{plan2014highdim}. \qeddiamond
\end{remark}

\subsection{Outline and Notation}

In Section~\ref{sec:results}, we shall develop the ideas of the introduction further. For this purpose, the notions of \define{local mean width} and \define{restricted strong convexity} (\define{RSC}) are introduced, giving rise to our main recovery guarantee (Theorem~\ref{thm:results:recovery}). The next main result (Theorem~\ref{thm:results:rsc}) is then presented in Subsection~\ref{subsec:results:rsc}, stating a general sufficient condition for RSC (including the case of strictly convex loss functions). Finally, in Subsection~\ref{subsec:results:aniso}, we will show how the assumption of isotropic measurements can be dropped.
Note that all technical proofs are postponed to Section~\ref{sec:proof}, which also contains some remarks on the applied techniques.
Section~\ref{sec:consequences} is then devoted to several consequences and examples of our theoretical outcomes. 
This particularly includes common choices of signal sets, non-linearities, and loss functions. Apart from that, we will briefly discuss the underlying convex geometry of our problem setting in Subsection~\ref{subsec:consequences:convexgeo}.
Finally, some possible extensions of our framework are pointed out in Section~\ref{sec:conclusion}, which could be considered in future work.

Throughout this paper, we will make use of several (standard) notations and conventions that are summarized in the following list:
\begin{listing}
\item
	For an integer $k \in \N$, we put $[k] := \{1, \dots, k\}$.
\item
	Let $\x = (x_1, \dots, x_n) \in \R^n$ be a vector. The \define{support} of $\x$ is defined by the set of its non-zero components $\supp(\x) := \{ i \in [n] \suchthat x_i \neq 0 \}$ and we put $\lnorm{\x}[0] := \cardinality{\supp(\x)}$. For $1 \leq p \leq \infty$, the \define{$\l{p}$-norm} is given by
	\begin{equation}
	\lnorm{\x}[p] := \begin{cases} (\sum_{i = 1}^n \abs{x_i}^p)^{1/p}, & p < \infty, \\ \max_{i \in [n]} \abs{x_i}, & p = \infty.
	\end{cases}
	\end{equation}
	The associated \define{unit ball} is denoted by $\ball[p][n] := \{ \x \in \R^n \suchthat \lnorm{\x}[p] \leq 1 \}$ and the \define{(Euclidean) unit sphere} is $S^{n-1} := \{ \x \in \R^n \suchthat \lnorm{\x} = 1 \}$. The \define{operator norm} of a matrix $\vec{M} \in \R^{n'\times n}$ is defined as $\norm{\vec{M}} := \sup_{\x \in S^{n-1}} \lnorm{\vec{M}\x}$.
\item
	For a subset $A \subset \R^n$, we define the \define{step function} (or \define{characteristic function}) 
	\begin{equation}
	\indset{A}(\x) := \begin{cases} 1, & \x \in A, \\ 0, & \text{otherwise},
	\end{cases}\quad \x \in \R^n.
	\end{equation}
\item
	The \define{expected value} of a random variable $Z$ is denoted by $\mean[Z]$. 
	Similarly, the \define{probability} of an event $A$ is denoted by $\prob[A]$ and for the corresponding \define{indicator function}, we write $\indprob{A}$. 
\item
	The letter $C$ is always reserved for a constant, and if necessary, an explicit dependence on a certain parameter is emphasized by a subscript.
	Moreover, we refer to $C$ as a \define{numerical} constant (or \define{universal} constant) if its value is independent from all involved parameters in the current setup.
	In this case, we sometimes simply write $A \lesssim B$ instead of $A \leq C \cdot B$, and if $C_1 \cdot A \leq B \leq C_2 \cdot A$ for numerical constants $C_1, C_2 > 0$, we use the abbreviation $A \asymp B$.
\item
	The phrase ``with high probability'' means that an event arises at least with a fixed (and high) \emph{probability of success}, for instance, 99\%. Alternatively, one could regard this probability as an additional input parameter which would then also appear as a factor somewhere in the statement. But we will usually omit this explicit quantification for the sake of convenience.
\end{listing}

\section{Main Results}
\label{sec:results}

\subsection{General Signal Sets and Local Mean Width}
\label{subsec:results:signal}

In the following subsections, we aim to establish a more refined versions of our initial recovery result Theorem~\ref{thm:intro:recovery}.
For this purpose, let us first recall the error bound \eqref{eq:intro:recovery:error} of Theorem~\ref{thm:intro:recovery}. There exist some situations in which the decay rate of $O(m^{-1/4})$ is in fact non-optimal (see \cite[Sec.~4]{plan2014highdim}).
The actual reason for this drawback is the inability of the \emph{global} mean width to capture the ``local structure'' of $\sset$ around the signal-of-interest $\grtrmu$ (see also Subsection~\ref{subsec:consequences:convexgeo}).
Fortunately, it has turned out in \cite{plan2014highdim,plan2015lasso} that this issue can be resolved by the following \define{localized} version of the mean width:
\begin{definition}
The \define{local mean width} of a set $L \subset \R^n$ at scale $t \geq 0$ is defined by
\begin{equation}\label{eq:results:signal:localmeanwidth}
	\meanwidth[t]{L} := \meanwidth{L \intersec t \ball[2][n]} = \mean[\sup_{\x \in L \intersec t \ball[2][n]} \sp{\gaussian}{\x}],
\end{equation}
where $\gaussian \distributed \Normdistr{\vec{0}}{\I}$ is a standard Gaussian random vector. Moreover, we set $\effdim[t]{L} := \meanwidth[t]{L}^2 / t^2$ for $t > 0$, which is called the \define{(local) effective dimension} of $L$ at scale $t$.
\end{definition}
If $L = \sset - \grtrmu$, the supremum in \eqref{eq:results:signal:localmeanwidth} is only taken over a (small) ball of radius $t$ around $\grtrmu$. Thus, $\meanwidth[t]{\sset - \grtrmu}$ indeed measures the complexity of $\sset - \grtrmu$ in a local manner. The choice of the scaling parameter $t$ will become crucial in Theorem~\ref{thm:results:recovery} because it essentially serves as the desired \define{approximation accuracy}, in the sense that the error distance $\lnorm{\solu - \grtrmu}$ is eventually bounded by $t$. Finally, we would like to emphasize that the local mean width can be always bounded by its global counterpart:
\begin{equation}\label{eq:results:signal:localglobal}
\meanwidth[t]{L} = \mean[\sup_{\x \in L \intersec t \ball[2][n]} \sp{\gaussian}{\x}] \leq \mean[\sup_{\x \in L} \sp{\gaussian}{\x}] = \meanwidth{L}, \quad t \geq 0.
\end{equation}

\subsection{A Recovery Guarantee Based on Restricted Strong Convexity}
\label{subsec:results:recovery}

So far, we have just assumed differentiability and convexity for $\loss$ in \ref{assump:estimator:difflip} and \ref{assump:estimator:convex}, respectively.
In order to impose further conditions on $\loss$, we first need to adapt the \define{model estimation parameters} $\scalfac$, $\modeldev$, $\modeldeveta$ from \eqref{eq:intro:modelparam} to the setup of general loss functions. This is done in the following way, where $\scalfac$ is defined as the solution of \eqref{eq:results:recovery:modelparam:scalfac}:
\begin{subequations} \label{eq:results:recovery:modelparam}
\begin{align} 
	0 = {} & \mean[\loss'(\scalfac \gaussianuniv, \fobs(\gaussianuniv)) \cdot \gaussianuniv], \label{eq:results:recovery:modelparam:scalfac} \\
	\modeldev^2 := {} & \mean[\loss'(\scalfac \gaussianuniv, \fobs(\gaussianuniv))^2], \label{eq:results:recovery:modelparam:modeldev} \\
	\modeldeveta^2 := {} & \mean[\loss'(\scalfac \gaussianuniv, \fobs(\gaussianuniv))^2 \cdot \gaussianuniv^2], \label{eq:results:recovery:modelparam:modeldeveta}
\end{align}
\end{subequations}
with $g \distributed \Normdistr{0}{1}$. Since $\scalfac$ is only implicitly given here, it is not even clear if the equation \eqref{eq:results:recovery:modelparam:scalfac} has a solution. Up to now, we can merely state that, if existent, $\scalfac$ is uniquely determined.\footnote{This is a consequence of \ref{assump:estimator:convex}. More precisely, the uniqueness of $\scalfac$ follows from writing \eqref{eq:results:recovery:modelparam:scalfac} as an integral and using the fact that $\loss'$ is non-decreasing in the first variable.}
Therefore, the reader should be aware of the following: Defining $\scalfac$ by \eqref{eq:results:recovery:modelparam:scalfac} particularly means that we \emph{postulate}  the solvability of this equation.
There are in fact ``incompatible'' pairs of loss functions and non-linearities for which $\scalfac$ does not exist, and our results are of course not applicable anymore. A typical example is given in Subsection~\ref{subsec:consequences:loss}.
Interestingly, such an issue cannot arise for the square loss functional $\losssq$, since (not very surprisingly) the definition \eqref{eq:results:recovery:modelparam} exactly coincides with \eqref{eq:intro:modelparam} in this case. 
For that reason, the purpose of the adapted model parameters is still to quantify the mismatch between the non-linear observations $y_i = \fobs(\sp{\a_i}{\grtr})$ and their noiseless linear counterpart $y_i^{\text{lin}} := \sp{\a_i}{\grtr}$.

Let us now return to the fundamental question of when a loss function is suited for signal recovery by \eqref{eq:intro:estimator}.
As already sketched in Subsection~\ref{subsec:intro:estimator}, an accurate reconstruction should be possible if the empirical loss $\lossemp[\yadv]$ is not ``too flat,'' meaning that its \emph{curvature} needs to be sufficiently large around $\grtrmu$ (see again Figure~\ref{fig:intro:convexloss}).
A common way to achieve such a nice behavior is to assume \define{strong convexity} for $\lossemp[\yadv]$. In order to give a precise definition, we introduce the linear (Taylor) approximation error of $\lossemp[\yadv]$ at $\grtrmu$:
\begin{equation}\label{eq:results:recovery:taylor}
	\losstaylor{\x}{\grtrmu}{\yadv} := \lossemp[\yadv](\x) - \lossemp[\yadv](\grtrmu) - \sp{\gradient\lossemp[\yadv](\grtrmu)}{\x - \grtrmu}, \quad \x \in \R^n,
\end{equation}
where $\gradient\lossemp[\yadv]\colon \R^n \to \R^n$ denotes the gradient of $\lossemp[\yadv]$. The empirical loss is called \define{strongly convex} (with respect to $\grtrmu$) if $\losstaylor{\x}{\grtrmu}{\yadv}$ can be bounded from below by $C \lnorm{\x - \grtrmu}^2$ for all $\x \in \R^n$ and an appropriate constant $C > 0$.
But in general, such a condition cannot be satisfied as long as $m < n$. To see this, just consider the square loss $\losssq$ and observe that (after a simple calculation) $\losssqtaylor{\x}{\grtrmu}{\yadv} = \tfrac{1}{2m} \lnorm{\A(\x - \grtrmu)}^2$. If $m < n$, the matrix $\A \in \R^{m \times n}$ has a non-trivial kernel and we have $\losssqtaylor{\x}{\grtrmu}{\yadv} = 0$ for some $\x \neq \grtrmu$.

But fortunately, the estimator \eqref{eq:intro:estimator} does only take account of a fixed subset $\sset$ of $\R^n$. 
Hence, it is actually enough to have \define{restricted strong convexity}, i.e., $\losstaylor{\x}{\grtrmu}{\yadv} \geq C \lnorm{\x - \grtrmu}^2$ only needs to hold for all $\x$ in (a subset of) $\sset$. This simple idea leads us to the following important relaxation of strong convexity, which is adapted from \cite{negahban2009unified,negahban2012unified}:
\begin{definition}\label{def:results:rsc}
The empirical loss function $\lossemp[\yadv]$ satisfies \define{restricted strong convexity} (\define{RSC}) at scale $t \geq 0$ (with respect to $\grtrmu$ and $\sset$) if there exists a constant $C > 0$ such that
\begin{equation}\label{eq:results:rsc}
	\losstaylor{\x}{\grtrmu}{\yadv} \geq C \lnorm{\x - \grtrmu}^2 \quad \text{for all $\x \in K \intersec (t \S^{n-1} + \grtrmu)$.}
\end{equation}
Then, we usually call $C$ the \define{RSC-constant} of $\loss$.
\end{definition}
Geometrically, \eqref{eq:results:rsc} states that strong convexity holds for those vectors that belong to a (small) sphere of radius $t$ around $\grtrmu$.
An interesting special case of Definition~\ref{def:results:rsc} is again obtained for $\loss = \losssq$: Here, we have $\losssqtaylor{\x}{\grtrmu}{\yadv} = \tfrac{1}{2m} \lnorm{\A(\x - \grtrmu)}^2$ and \eqref{eq:results:rsc} simply corresponds to the fact that the \define{restricted minimum singular value}\footnote{The word ``restricted'' is again referred to the assumption that $\x - \grtrmu \in (K - \grtrmu) \intersec t \S^{n-1}$.} of $\tfrac{1}{m}\A$ is bounded from below by a positive constant. 
This property is precisely reflected by \define{Gordon's escape lemma}, which has formed a key ingredient in the proofs of recovery results with Lasso-type estimators; see \cite[Thm.~4.2, Lem.~4.4]{plan2015lasso} and \cite[Thm.~3.2]{chandrasekaran2012geometry} for example. 
The following refinement of Theorem~\ref{thm:intro:recovery} shows that the concept of RSC indeed allows us to incorporate general loss functions: 
\begin{theorem}\label{thm:results:recovery}
Suppose that the assumptions \ref{assump:model:nonlinmod}, \ref{assump:model:advnoise}, \ref{assump:estimator:difflip}, \ref{assump:estimator:convex} hold with $\Covmatr = \I$, and that $\scalfac$, $\modeldev$, $\modeldeveta$ are defined according to \eqref{eq:results:recovery:modelparam}. Moreover, let $\lnorm{\grtr} = 1$ and $\grtrmu \in \sset$ for a convex set $\sset \subset \R^n$.
For a fixed number $t > 0$, we assume that the empirical loss function $\lossemp[\yadv]$ satisfies RSC at scale $t$ (with respect to $\grtrmu$ and $\sset$) and that 
\begin{equation}\label{eq:results:recovery:measurements}
m \geq \effdim[t]{\sset - \grtrmu}.
\end{equation}
Then, there exists a constant $C > 0$ such that the following holds with high probability:\footnote{More precisely, the constant $C$ only depends on (the RSC-constant of) $\loss$ and the probability of success.} If
\begin{equation}\label{eq:results:recovery:accuracy}
	t > C\cdot \accuracy := C\cdot \Big( \frac{\modeldev \cdot \sqrt{\effdim[t]{\sset - \grtrmu}} + \modeldeveta}{\sqrt{m}} + \advdev \Big),
\end{equation}
any minimizer $\solu$ of \eqref{eq:intro:estimator} satisfies $\lnorm{\solu - \grtrmu} \leq t$.
\end{theorem}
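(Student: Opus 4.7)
The plan is to argue by contradiction. Suppose that some minimizer $\solu$ of \eqref{eq:intro:estimator} satisfied $\lnorm{\solu - \grtrmu} > t$. Since $\sset$ is convex and contains both $\grtrmu$ and $\solu$, the point $\x_{*} := \grtrmu + t(\solu - \grtrmu)/\lnorm{\solu - \grtrmu}$ lies in $\sset \intersec (t\S^{n-1} + \grtrmu)$, and by convexity of $\lossemp[\yadv]$ together with the optimality of $\solu$ one would have $\lossemp[\yadv](\x_*) \leq \lossemp[\yadv](\grtrmu)$. Rearranging \eqref{eq:results:recovery:taylor} gives $\losstaylor{\x_*}{\grtrmu}{\yadv} \leq -\sp{\gradient \lossemp[\yadv](\grtrmu)}{\x_* - \grtrmu}$. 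Applying RSC at scale $t$ to the left-hand side yields $Ct^2 \leq -\sp{\gradient\lossemp[\yadv](\grtrmu)}{\x_*-\grtrmu}$, so the entire theorem reduces to an upper bound, valid with high probability under \eqref{eq:results:recovery:measurements}, on the ``multiplier process''
\begin{equation*}
M_t := \sup_{\h \in (\sset - \grtrmu) \intersec t\S^{n-1}} -\sp{\gradient \lossemp[\yadv](\grtrmu)}{\h}.
\end{equation*}

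To bound $M_t$, I would analyze $\gradient \lossemp[\yadv](\grtrmu) = \tfrac{1}{m}\sum_i \loss'(\scalfac \sp{\a_i}{\grtr}, \tilde y_i)\a_i$ in stages. Write $g_i := \sp{\a_i}{\grtr} \distributed \Normdistr{0}{1}$ and set $\xi_i := \loss'(\scalfac g_i, y_i)$ (the ideal residual). Assumption \ref{assump:estimator:difflip} yields $|\loss'(\scalfac g_i, \tilde y_i) - \xi_i| \leq C_{\loss'}|\tilde y_i - y_i|$, and Cauchy--Schwarz bounds the \emph{adversarial} contribution uniformly in $\h$ by $C_{\loss'} \advdev \cdot (\tfrac{1}{m}\sum\sp{\a_i}{\h}^2)^{1/2} \lesssim C_{\loss'}\advdev\, t$, provided the standard isotropic concentration $\tfrac{1}{m}\sum\sp{\a_i}{\h}^2 \asymp t^2$ over $\h \in (\sset-\grtrmu)\intersec t\ball[2][n]$ holds---which is guaranteed by the measurement count \eqref{eq:results:recovery:measurements}. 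For the remaining ``clean'' sum $\tfrac{1}{m}\sum \xi_i\a_i$, I exploit $\Covmatr = \I$ and $\lnorm{\grtr} = 1$ to decompose $\a_i = g_i\grtr + \tilde\a_i$ with $\tilde\a_i \distributed \Normdistr{\vec 0}{\I - \grtr\grtr^\T}$ independent of $g_i$, splitting the relevant inner product into $\sp{\grtr}{\h} \cdot \tfrac{1}{m}\sum_i \xi_i g_i + \tfrac{1}{m}\sum_i \xi_i \sp{\tilde\a_i}{\h}$. The first summand has mean zero by the defining equation \eqref{eq:results:recovery:modelparam:scalfac} of $\scalfac$ and variance of order $\modeldeveta^2/m$ by \eqref{eq:results:recovery:modelparam:modeldeveta}; combined with $|\sp{\grtr}{\h}| \leq t$ this produces a contribution of order $\modeldeveta t/\sqrt m$. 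The second summand is, conditionally on $(g_i, y_i)_i$, a centered Gaussian process in $\h$; a Slepian/Gaussian-process supremum bound combined with the concentration $\tfrac{1}{m}\sum_i \xi_i^2 \approx \modeldev^2$ from \eqref{eq:results:recovery:modelparam:modeldev} and the inclusion $t\S^{n-1} \subset t\ball[2][n]$ yields a contribution of order $\modeldev \meanwidth[t]{\sset - \grtrmu}/\sqrt m$.

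Collecting the three terms, with high probability
\begin{equation*}
Ct^2 \leq M_t \lesssim \frac{\modeldev \meanwidth[t]{\sset - \grtrmu}}{\sqrt m} + \frac{\modeldeveta\, t}{\sqrt m} + \advdev\, t.
\end{equation*}
Dividing by $t$ and using $\meanwidth[t]{\sset - \grtrmu} = t\sqrt{\effdim[t]{\sset - \grtrmu}}$ produces $Ct \lesssim \accuracy$, contradicting $t > C \accuracy$ in \eqref{eq:results:recovery:accuracy} and establishing $\lnorm{\solu - \grtrmu} \leq t$. The main technical obstacle is the last, multiplier-type summand $\tfrac{1}{m}\sum_i \xi_i \sp{\tilde\a_i}{\h}$: the coefficients $\xi_i = \loss'(\scalfac g_i, \fobs(g_i))$ are random and, under only \ref{assump:estimator:difflip}--\ref{assump:estimator:convex}, not a priori bounded, so a naive Gaussian-process argument does not apply. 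The workflow is to condition on $(g_i, y_i)_i$ to render $\tilde\a_i$ independent of the $\xi_i$, bound the conditional Gaussian supremum in terms of $(\tfrac{1}{m^2}\sum\xi_i^2)^{1/2}$ and $\meanwidth[t]{\sset - \grtrmu}$, and then integrate out the $\xi_i$ using their concentration around $\modeldev^2$; the hypothesis $m \geq \effdim[t]{\sset - \grtrmu}$ is used precisely to keep the Gaussian-process deviation and the isotropy deviation of $\tfrac{1}{m}\sum\sp{\a_i}{\h}^2$ below the target order $t^2$.
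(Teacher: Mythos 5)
Your argument is essentially the paper's own proof, repackaged. The paper (Subsection~5.2) also starts from the first-order Taylor remainder $\losstaylor{\x}{\grtrmu}{\yadv}$, introduces the ``residual'' vectors $\z$ (clean) and $\zadv$ (adversarial) with entries $\loss'(\sp{\a_i}{\grtrmu}, y_i)$ and $\loss'(\sp{\a_i}{\grtrmu}, \tilde y_i)$, splits the gradient contribution into a clean term $T_1 = \tfrac{1}{m}\sp{\A^\T\z}{\grtrmu-\x}$ and an adversarial term $T_2 = \sp{\z-\zadv}{\tfrac{1}{m}\A(\x-\grtrmu)}$, and closes with a convexity argument showing $\lnorm{\solu-\grtrmu}\le t$ (the paper phrases this by showing the ``score'' $\lossemp[\yadv](\x)-\lossemp[\yadv](\grtrmu)$ is strictly positive on the $t$-sphere, rather than by contradiction, but the two are equivalent). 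Your Cauchy--Schwarz bound on the adversarial piece plus the Liaw--Mehrabian--Plan--Vershynin concentration $\tfrac{1}{m}\sum\sp{\a_i}{\h}^2 \lesssim t^2$ on $(\sset-\grtrmu)\intersec t\ball[2][n]$ is precisely the paper's Lemma~5.4 applied inside the bound on $T_2$, and you correctly note that \eqref{eq:results:recovery:measurements} is what lets $\meanwidth[t]{\sset-\grtrmu}/\sqrt m$ be absorbed into $t$. For the clean term, your decomposition $\a_i = g_i\grtr + \tilde\a_i$ with $\tilde\a_i\distributed\Normdistr{\vec 0}{\I-\grtr\grtr^\T}$ independent of $g_i$ (hence of $\xi_i$) is exactly the $\proj/\orthcompl\proj$ decoupling in the paper's Lemma~5.3; the paper formalizes the subsequent Gaussian-supremum step by inserting an independent copy $\Aind$ of $\A$ and applying Jensen, whereas you condition on $(g_i,y_i)_i$ and invoke a Gaussian process bound with Slepian to absorb the rank-one defect of the covariance---both give $\mean[(\sum\xi_i^2)^{1/2}]\meanwidth{L}/m \le \modeldev\meanwidth[t]{\sset-\grtrmu}/\sqrt m$. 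The other piece $\sp{\grtr}{\h}\cdot\tfrac{1}{m}\sum\xi_i g_i$ is, as you say, mean zero by the definition of $\scalfac$ in \eqref{eq:results:recovery:modelparam:scalfac} with second moment $\modeldeveta^2/m$, which is the paper's $T_2'$ term. One minor remark: where you write ``concentration $\tfrac{1}{m}\sum\xi_i^2\approx\modeldev^2$,'' no genuine concentration is needed---Jensen/Cauchy--Schwarz $\mean\lnorm{\z}\le\sqrt{\mean\lnorm{\z}^2}$ followed by a single Markov inequality (as in the paper) suffices, which keeps the probabilistic bookkeeping lighter and avoids any unnecessary higher-moment assumptions on $\loss'$.
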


This result looks slightly more technical than Theorem~\ref{thm:intro:recovery}. 
Hence, it is very helpful to take the following perspective when interpreting the statement of Theorem~\ref{thm:results:recovery}: Suppose that RSC and \eqref{eq:results:recovery:measurements} are satisfied for a fixed (small) accuracy $t > 0$. 
Then, in order to achieve $\lnorm{\solu - \grtrmu} \leq t$ (with high probability), one simply needs to ensure that $t > C \cdot \accuracy$. 
But the size of $\accuracy$ can be easily controlled by the sample count $m$ as well as by the adversarial noise parameter $\advdev$ (cf. \eqref{eq:intro:advnoise}). Thus, one might adjust $m$ such that $t \approx C \cdot \accuracy$ holds---this actually corresponds to the \emph{minimal} number of required observations to invoke Theorem~\ref{thm:results:recovery}.
In that case, we obtain
\begin{equation}
\lnorm{\solu - \grtrmu} \leq t \approx C \cdot \accuracy = C\cdot \Big( \frac{\modeldev \cdot \sqrt{\effdim[t]{\sset - \grtrmu}} + \modeldeveta}{\sqrt{m}} + \advdev \Big),
\end{equation}
which resembles the formulation of Theorem~\ref{thm:intro:recovery}. Nevertheless, one should be aware of the fact that the desired accuracy is only implicitly coupled with $m$ here, since the scale $t$ has been fixed in advance.
Apart from that analytical description, there is also a nice geometric interpretation of Theorem~\ref{thm:results:recovery} which is presented in Subsection~\ref{subsec:consequences:convexgeo}. The related discussion will particularly illustrate how the local effective dimension $\effdim[t]{\cdot}$ varies as a function of $t$.

Let us now analyze the parameter $\accuracy$, which certainly plays the role of a lower bound for the achievable error accuracy. As already seen before, the adversarial noise is incorporated by an ``inevitable'' term $\advdev$. 
On the other hand, the structure of the signal set is captured by the local mean width $\meanwidth[t]{\sset - \grtrmu}$, whereas the model parameters $\scalfac$, $\modeldev$, $\modeldeveta$ reflect our uncertainty about the true output rule.
This observation is remarkable because it indicates that all components of our framework can be essentially handled by separate quantities, and even more importantly, the asymptotic error rate $O(m^{-1/2})$ is not affected by them. Consequently, the capability of recovering signals does, at least asymptotically, neither depend on the (noisy) non-linearity $\fobs$ nor on the specific choice of loss.

However, when studying the precise quantitative behavior of the above error estimate, the impact of $\scalfac$, $\modeldev$, and $\modeldeveta$ becomes much more significant. 
On the one hand, we aim for small values of $\modeldev$ and $\modeldeveta$, which quantify the ``variance'' of the model mismatch. 
And on the other hand, a bad scaling of $\grtrmu$, that is, $\abs{\scalfac} \approx 0$, should be avoided because any bound for $\lnorm{\solu - \grtrmu}$ would be almost meaningless otherwise.
If $\loss = \losssq$, there is also an easy statistical explanation for this drawback: In this situation, $\scalfac = \mean[\fobs(\gaussianuniv) \cdot \gaussianuniv]$ essentially measures the \emph{correlation} between the non-linear and linear model. Therefore, $\abs{\scalfac} \approx 0$ would imply that $y_i = \fobs(\sp{\a_i}{\grtr})$ and $y_i^{\text{lin}} = \sp{\a_i}{\grtr}$ are almost uncorrelated, so that there is no hope for recovering $\grtr$ anyway.

In general, we can now make precise our initial heuristic that the used loss function and the observation model should ``play well together'':
A preferable setup for Theorem~\ref{thm:results:recovery} is given when both $\modeldev$ and $\modeldeveta$ are relatively small compared to $\abs{\scalfac}$, i.e., $\abs{\scalfac} / \max\{\modeldev, \modeldeveta\} \gg 1$. 
Interestingly, we shall see in the course of Example~\ref{ex:consequences:model}\ref{ex:consequences:model:linear} that the latter quotient can be also regarded as the \define{signal-to-noise ratio} of the output scheme \ref{assump:model:nonlinmod}. 

\begin{remark}
\begin{rmklist}
\item
	Although not stated explicitly, the proof of Theorem~\ref{thm:results:recovery} already implies that there exists a minimizer of \eqref{eq:intro:estimator}. More specifically, this follows from the assumption that the equations in \eqref{eq:results:recovery:modelparam} are well-defined and $\scalfac$ actually exists.
\item
	The unit-norm assumption on the ground-truth signal $\grtr$ cannot be dropped in general. For example, one might consider a binary observation model $y_i = \sign(\sp{\a_i}{\grtr})$. Then, we could multiply $\grtr$ by any positive scalar without changing the value of $y_i$. 
	Thus, there is no chance to recover the magnitude of $\grtr$ from $\y$ and $\A$, which in turn makes a certain normalization necessary. 
	In the case of a (noisy) linear model, however, one could easily avoid such an additional constraint by appropriately scaling the signals.
	
	Furthermore, the non-linearity $\fobs$ in \ref{assump:model:nonlinmod} might involve some rescaling of the linear projection $\sp{\a_i}{\grtr}$. This effect is precisely captured by the parameter $\scalfac$, which particularly explains the condition $\grtrmu \in \sset$ in Theorem~\ref{thm:intro:recovery} and Theorem~\ref{thm:results:recovery} (instead of $\grtr \in \sset$). Equivalently, one can also assume that $\grtr \in \scalfac^{-1} \sset$, corresponding to a dilation of the signal set $\sset$.
	But even though this is just a constant factor, such an issue might become very relevant in practice, as $\scalfac$ could be unknown.
	Due to $\lnorm{\grtr} = 1$, it is sometimes necessary to enlarge\footnote{By ``enlarging,'' we simply mean that $\sset$ is dilated by some scalar $\lambda > 1$, i.e., $\sset \mapsto \lambda \sset$.} $\sset$ in order to guarantee $\scalfac\grtr \in \sset$. But the effective dimension might grow at the same time, so that the error bound of Theorem~\ref{thm:results:recovery} becomes worse. 
	This phenomenon is basically equivalent to finding an optimal tuning parameter for a regularized optimization program, such as the classical Lasso \eqref{eq:intro:lasso} with $R \geq 0$.
\item
	If there exists no adversarial noise, i.e., $\y = \yadv$, the additive error term $\advdev$ in $\accuracy$ vanishes. Moreover, a careful analysis of the proof of Theorem~\ref{thm:results:recovery} shows that the assumption of Lipschitz continuity in \ref{assump:estimator:difflip} as well as \eqref{eq:results:recovery:measurements} can be dropped in this case.
	
	When additionally assuming that $\loss = \losssq$, we can easily reproduce the original recovery result for the $\sset$-Lasso \eqref{eq:intro:klasso} presented in \cite[Thm.~1.9]{plan2015lasso}. Note that the RSC holds for \emph{any} scale $t > 0$ here (with high probability), which is a consequence of a version of Gordon's escape lemma (cf. \cite[Lem.~4.4]{plan2015lasso}).
	In this situation, we can draw the remarkable conclusion that the estimator \eqref{eq:intro:klasso} with non-linear inputs has essentially the same behavior as if it would have been applied to a simple linear model of the form $y_i = \scalfac \sp{\a_i}{\grtr} + z_i$ with $z_i \distributed \Normdistr{0}{\modeldev^2}$.
	\qeddiamond
\end{rmklist}
\end{remark}

\subsection{A Sufficient Condition for Restricted Strong Convexity}
\label{subsec:results:rsc}

In the previous part, it has turned out that RSC is a key concept to achieve recovery guarantees. 
Now, we shall investigate some sufficient conditions for this property.
The following theorem shows that RSC holds under fairly general assumptions on the considered loss:
\begin{theorem}\label{thm:results:rsc}
Suppose that \ref{assump:model:nonlinmod}, \ref{assump:model:advnoise}, \ref{assump:estimator:difflip}, \ref{assump:estimator:convex} hold with $\Covmatr = \I$. Furthermore, let $\lnorm{\grtr} = 1$ and $\grtrmu \in \sset$ for a (not necessarily convex) set $\sset \subset \R^n$.
For any fixed scale $t > 0$, we assume that the loss function $\loss \colon \R \times \Y \to \R$ satisfies the following properties:
\begin{properties}[3em]{L}
\setcounter{enumi}{2}
\item\label{thm:results:rsc:regularity}
	$\loss$ is twice continuously differentiable in the first variable. The corresponding second derivative is denoted by $\loss''(v,y) = \partder{\loss}{v}[2](v,y)$.
\item\label{thm:results:rsc:strongconvexity}
	Let us fix two numbers $0 < M, N \leq \infty$ with $M \geq \max\{32t, C_1 \cdot \abs{\scalfac}\}$ and $N \geq C_2 \cdot \mean[\abs{\fobs(\gaussianuniv)}] + \lnorm{\y - \yadv}[\infty]$, where $\gaussianuniv \distributed\Normdistr{0}{1}$ and $C_1, C_2 > 0$ are numerical constants. We assume that there exists a constant $C_{M,N} > 0$ such that
	\begin{equation}
		\loss''(v,y) \geq C_{M,N} \quad \text{for all $(v,y) \in \intvop{-M}{M} \times (\intvop{-N}{N} \intersec \Y)$.}
	\end{equation}
\end{properties}
\vspace{-.75\baselineskip}
Then, we can find numerical constants $C_3, C_4 > 0$ such that the following holds with probability at least $1-\exp(-C_4\cdot m)$:
If the number of observations obeys
\begin{equation}\label{eq:results:rsc:meanwidth}
	m \geq C_3 \cdot \effdim[t]{K-\grtrmu},
\end{equation}
then the empirical loss function $\lossemp[\yadv]$ satisfies RSC at scale $t$ (with respect to $\grtrmu$ and $\sset$), i.e., there exists $C_\RSC > 0$ (only depending on $C_{M,N}$) such that
	\begin{equation}\label{eq:results:rsc:rsc}
		\losstaylor{\x}{\grtrmu}{\yadv} \geq C_\RSC \lnorm{\x - \grtrmu}^2 \quad \text{for all $\x \in K \intersec (t \S^{n-1} + \grtrmu)$.}
	\end{equation}
\end{theorem}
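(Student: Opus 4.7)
My plan is to lower-bound the empirical Taylor remainder $\losstaylor{\x}{\grtrmu}{\yadv}$ uniformly over $\x \in K \intersec (t\S^{n-1} + \grtrmu)$ by reducing it to a truncated quadratic process in $\h := \x - \grtrmu$ and then invoking Mendelson's small ball method. The starting point is the exact second-order Taylor expansion supplied by \ref{thm:results:rsc:regularity}, namely
\begin{equation*}
\losstaylor{\x}{\grtrmu}{\yadv} = \tfrac{1}{m}\sum_{i=1}^m \int_0^1 (1-s)\,\loss''\bigl(\sp{\a_i}{\grtrmu} + s\sp{\a_i}{\h},\,\tilde{y}_i\bigr)\,\sp{\a_i}{\h}^2\,ds,
\end{equation*}
which shifts the task to controlling $\sp{\a_i}{\h}^2$ on the event that the first argument of $\loss''$ lies in the window $\intvop{-M}{M}$ of \ref{thm:results:rsc:strongconvexity}.

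To that end I introduce the indicator $\chi_i := \indprob{\{\abs{\sp{\a_i}{\grtrmu}}+\abs{\sp{\a_i}{\h}}\leq M\}\intersec\{\abs{\tilde{y}_i}\leq N\}}$. On the support of $\chi_i$ the entire $s$-family $\sp{\a_i}{\grtrmu}+s\sp{\a_i}{\h}$ stays in $\intvop{-M}{M}$ and $\tilde{y}_i\in\intvop{-N}{N}\intersec\Y$, so \ref{thm:results:rsc:strongconvexity} forces the integrand to be at least $C_{M,N}$; since $\int_0^1(1-s)\,ds = 1/2$ this gives the deterministic lower bound
\begin{equation*}
\losstaylor{\x}{\grtrmu}{\yadv} \geq \tfrac{1}{2}C_{M,N} \cdot \tfrac{1}{m}\sum_{i=1}^m \sp{\a_i}{\h}^2\,\chi_i.
\end{equation*}
A pointwise estimate then yields $\mean[\sp{\a_1}{\h}^2\chi_1] \gtrsim \lnorm{\h}^2 = t^2$: indeed $\mean[\sp{\a_1}{\h}^2] = t^2$, while the calibrated thresholds $M \geq \max\{32t,\,C_1\abs{\scalfac}\}$ and $N \geq C_2\mean[\abs{\fobs(\gaussianuniv)}]+\lnorm{\y-\yadv}[\infty]$, combined with Gaussian tails for $\sp{\a_i}{\grtrmu}$ and $\sp{\a_i}{\h}$, a Markov bound on $\abs{\tilde{y}_i}\leq\abs{\fobs(\sp{\a_i}{\grtr})}+\lnorm{\y-\yadv}[\infty]$, and Cauchy--Schwarz against the fourth moment, keep $\prob[\chi_1=0]$ below a small absolute constant.

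The crux is to upgrade this pointwise bound to a uniform one over $\h\in(K-\grtrmu)\intersec t\S^{n-1}$. Following Mendelson's small ball approach as developed in \cite{mendelson2014learning,mendelson2014learninggeneral,koltchinskii2015smallball,tropp2014convex}, I would bound $\sp{\a_i}{\h}^2\chi_i$ from below by the hard-truncated quantity $\sp{\a_i}{\h}^2\,\indprob{\{\abs{\sp{\a_i}{\h}}\leq\xi\}}$ for some $\xi \asymp t$, absorb the additional cut on $\sp{\a_i}{\grtrmu}$ and $\tilde{y}_i$ into the probability of success via a union bound, extract its positive expectation through a Paley--Zygmund inequality, and control the symmetrized empirical process by a Gaussian multiplier bound: the Lipschitz truncation has modulus $\asymp t$, and the Gaussian complexity of $(K-\grtrmu)\intersec t\ball[2][n]$ equals $\meanwidth[t]{K-\grtrmu}$, so the residual fluctuation is of order $t\cdot\meanwidth[t]{K-\grtrmu}/\sqrt{m} = t^2\sqrt{\effdim[t]{K-\grtrmu}/m}$. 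The hypothesis $m \geq C_3\effdim[t]{K-\grtrmu}$ of \eqref{eq:results:rsc:meanwidth} is precisely what forces this below a small multiple of $t^2$, and a bounded-differences inequality applied inside the small ball framework provides the exponential deviation $\exp(-C_4 m)$.

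I expect the uniform step to be the main obstacle: the localized set $(K-\grtrmu)\intersec t\S^{n-1}$ is generally non-convex and the map $\h\mapsto\chi_i\sp{\a_i}{\h}^2$ is neither Lipschitz nor sub-Gaussian, so the usual contraction and concentration route fails. Replacing concentration of the quadratic process itself by concentration of an indicator-based lower bound is what makes Mendelson's method the right tool here; coupling its output with the deterministic Hessian window of \ref{thm:results:rsc:strongconvexity} then gives the promised RSC inequality with $C_\RSC$ depending only on $C_{M,N}$.
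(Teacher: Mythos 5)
Your overall strategy matches the paper's: expand $\losstaylor{\x}{\grtrmu}{\yadv}$ by Taylor's theorem with exact remainder, use \ref{thm:results:rsc:strongconvexity} to replace $\loss''$ by a constant on a truncation window, and then run Mendelson's small ball method on the resulting process in $\h = \x - \grtrmu$. Your integral form of the remainder and your single indicator $\chi_i$ are sound alternatives to the paper's Lagrange-form expansion and its factored indicator product $\empproc(\a_i,\h)$. One harmless packaging difference: the paper first applies $(\sum_i v_i^2)^{1/2} \geq \tfrac{1}{\sqrt{m}}\sum_i \abs{v_i}$ so as to work with the \emph{linear} process $\tfrac{1}{m}\sum_i\abs{\empproc(\a_i,\h)}$, whereas you stay with the \emph{quadratic} one; both fit the small ball framework.

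However, there is a genuine gap in the uniform step as you describe it. The claimed lower bound $\sp{\a_i}{\h}^2\chi_i \geq \sp{\a_i}{\h}^2\,\indprob{\abs{\sp{\a_i}{\h}}\leq\xi}$ is false: $\chi_i$ contains the extra conditions $\abs{\sp{\a_i}{\grtrmu}}+\abs{\sp{\a_i}{\h}}\leq M$ and $\abs{\tilde{y}_i}\leq N$, so $\chi_i$ can vanish even when $\abs{\sp{\a_i}{\h}}\leq\xi$, and dropping $\chi_i$ is not free. Nor can these per-sample cuts be ``absorbed into the probability of success via a union bound'': there are $m$ such events, each occurring with constant probability, and the overall probability estimate would collapse. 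The fix — and this is exactly what the paper's $\empproc$-construction achieves — is to keep all of the cuts \emph{inside} the functional whose small ball probability you estimate, and then bound $Q_{2t}(\sset_t)=\inf_{\h}\prob[\abs{\empproc(\gaussian,\h)}\geq 2t]$ from below by a fixed constant, using Markov bounds on $\prob[\abs{\sp{\a_i}{\grtrmu}}>M/2]$ and $\prob[\abs{\fobs(\sp{\a_i}{\grtr})}>N-\advdev_\infty]$ together with the calibration $M\geq\max\{32t, C_1\abs{\scalfac}\}$ and $N\geq C_2\mean[\abs{\fobs(\gaussianuniv)}]+\lnorm{\y-\yadv}[\infty]$. (This is where Paley--Zygmund would go if you wanted it, though direct Gaussian tail estimates suffice.) A second, fixable omission: the Rademacher comparison/contraction step requires the truncation in $\sp{\a_i}{\h}$ to be Lipschitz, so the hard cutoff $\indprob{\abs{\sp{\a_i}{\h}}\leq\xi}$ must be replaced by a softened ``hat'' version (and likewise the Markov counting indicator by a piecewise-linear $\psi_t$), as the paper does with $\indset{M/2}^{(s)}$ and $\psi_t$. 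You already flag that the truncated map ``is neither Lipschitz nor sub-Gaussian,'' but the proposal doesn't resolve it; without the smoothing the contraction argument does not close.
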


A proof of Theorem~\ref{thm:results:rsc} is given in Subsection~\ref{subsec:proof:rsc}.
Although \ref{thm:results:rsc:strongconvexity} looks rather technical, this is actually a relatively mild condition on $\loss$. Roughly speaking, it requires that for ``sufficiently'' large values of $M$ and $N$ (note that $t$ is typically assumed to be very small), one can bound $\loss''$ from below by a step function supported around the origin:
\begin{equation}
	\loss''(v,y) \geq C_{M,N} \cdot \indset{\intvop{-M}{M}\times\intvop{-N}{N}}(v, y) \quad \text{for all $(v, y)\in \R\times \Y$.}
\end{equation}
In other words, we assume that $\loss$ is \emph{locally} strongly convex in an a certain neighborhood of $(0,0)$.
For example, this is automatically satisfied for \emph{strictly convex} loss functions, which were already considered in Theorem~\ref{thm:intro:recovery}:
\begin{corollary}\label{cor:results:rsc:strictlyconvex}
The assumption \ref{thm:results:rsc:strongconvexity} in Theorem~\ref{thm:results:rsc} is fulfilled if $\loss''(v,y) \geq \mathcal{F}(v) > 0$ holds for all $(v,y) \in \R \times \Y$ and some continuous function $\mathcal{F}\colon \R \to \intvop{0}{\infty}$. 
\end{corollary}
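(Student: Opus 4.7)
The plan is a brief compactness argument—essentially a one-line observation followed by verification of each item in \ref{thm:results:rsc:strongconvexity}. First I would set $M$ to its minimum allowable value, $M := \max\{32t, C_1 \cdot \abs{\scalfac}\}$, which is finite since $t$ is a fixed positive scale and $\scalfac \in \R$ is fixed (assuming \eqref{eq:results:recovery:modelparam:scalfac} admits a solution, as postulated throughout Section~\ref{sec:results}). Similarly, I would pick any $N$ meeting the lower bound from \ref{thm:results:rsc:strongconvexity}, e.g.\ $N := C_2 \cdot \mean[\abs{\fobs(\gaussianuniv)}] + \lnorm{\y - \yadv}[\infty]$; the specific value of $N$ is immaterial to the corollary's conclusion because the hypothesis $\loss''(v,y) \geq \mathcal{F}(v)$ is uniform in the second variable $y$.

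With $M$ fixed and finite, I would then produce the constant $C_{M,N}$ by extremizing $\mathcal{F}$ over the compact interval $[-M, M]$:
\[
    C_{M,N} := \min_{v \in \intvcl{-M}{M}} \mathcal{F}(v).
\]
Since $\mathcal{F} \colon \R \to \intvop{0}{\infty}$ is continuous and $[-M, M]$ is compact, the extreme value theorem guarantees that this minimum is attained, and the strict positivity hypothesis $\mathcal{F} > 0$ forces $C_{M,N} > 0$. Combining this with the assumption $\loss''(v,y) \geq \mathcal{F}(v)$ immediately gives
\[
    \loss''(v,y) \geq \mathcal{F}(v) \geq C_{M,N} \quad \text{for all $(v, y) \in \intvop{-M}{M} \times (\intvop{-N}{N} \intersec \Y)$,}
\]
which is precisely \ref{thm:results:rsc:strongconvexity}.

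There is no genuine analytical obstacle: the corollary is essentially a reformulation of the fact that a continuous, strictly positive function attains a positive minimum on any compact set. The only structural point worth flagging is that $M$ must be kept finite for the minimum to remain strictly positive in general (since $\mathcal{F}$ could otherwise decay to zero at infinity), and the formulation of \ref{thm:results:rsc:strongconvexity} is deliberately written to admit such a finite choice by imposing only a lower bound on $M$ rather than requiring $M = \infty$.
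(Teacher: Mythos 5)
Your proof is correct and follows essentially the same route as the paper: choose $M := \max\{32t, C_1 \abs{\scalfac}\}$ and extract $C_{M,N}$ as the positive minimum of the continuous function $\mathcal{F}$ on the compact interval $[-M,M]$. The only cosmetic difference is that the paper sets $N := \infty$ rather than a finite value, which is marginally cleaner since the hypothesis is uniform in $y$ anyway (and avoids $N$ becoming a random quantity via $\lnorm{\y - \yadv}[\infty]$), but as you correctly observe this choice is immaterial to the argument.
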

\begin{proof}
	Choose $M := \max\{32t, C_1 \cdot \abs{\scalfac}\}$ and $N := \infty$. The continuity of $\mathcal{F}$ implies that a positive minimum $C_{M,N}$ of $\mathcal{F}$ is attained in the interval $\intvcl{-M}{M}$, and therefore $\loss''(v,y) \geq \mathcal{F}(v) \geq C_{M,N}$ for all $v \in \intvop{-M}{M}$ and $y \in \Y$.
\end{proof}

\begin{remark}\label{rmk:results:rsc:rscconstant}
\begin{rmklist}
\item\label{rmk:results:rsc:rscconstant:scale}
	It is very important to notice that the RSC-constant $C_\RSC$ in \eqref{eq:results:rsc:rsc} only depends on the choice of $M$ and $N$. In fact, when combining this statement with Theorem~\ref{thm:results:recovery} (as we do in the next subsection), we need to ensure that the constant $C$ in \eqref{eq:results:recovery:accuracy} has no explicit dependence on $t$. 
	Certainly, the condition of \ref{thm:results:rsc:strongconvexity} requires that $M \geq 32 t$, but this is no severe restriction, since we are interested in small values of $t$ anyway. 
	A careful analysis of the proof of Theorem~\ref{thm:results:rsc} shows that $C_\RSC$ scales as $C_{M,N}$. 
	Hence, if $M$ and $N$ become large, the size of the RSC-constant actually relies on the asymptotic decay of $(v,y) \mapsto \loss''(v,y)$.
	However, Theorem~\ref{thm:results:rsc} is rather a non-asymptotic result, since $M$ and $N$ are fixed albeit possibly large.
	In some situations, it might be even necessary to set $N = \infty$, for example, if the term $\lnorm{\y - \yadv}[\infty] \ (\geq \advdev)$ is a random variable for which no upper bound (with high probability) exists.
\item
	When using the square loss $\losssq(v,y) = \tfrac{1}{2} (v-y)^2$, the conditions \ref{assump:estimator:difflip}--\ref{thm:results:rsc:strongconvexity} are trivially fulfilled with $C_{M,N} = 1$, since $(\losssq)'' \equiv 1$. We have already pointed out that, in this case, RSC is equivalent to having a lower bound for the minimal restricted singular value of $\tfrac{1}{m}\A$. 
	In that sense, one can regard Theorem~\ref{thm:results:rsc} as a natural generalization of classical concepts, such as \emph{Gordon's escape through a mesh} (\cite{gordon1988escape}) or \emph{restricted eigenvalues} (see \cite{raskutti2010restrev,geer2009lasso} for example).
	\qeddiamond
\end{rmklist}
\end{remark}

\subsection{Extension to Anisotropic Measurement Vectors}
\label{subsec:results:aniso}

In this final part, we shall drop the hypothesis of an isotropic measurement process, that means, we now allow the vectors $\a_1, \dots, \a_m$ to follow any multivariate Gaussian distribution $\Normdistr{\vec{0}}{\Covmatr}$ with arbitrary positive definite covariance matrix $\Covmatr \in \R^{n \times n}$. The following theorem shows that, with some slight modifications of the previous results, signal recovery is still feasible:\footnote{If $\Covmatr \in \R^{n \times n}$ is positive definite, there exists a unique, positive definite \define{matrix square root} $\sqrt{\Covmatr} \in \R^{n \times n}$ with $\sqrt{\Covmatr} \cdot \sqrt{\Covmatr} = \Covmatr$.}
\begin{theorem}\label{thm:results:aniso}
Suppose that \ref{assump:model:nonlinmod}, \ref{assump:model:advnoise}, \ref{assump:estimator:difflip}--\ref{thm:results:rsc:strongconvexity} hold true, and that $\scalfac$, $\modeldev$, $\modeldeveta$ are defined according to \eqref{eq:results:recovery:modelparam}. Moreover, let $\lnorm{\sqrt{\Covmatr}\grtr} = 1$ and $\grtrmu \in \sset$ for a convex set $\sset \subset \R^n$.
Then, there exist constants\footnote{More specifically, $C_1$ is numerical constant, whereas $C_2$ might depend on the ``probability of success'' as well as on the choice of $C_{M,N}$ in \ref{thm:results:rsc:strongconvexity}; see also Remark~\ref{rmk:results:rsc:rscconstant}\ref{rmk:results:rsc:rscconstant:scale}.} $C_1, C_2 > 0$ such that the following holds for any error accuracy $t > 0$: Assuming that the number of observations obeys
\begin{equation}
	m \geq C_1 \cdot \effdim[t]{\sqrt{\Covmatr}(\sset - \grtrmu)},
\end{equation}
and that
\begin{equation}
	t > C_2\cdot \accuracy := C_2\cdot \Big( \frac{\modeldev \cdot \sqrt{\effdim[t]{\sqrt{\Covmatr}(\sset - \grtrmu)}} + \modeldeveta}{\sqrt{m}} + \advdev \Big),
\end{equation}
any minimizer $\solu$ of \eqref{eq:intro:estimator} satisfies $\lnorm{\sqrt{\Covmatr}(\solu - \grtrmu)} \leq t$ with high probability.
\end{theorem}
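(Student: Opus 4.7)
The plan is to reduce the anisotropic setting to the isotropic one via a whitening change of variables, and then invoke Theorem~\ref{thm:results:recovery} together with Theorem~\ref{thm:results:rsc}. Concretely, I would set
\begin{equation}
\tilde{\a}_i := \sqrt{\Covmatr}^{-1} \a_i, \quad \tilde{\x} := \sqrt{\Covmatr}\,\x, \quad \tilde{\grtr} := \sqrt{\Covmatr}\,\grtr, \quad \tilde{\sset} := \sqrt{\Covmatr}\,\sset.
\end{equation}
Since $\sqrt{\Covmatr}$ is invertible, $\tilde{\a}_i \sim \Normdistr{\vec{0}}{\I}$ are i.i.d. isotropic Gaussian vectors, and the crucial identity $\sp{\a_i}{\x} = \sp{\tilde{\a}_i}{\tilde{\x}}$ holds. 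In particular, the observations take the form $y_i = \fobs(\sp{\tilde{\a}_i}{\tilde{\grtr}})$ with the same $\fobs$, the adversarial parameter $\advdev$ is unchanged, and $\lnorm{\tilde{\grtr}} = \lnorm{\sqrt{\Covmatr}\grtr} = 1$ by hypothesis.

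Next, I would rewrite the estimator \eqref{eq:intro:estimator} in the tilde coordinates. Because $\lossemp[\yadv](\x) = \tfrac{1}{m}\sum_{i=1}^m \loss(\sp{\tilde{\a}_i}{\tilde{\x}}, \tilde{y}_i)$ and the constraint $\x \in \sset$ is equivalent to $\tilde{\x} \in \tilde{\sset}$, minimizing over $\sset$ in the original variables is the same as minimizing over $\tilde{\sset}$ in the whitened variables. Hence $\solu$ is a minimizer of the original problem iff $\tilde{\solu} := \sqrt{\Covmatr}\solu$ is a minimizer of the isotropic problem over $\tilde{\sset}$. Since the model parameters $\scalfac, \modeldev, \modeldeveta$ from \eqref{eq:results:recovery:modelparam} only involve $\loss$, $\fobs$, and a standard Gaussian $\gaussianuniv$, they are invariant under this change of variables. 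Moreover, $\scalfac\tilde{\grtr} = \sqrt{\Covmatr}(\scalfac\grtr) \in \tilde{\sset}$ and $\tilde{\sset}$ is convex.

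It then remains to check that the hypotheses of Theorem~\ref{thm:results:rsc} (and thereby Theorem~\ref{thm:results:recovery}) apply to the whitened problem at scale $t$ with the signal set $\tilde{\sset}$ and signal $\scalfac\tilde{\grtr}$. The assumptions \ref{assump:estimator:difflip}, \ref{assump:estimator:convex}, \ref{thm:results:rsc:regularity}, \ref{thm:results:rsc:strongconvexity} are statements about $\loss$ alone and transfer verbatim. The local mean width to be controlled is
\begin{equation}
\meanwidth[t]{\tilde{\sset} - \scalfac\tilde{\grtr}} = \meanwidth[t]{\sqrt{\Covmatr}(\sset - \grtrmu)},
\end{equation}
so the sample size condition $m \geq C_1 \cdot \effdim[t]{\sqrt{\Covmatr}(\sset - \grtrmu)}$ is exactly what is needed to invoke Theorem~\ref{thm:results:rsc} in the tilde variables, yielding RSC at scale $t$ with an RSC constant only depending on $C_{M,N}$.

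Applying Theorem~\ref{thm:results:recovery} to the whitened problem then delivers, under the assumed lower bound on $t$, the estimate $\lnorm{\tilde{\solu} - \scalfac\tilde{\grtr}} \leq t$ with high probability, which is precisely $\lnorm{\sqrt{\Covmatr}(\solu - \grtrmu)} \leq t$. There is no genuinely hard step here; the main things to be careful about are (i) checking that each earlier hypothesis is stated in a form that is invariant under the linear bijection $\x \mapsto \sqrt{\Covmatr}\x$ (this holds because $\fobs$, $\loss$, $\advdev$, and the i.i.d. Gaussian structure are all preserved), and (ii) consolidating the constants, in particular making sure that $C_2$ inherits its dependence on $C_{M,N}$ via $C_\RSC$ as noted in Remark~\ref{rmk:results:rsc:rscconstant}\ref{rmk:results:rsc:rscconstant:scale}, and that it does not pick up an unwanted dependence on $t$ or on $\Covmatr$ itself.
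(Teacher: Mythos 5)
Your proposal is correct and follows the same route as the paper: the whitening substitution $\a_i = \sqrt{\Covmatr}\,\tilde{\a}_i$ (equivalently $\tilde{\a}_i = \sqrt{\Covmatr}^{-1}\a_i$) together with the identity $\sp{\a_i}{\x} = \sp{\tilde{\a}_i}{\sqrt{\Covmatr}\x}$ reduces the statement to the isotropic combination of Theorem~\ref{thm:results:recovery} and Theorem~\ref{thm:results:rsc}. Your argument is in fact somewhat more careful than the paper's version in explicitly verifying that $\scalfac, \modeldev, \modeldeveta$, $\advdev$, and the hypotheses on $\loss$ are all invariant under the whitening bijection, but this is the same proof.
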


\begin{proof}
The statement of Theorem~\ref{thm:results:aniso} for $\Covmatr = \I$ is a straightforward combination of our main results, Theorem~\ref{thm:results:recovery} and Theorem~\ref{thm:results:rsc}.
Next, we follow the idea of \cite[Cor.~1.6]{plan2015lasso}, where the case of an anisotropic covariance structure has been reduced to the situation of $\Covmatr = \I$.
For this, we first observe that one may write $\a_i = \sqrt{\Covmatr}\a_i'$ for some $\a_i' \distributed \Normdistr{\vec{0}}{\I}$. This implies
\begin{equation}
\sp{\a_i}{\x} = \sp{\sqrt{\Covmatr} \a_i'}{\x} = \sp{\a_i'}{\sqrt{\Covmatr}\x} \quad \text{for all $\x \in \R^n$.}
\end{equation}
Hence, substituting $\grtr$ by $\sqrt{\Covmatr}\grtr$ and $\A$ by $\A' := \matr{\a_1' & \dots & \a_m'}^\T$, we obtain precisely the same model as in \ref{assump:model:nonlinmod} but with a trivial covariance matrix $\Covmatr = \I$.
The generalized estimator \eqref{eq:intro:estimator} can be treated in a similar way:
\begin{align}
	\solu ={}& \argmin_{\x \in \sset} \tfrac{1}{m} \sum_{i = 1}^m \loss(\sp{\a_i}{\x}, \tilde{y}_i) \\
	={}& \argmin_{\x \in \sset} \tfrac{1}{m} \sum_{i = 1}^m \loss(\sp{\a'_i}{\sqrt{\Covmatr}\x}, \tilde{y}_i) \\
	={}& \sqrt{\Covmatr}^{-1} \cdot \argmin_{\x' \in \sqrt{\Covmatr}\sset} \tfrac{1}{m} \sum_{i = 1}^m \loss(\sp{\a_i'}{\x'}, \tilde{y}_i).
\end{align}
Now, we are exactly in the initial setup of Theorem~\ref{thm:results:aniso} with $\Covmatr = \I$, but with the caveat that $\solu$, $\grtr$, and $\sset$ are replaced by $\sqrt{\Covmatr}\solu$, $\sqrt{\Covmatr}\grtr$, and $\sqrt{\Covmatr}\sset$, respectively. Since the statement has been already proven for $\Covmatr = \I$, the general claim follows.
\end{proof}

A crucial feature of Theorem~\ref{thm:results:aniso} is that it still works with the original estimator \eqref{eq:intro:estimator}, which does not require any knowledge of $\Covmatr$. 
This implies a certain practical relevance, since the exact correlation structure of $\A$ is mostly unknown in real-world applications.
But as before, we need to ``pay the price'' that the actual recovery statement explicitly depends on our model uncertainty:
On the one hand, the condition of $\lnorm{\sqrt{\Covmatr} \grtr} = 1$ might involve a certain rescaling of the ground-truth signal $\grtr$, and the error bound $\lnorm{\sqrt{\Covmatr}(\solu - \grtrmu)} \leq t$ is affected by $\sqrt{\Covmatr}$ as well. And on the other hand, we need to control the modified mean width $\meanwidth[t]{\sqrt{\Covmatr}(\sset - \grtrmu)}$. This could be done similarly to \cite[Rmk.~1.7]{plan2015lasso}, leading to a (non-optimal) estimate\footnote{Note that in contrast to \cite[Rmk.~1.7]{plan2015lasso}, we do not assume that $\sset - \grtrmu$ is a cone, which makes the estimate slightly weaker.}
\begin{equation}
\meanwidth[t]{\sqrt{\Covmatr}(\sset - \grtrmu)} \leq \meanwidth[t]{\sset - \grtrmu} \cdot \begin{cases}
	\norm{\sqrt{\Covmatr}}, & \text{if $\norm{\sqrt{\Covmatr}^{-1}} < 1$,}\\
	\norm{\sqrt{\Covmatr}^{-1}}\norm{\sqrt{\Covmatr}}, & \text{if $\norm{\sqrt{\Covmatr}^{-1}} \geq 1$.}\\
\end{cases}
\end{equation}
Informally stated, we obtain the following final conclusion from Theorem~\ref{thm:results:aniso}:
\begin{highlight}
	If the covariance matrix $\Covmatr$ is not too badly conditioned, an accurate recovery of $\grtr$ by \eqref{eq:intro:estimator} is still possible if $m \gg \effdim[t]{\sset - \grtrmu}$ and the adversarial noise is low.
\end{highlight}

\begin{remark}
	By applying exactly the same strategy as in the proof of Theorem~\ref{thm:results:aniso}, one could also generalize the statements of Theorem~\ref{thm:results:recovery} and Theorem~\ref{thm:results:rsc} to anisotropic measurements. But note that the property of RSC then needs to be formulated with respect to $\sqrt{\Covmatr} \grtrmu$ and $\sqrt{\Covmatr} \sset$. These technical details are left to the reader. \qeddiamond
\end{remark}

\section{Consequences and Examples}
\label{sec:consequences}

The theoretical outcomes of the previous section offer a broad range of applications to compressed sensing, signal processing as well as machine learning.
To demonstrate this versatility, we shall now discuss several examples of signal sets, observation models, and loss functions, which could be applied to our main results.

\subsection{Examples of Signal Sets}
\label{subsec:consequences:signal}

Let us recall the crucial assumption
\begin{equation}\label{eq:consequences:signal:meascount}
m \gtrsim \effdim[t]{\sset - \grtrmu} = \meanwidth[t]{\sset - \grtrmu}^2 / t^2,
\end{equation}
which appears both in Theorem~\ref{thm:results:recovery} and Theorem~\ref{thm:results:rsc}.
It implies that the number of required measurements heavily relies on the effective dimension of $\sset - \grtrmu$.
This makes especially those sets of small mean width appealing, coinciding with our wish to exploit the low-complexity structure of the considered signal class.
On the other hand, it is usually very hard to compute $\effdim[t]{\sset - \grtrmu}$ for an arbitrary subset $\sset \subset \R^n$, or at least, to find a meaningful upper bound for it.
The notion of mean width (or effective dimension) is therefore rather of theoretical interest in a general situation. But fortunately, there exist many important special cases for which (sharp) bounds are available. For the following examples, let us recall that the \emph{global} effective dimension is given by $\effdim{L} := \meanwidth{L}^2$ for $L \subset \R^n$.
\begin{example}\label{ex:consequences:signal}
\begin{rmklist}
\item
	\define{Linear subspaces.} Assume that $\sset \subset \R^n$ is a linear subspace of dimension $d$. If $\grtrmu \in \sset$, we have (cf. \cite{plan2015lasso})
	\begin{equation}\label{eq:consequences:signal:linsubspace}
		\effdim[t]{\sset - \grtrmu} = \effdim[1]{\sset - \grtrmu} = \effdim{(\sset - \grtrmu) \intersec \ball[2][n]} \asymp d.
	\end{equation}
	In fact, $\effdim{\cdot}$ and $\effdim[t]{\cdot}$ measure the algebraic dimension here, which particularly justifies why we speak of the \emph{effective dimension} of a set. Another consequence of \eqref{eq:consequences:signal:linsubspace} is a trivial upper bound for the effective dimension of any subset $\sset \subset \R^n$:
	\begin{equation}
		\effdim[t]{\sset - \grtrmu} \leq \effdim[t]{\R^n} \asymp n.
	\end{equation}
	With regard to our main results, this mimics the behavior of classical, unconstrained estimators, such as an ordinary least-squares fit.
\item\label{ex:consequences:signal:polytope}
	\define{Finite sets and polytopes.} Let $\sset' = \{\x_1,\dots,\x_k\} \subset \R^n$ be a finite set. Then, $\sset := \convhull(\sset')$ is a polytope and we have (cf. \cite[Ex.~1.3.8]{vershynin2014estimation})
	\begin{equation}
		\effdim{\sset - \grtrmu} = \effdim{\sset' - \grtrmu} \lesssim (\max_{1 \leq i \leq k}\lnorm{\x_i - \grtrmu}^2) \cdot \log(k),
	\end{equation}
	where we have also used that the mean width is invariant under taking the convex hull (\cite[Prop.~2.1]{plan2013robust}). This shows that the effective dimension of a polytope only logarithmically depends on the number of its vertices, even though it might have full algebraic dimension.
\item
	\define{(Approximately) Sparse vectors.} The reconstruction of sparse signals is one of the ``driving forces'' for our results. Let us assume that $\grtr \in S^{n-1}$ is \define{$s$-sparse}, i.e., $\lnorm{\grtr}[0] \leq s$. From the Cauchy-Schwarz inequality, it follows $\lnorm{\grtr}[1] \leq \sqrt{\lnorm{\grtr}[0]} \cdot \lnorm{\grtr} \leq \sqrt{s}$, that is, $\grtr \in \sqrt{s}\ball[1][n]$. Therefore, $\sqrt{s}\ball[1][n] \intersec S^{n-1}$ can be seen as a set of \define{approximately sparse vectors}. It was shown in \cite[Sec.~III]{plan2013robust} that for $\sset = \scalfac (\sqrt{s}\ball[1][n] \intersec \ball[2][n])$, one has 
	\begin{equation}
		\effdim{\sset - \grtrmu} \lesssim s \log(\tfrac{2n}{s}).
	\end{equation}
	The same bound can be achieved for $\effdim[t]{\sset - \grtrmu}$ if $\sset$ is an appropriately scaled $\l{1}$-ball such that $\grtrmu$ lies at its boundary (cf. \cite{chandrasekaran2012geometry,plan2015lasso}). Putting this estimate into \eqref{eq:consequences:signal:meascount}, we essentially end up with a threshold for the minimal number of required measurements that is well-known from the theory of compressed sensing and sparse recovery.
\item
	\define{(Sparse) Representations in dictionaries.} In many real-world scenarios, the hypothesis of (approximate) sparsity only holds with respect to a certain \define{dictionary} $\dict \in \R^{n \times n'}$, i.e., there exists a sparse coefficient vector $\x' \in \R^{n'}$ such that $\grtr = \dict \x'$. More generally, we may assume that $\sset = \dict \sset'$ for some appropriate \define{coefficient set} $\sset' \subset \R^{n'}$. A basic application of \define{Slepian's inequality} \cite[Lem.~8.25]{foucart2013cs} yields
	\begin{equation}\label{eq:consequences:signal:slepianbound}
		\effdim{\sset - \grtrmu} = \effdim{\dict(\sset' - \scalfac \x')} \lesssim \norm{\dict}^2 \cdot \effdim{\sset' - \scalfac \x'},
	\end{equation}
	implying that the impact of a linear transformation can be controlled by means of its operator norm. But note that the estimate of \eqref{eq:consequences:signal:slepianbound} might become very weak when $\norm{\dict}$ is large, which is typically the case for overcomplete dictionaries. However, there are often sharper bounds available.
	For example, if $\sset' = \convhull(\{\x_1',\dots,\x_k'\}) \subset \R^{n'}$ is a polytope, so is $\sset = \dict \sset'$, and by Example~\ref{ex:consequences:signal}\ref{ex:consequences:signal:polytope}, we obtain
	\begin{equation}
		\effdim{\sset - \grtrmu} = \effdim{\dict(\sset' - \scalfac \x')} \lesssim (\max_{1 \leq i \leq k}\lnorm{\dict(\x_i' - \scalfac \x')}^2) \cdot \log(k).
	\end{equation}	
\item
	There are many further examples of structured and low-dimensional signal sets. For instance, one could also consider the \define{elastic net} \cite{zou2005elastic}, \define{fused penalties} \cite{tibshirani2005fused}, \define{OSCAR/OWL} \cite{bondell2008oscar,figueiredo2014owl}, etc.
	\qeddiamond
\end{rmklist}
\end{example}
In some of the above examples, we have only given estimates of the global mean width. In fact, its local counterpart $\meanwidth[t]{\sset - \grtrmu}$ is usually harder to control because it particularly depends on the ``position'' of $\grtrmu$ in $\sset$.
We will return to this issue in Subsection~\ref{subsec:consequences:convexgeo}, where the convex geometry of our problem setup is discussed.

\subsection{Examples of (Non-)Linear Models}
\label{subsec:consequences:model}

Now, let us give some typical examples of the (random) non-linearity $\fobs\colon \R \to \Y$ in our observation model \ref{assump:model:nonlinmod}:
\begin{example}\label{ex:consequences:model}
\begin{rmklist}
\item\label{ex:consequences:model:linear}
	\define{Noisy linear observations.} In the classical settings of compressed sensing and linear regression, one considers $f(v) = \lambda v + z$, where $\lambda > 0$ is fixed and $z$ is mean-zero noise (independent of $\A$). Since the randomness of $\fobs$ is understood sample-wise, this leads to observations of the form $y_i = \lambda \sp{\a_i}{\grtr} + z_i$, or in short $\y = \lambda\A\grtr + \z$ with $\z = (z_1, \dots, z_m)$. If $\loss = \losssq$, we can explicitly compute the model parameters (cf. \eqref{eq:intro:modelparam} and \eqref{eq:results:recovery:modelparam}):
	\begin{equation}
		\scalfac = \mean[\fobs(\gaussianuniv) \cdot \gaussianuniv] = \lambda, \quad \modeldev^2 = \mean[(\fobs(\gaussianuniv) - \scalfac\gaussianuniv)^2] = \mean[z^2], \quad \modeldeveta^2 = \mean[(\fobs(\gaussianuniv) - \scalfac\gaussianuniv)^2 \cdot \gaussianuniv^2] = \mean[z^2].
	\end{equation}
	As our intuition suggests, $\scalfac$ indeed measures the contribution (scaling) of the signal part of $\fobs$, whereas $\modeldev^2$ and $\modeldeveta^2$ capture the variance of the noise. 
	Hence, one can regard the quotient $\scalfac / \max\{\modeldev, \modeldeveta\}$ as a certain \emph{signal-to-noise ratio} of the underlying output rule.
	Regarding Theorem~\ref{thm:intro:recovery} and Theorem~\ref{thm:results:recovery}, we can particularly conclude that the constants of the respective error bounds are well-behaved if this signal-to-noise ratio is large.
	
	An interesting special case is the \emph{noiseless} scenario, where $z \equiv 0$ and $\advdev = 0$. Then, $\modeldev^2 = \modeldeveta^2 = 0$ and our main results even provide \emph{exact recovery}, supposed that $m \gtrsim \effdim[t]{\sset - \grtrmu}$ \emph{for all} $t > 0$. The latter condition is closely related to the mean width of \define{descent cones} which is discussed in Subsection~\ref{subsec:consequences:convexgeo}.
\item\label{ex:consequences:model:onebit}
	\define{$1$-bit observations.} A typical example of a non-linearity is the $\sign$-function $\fobs(v) = \sign(v)$ with $\Y = \{-1,0,+1\}$, encoding a signal by $1$-bit measurements. Such a model is usually associated with the problems of $1$-bit compressed sensing and classification. Similarly to Example~\ref{ex:consequences:model}\ref{ex:consequences:model:linear}, we can also allow for noise here, e.g., by incorporating \define{random bit-flips}: $\fobs(v) = \eps \cdot \sign(v)$ where $\eps$ is an independent $\pm 1$-valued random variable with $\prob[\eps = 1] =: p$. If $\loss = \losssq$, we have
	\begin{align}
		\scalfac &= \mean[\eps \cdot \sign(\gaussianuniv) \cdot \gaussianuniv] = \mean[\eps] \cdot \mean[\abs{\gaussianuniv}] =  (2p - 1) \cdot \sqrt{\tfrac{2}{\pi}}, \\
		\modeldev^2 &= 1 - \tfrac{2}{\pi} (1-2p)^2, \quad \modeldeveta^2 = 1 - \tfrac{2}{\pi} (1-2p)^2.
	\end{align}
	Since $\scalfac > 0$ for any $p > 1/2$, this shows that $\grtrmu$ can be still estimated even when the chance of a bit-flip is close to $\frac{1}{2}$ (\cite[Sec.~III.A]{plan2013robust}).
	Another important class of observation models takes the form $\fobs(v) = \sign(v + z)$, where $z$ is an independent noise term, e.g., following a logit distribution. This corresponds to the classical setting of \define{logistic regression}, which has been widely studied in statistical literature, also in combination with sparsity-promoting constraints (see \cite{bach2010logreg} for instance).
	
	Apart from independent noise, one could also permit adversarial bit-flips according to \ref{assump:model:advnoise}. In this case, $\yadv \in \{-1,+1\}^m$ is a set of binary responses and $\advfrac := \lnorm{\y - \yadv}[0]$ counts the number of bit errors. This implies that the adversarial noise parameter
	\begin{equation}
		\advdev = \Big(\tfrac{1}{m} \sum_{i = 1}^m \abs{\tilde{y}_i - y_i}^2\Big)^{1/2} = 2 \cdot \sqrt{\tfrac{\advfrac}{m}}
	\end{equation}
	is proportional to the (square root of the) \emph{fraction of wrong observations}.
\item
	Besides these two examples, there are various other observation models that one could think of, e.g., \define{generalized linear models},\footnote{However, one should be aware of the fact that there is no strict containment (in both directions) between the single-index model of \ref{assump:model:nonlinmod} and generalized linear models (cf. \cite[Sec.~6]{plan2014highdim}).} \define{quantized measurements}, or \define{multiple labels}.
	\qeddiamond
\end{rmklist}
\end{example}

\subsection{Examples of Convex Loss Functions}
\label{subsec:consequences:loss}

The recovery bound of Theorem~\ref{thm:results:recovery} indicates that the asymptotic error rate (in $m$) is essentially independent from the applied loss function.
However, the precise quantitative behavior of the approximation error will substantially depend on the choice of $\loss$ as well as the underlying observation model.
In this work, we do not discuss the issue of what loss function is best suited for a specific application, but at least, let us mention some popular examples:
\begin{example}
\begin{rmklist}
\item
	\define{Square loss.} The square loss 
	\begin{equation}
	\losssq\colon \R \times \Y = \R \times \R \to \R, \ (v,y) \mapsto \tfrac{1}{2} (v-y)^2
	\end{equation}
	has been extensively discussed in the previous parts and has served as the ``prototype'' example for our results. We have seen that the definitions of the model parameters $\scalfac$, $\modeldev$, and $\modeldeveta$ simplify significantly (cf. \eqref{eq:intro:modelparam}), and since $(\losssq)'' \equiv 1$, the RSC is immediately implied by Theorem~\ref{thm:results:rsc} for arbitrarily large values of $M$ and $N$.
\item
	\define{Logistic loss.} When working with a classification model such as in Example~\ref{ex:consequences:model}\ref{ex:consequences:model:onebit}, the square loss might suffer from the drawback that it also penalizes correctly classified samples. In this case, it could be more beneficial to use the \define{logistic loss}\footnote{Note that we disregard that $\sign(0) = 0$ here, since this case occurs with probability zero anyway.}
	\begin{equation}\label{eq:consequences:loss:logloss}
		\loss^{\text{log}}\colon \R \times \Y = \R \times \{-1,+1\} \to \R, \ (v,y) \mapsto - y \cdot v + \log(1+\exp(- y\cdot v)).
	\end{equation}
	One easily checks that \ref{assump:estimator:difflip}--\ref{thm:results:rsc:strongconvexity} hold true so that $\loss^{\text{log}}$ is admissible for our framework. Note that, statistically, the resulting estimator $(P_{\loss^{\text{log}},\sset})$ corresponds to minimizing the negative log-likelihood of a logistic regression model, combined with a structural constraint.
\item
	\define{Generalized linear models.} Motivated by the previous example, one could also consider maximum likelihood estimators of generalized linear models. The related loss function then takes the form $\loss(v,y) = -y \cdot v + \Phi(v)$, where $\Phi\colon \R \to \R$ denotes the \define{link function} of the model; see \cite[Sec.~4.4]{negahban2012unified} for more details.
	\qeddiamond
\end{rmklist}
\end{example}

It has been repeatedly pointed out that the model parameters $\scalfac$, $\modeldev$, and $\modeldeveta$ are measuring how well the loss function and the (non-)linear model interact with each other. There are in fact cases where this interplay is so ``bad'' that the defining equation \eqref{eq:results:recovery:modelparam:scalfac} for $\scalfac$ cannot be solved, and our theorems are not applicable anymore. The following example shows such an ``incompatible'' pair of $\loss$ and $\fobs$.
\begin{example}
Suppose that we are trying to fit noiseless $1$-bit observations $y_i = \sign(\sp{\a_i}{\grtr})$ with the logistic loss $\loss = \loss^{\text{log}}$ defined in \eqref{eq:consequences:loss:logloss}. Then, for any $\lambda > 0$, we have
\begin{equation}
\lossemp[\y](\lambda\grtr) = \tfrac{1}{m}\sum_{i = 1}^m \big( - \lambda \abs{\sp{\a_i}{\grtr}} + \log(1+\exp(- \lambda \abs{\sp{\a_i}{\grtr}})) \big),
\end{equation}
implying that the empirical loss can become arbitrarily small. Therefore, the estimator \eqref{eq:intro:estimator} is only guaranteed to have a minimizer $\solu$ if $\sset$ is compact. But even then we cannot hope for a recovery of $\grtr$, although $\solu$ probably provides a perfect classification. This example particularly illustrates that approximating the ground-truth signal $\grtr$ is a much more difficult task than just finding a good classifier for our model.
And interestingly, it also defies conventional
wisdom of using logistic regression as the standard method for a binary output scheme. \qeddiamond
\end{example}

\subsection{Descent Cones and the Convex Geometry of Theorem \ref{thm:results:recovery}}
\label{subsec:consequences:convexgeo}

In this part, we shall briefly discuss the underlying geometric ideas of our approach.
For the sake of simplicity, let us assume that we are working in the noiseless linear regime, i.e., $y_i = \sp{\a_i}{\grtr}$, or in short $\y = \yadv = \A \grtr$. We would like to perform the $\sset$-Lasso \eqref{eq:intro:klasso} for some convex signal set $\sset \subset \R^n$ with $\grtr \in \sset$. Then, $\grtr$ is obviously a minimizer of \eqref{eq:intro:klasso} and we might ask when it is unique, meaning that the recovery is exact. It is not hard to see (cf. \cite[Sec.~1.9.1]{vershynin2014estimation}) that this is the case if and only if
\begin{equation}\label{eq:consequences:convexgeo:exactrecov}
	(\sset - \grtr) \intersec \ker\A = \{ \vec{0} \}.
\end{equation}
Since $\sset$ is convex, this condition can be easily rewritten in terms of the so-called \define{descent cone} of $\sset$ at $\grtr$, given by\footnote{Note that $\cone{\sset}{\grtr}$ might not be a closed set even when $\sset$ is closed. As an example, consider $\sset = \ball[2][n]$ and $\grtr = (1, 0, \dots, 0)$.}
\begin{equation}
	\cone{\sset}{\grtr} := \{\lambda (\x - \grtr) \suchthat \x \in \sset, \ \lambda \geq 0 \}.
\end{equation}
Indeed, \eqref{eq:consequences:convexgeo:exactrecov} is equivalent to
\begin{equation}
	\cone{\sset}{\grtr} \intersec \ker\A = \{ \vec{0} \}.
\end{equation}
This observation originates from \cite[Prop.~2.1]{chandrasekaran2012geometry}; see Figure~\ref{fig:consequences:convexgeo:descentcone} for a visualization.
Since the matrix $\A \in \R^{m \times n}$ is i.i.d. Gaussian, $\ker\A$ can be identified with an $(n-m)$-dimensional random subspace drawn from the Grassmanian $G(n,n-m)$. The problem of exact recovery can be therefore reduced to the question of when a random subspace of codimension $m$ trivially intersects the descent cone $\cone{\sset}{\grtr}$. Heuristically, the latter criterion should be met with high probability if the cone $\cone{\sset}{\grtr}$ is relatively ``narrow.'' This important geometrical insight was formalized in \cite{chandrasekaran2012geometry} by Gordon's escape, leading to a sufficient condition of the form
\begin{equation}\label{eq:consequences:convexgeo:escape}
m \gtrsim \effdim[t]{\cone{\sset}{\grtr}} = \meanwidth[1]{\cone{\sset}{\grtr}}^2,
\end{equation}
where we have used that the effective dimension of cones is independent from the scale $t$. This particularly underpins our intuition that the problem becomes ``easier'' to solve as the codimension of the random subspace grows or the cone becomes more narrow (measured in terms of the mean width).
Remarkably, \eqref{eq:consequences:convexgeo:escape} precisely coincides with our findings of Example~\ref{ex:consequences:model}\ref{ex:consequences:model:linear}.

\begin{figure}
	\centering
	\begin{subfigure}[t]{0.35\textwidth}
		\centering
		\includegraphics[width=\textwidth]{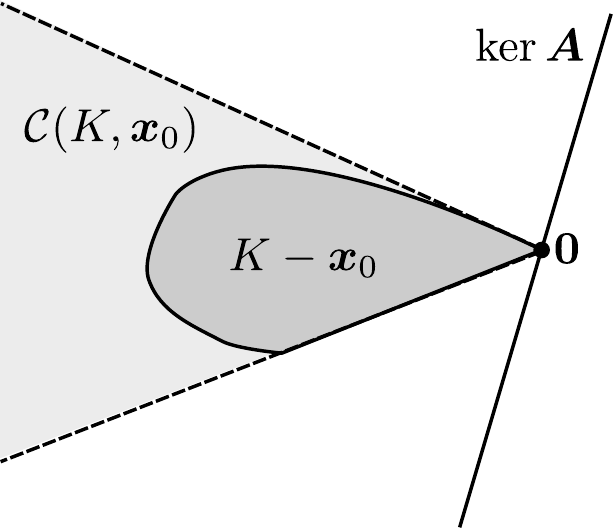}
		\caption{}
		\label{fig:consequences:convexgeo:descentcone}
	\end{subfigure}%
	\qquad\qquad
	\begin{subfigure}[t]{0.35\textwidth}
		\centering
		\includegraphics[width=\textwidth]{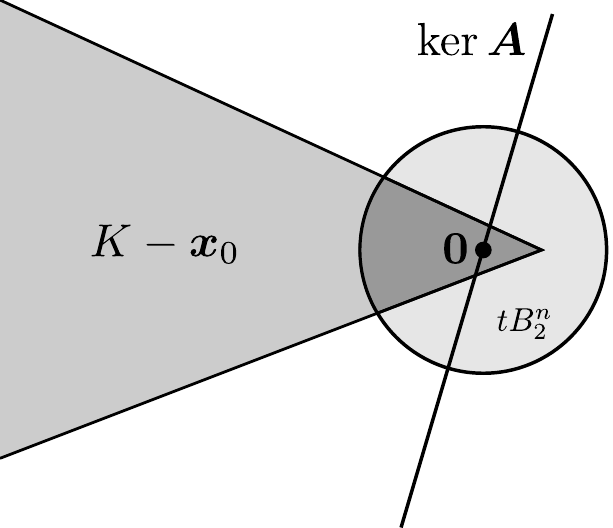}
		\caption{}
		\label{fig:consequences:convexgeo:localmeanwidth}
	\end{subfigure}%
	\caption{\subref{fig:consequences:convexgeo:descentcone} The convex geometry of exact recovery. Note that everything is ``centered'' around $\vec{0}$, since we have subtracted $\grtr$. \subref{fig:consequences:convexgeo:localmeanwidth} Situation where $\vec{0}$ is not exactly located at the apex of $\sset - \grtr$. The effective dimension is computed over the intersection of $\sset - \grtr$ and $t\ball[2][n]$. For simplicity, we visualize the signal set by a cone, but in general, it could be any convex set.}
	\label{fig:consequences:convexgeo}
\end{figure}

However, it could happen that $\grtr$ does not lie exactly at the boundary of $\sset$, or even worse, the signal set has ``smoothed vertices''---think of $\sset = \sqrt{s}\ball[1][n] \intersec \ball[2][n]$ for instance. We would essentially have 
\begin{equation}
	\effdim[1]{\cone{\sset}{\grtr}} = \meanwidth[1]{\cone{\sset}{\grtr}}^2 \asymp n
\end{equation}
in these cases,\footnote{We assume that $\sset$ has full algebraic dimension here.} and the condition \eqref{eq:consequences:convexgeo:escape} becomes meaningless. Fortunately, it has turned out in \cite{plan2014highdim,plan2015lasso} that the \emph{local} mean width allows us to overcome this drawback. In order to get a basic understanding for this approach, let us consider Figure~\ref{fig:consequences:convexgeo:localmeanwidth}, where $\grtr$ is relatively closely located to the apex of $\sset$. 
Again, the intersection between $\sset - \grtr$ and $\ker\A$ could be very small if $\sset$ is narrow, which would still lead to an accurate (maybe not exact) estimate of $\grtr$ by \eqref{eq:intro:klasso}.
But how to measure the size of $\sset$ in this situation? The key idea is that, if the scale $t > 0$ is sufficiently large, the value of
\begin{equation}
\effdim[t]{\sset - \grtr} = \Big(\tfrac{1}{t} \cdot \mean[(\sup_{\x \in (\sset - \grtr)\intersec t\ball[2][n]} \sp{\gaussian}{\x}]\Big)^2
\end{equation}
is approximately equal to the (local) effective dimension in the case where $\grtr$ lies \emph{exactly} at the apex of $\sset$. For that reason, one may apply the same argumentation as above, but without the necessity of forming a descent cone (which could be the entire $\R^n$).
According to the setup of Theorem~\ref{thm:results:recovery}, the scaling parameter $t$ can be also interpreted as error accuracy, since it essentially bounds the size of the set of potential minimizers $\sset\intersec(\ker\A + \grtr)$.
On the other hand, if the desired ``resolution'' is too high, i.e., $t \to 0$, the scaled ball $t\ball[2][n]$ might be completely contained in the interior of $\sset - \grtr$, so that we may end up with $\effdim[t]{\sset - \grtr} \asymp n$ at some point.
This transition behavior opens up a new (geometric) perspective of the recovery guarantee in Theorem~\ref{thm:results:recovery} and particularly indicates that the local mean width is indeed an appropriate refinement of the classical ``conic mean width.''

\section{Conclusion and Outlook}
\label{sec:conclusion}

Our main results, Theorem~\ref{thm:results:recovery}, Theorem~\ref{thm:results:rsc}, and Theorem~\ref{thm:results:aniso}, have shown that high-dimensional estimation is possible under fairly general assumptions, including non-linear observations with adversarial noise, strictly convex loss functions as well as arbitrary convex signal sets. 
In this setup, perhaps the most remarkable conclusion was that these guarantees can be already achieved with standard estimators that do not rely on any knowledge of the non-linear and noisy output rule.
We have seen that the actual capability of recovering a signal is essentially based on the following key concepts, originating from the works of \cite{negahban2009unified,negahban2012unified,plan2015lasso,plan2014highdim}, respectively:
\begin{thmlist}
\item
	The \emph{(local) mean width}, capturing the complexity of the signal class.
\item
	\emph{Restricted strong convexity} (\emph{RSC}), allowing us to turn a statement on consistent prediction into a statement on signal approximation.
\item
	The \emph{model parameters} $\scalfac$, $\modeldev$, $\modeldeveta$, quantifying the uncertainty caused by a (random) non-linear perturbation of linear measurements.
\end{thmlist}
Let us conclude this section with a brief listing of potential extensions of our framework and some open questions, which could be considered for future work:
\begin{listing}
\item
	\emph{Non-Gaussian measurements.} The works of Tropp \cite{tropp2014convex} and Mendelson \cite{mendelson2014learning,mendelson2014learninggeneral} go beyond the assumption of Gaussian random vectors. 
	Therefore, one might wonder whether our setting could be also extended to more general distributions, e.g., \define{sub-Gaussians}.
	Some arguments of our proofs can be easily adapted (see also \cite[Prop.~2.5.1, Thm.~2.6.3]{tropp2014convex}), but there are other steps whose generalization seems to be more involving, for example, a non-linear observation model with sub-Gaussian measurements (cf. \cite{ai2014onebitsubgauss}).
\item
	\emph{More general convex loss functions.} A popular example which is not included in our setting are \define{support vector machines} (SVMs) based on the \define{hinge loss} $\loss^\text{hgn}(v,y) := \max\{0, 1 - v \cdot y\}$. Although $\loss^\text{hgn}$ obviously does not satisfy RSC, some recovery guarantees for sparse vectors can be still proven; see \cite{kolleck2015l1svm} for instance. Thus, we might ask for a generalization of the RSC-condition from Definition~\ref{def:results:rsc}.
	One promising strategy could be a replacement of the Euclidean norm in \eqref{eq:results:rsc:rsc} by a different measure of distance, possibly adapted to the used loss function.
	
	Apart from that, the sufficient conditions for RSC in Theorem~\ref{thm:results:rsc} might be further relaxed. 
	Especially some restrictions on the second input variable of $(v,y) \mapsto \loss(v,y)$ could be dropped, since all regularity assumptions are imposed with respect to the first variable.
	Furthermore, it should be also possible to allow for piece-wise smooth functions: Such an improvement could be achieved by first using a smooth approximation of the loss in the proof and taking a uniform limit afterwards.
\item
	\emph{``Optimal'' loss functions.} In Section~\ref{sec:consequences}, we have discussed a few common examples of loss functions, but practitioners often have to face the following challenge: Supposed that we have some approximate knowledge of the true observation scheme, what is an (almost) optimal choice of $\loss$? 
	A particular difficulty here is to introduce a reasonable notion of optimality.
	As a first step, one could try to construct a loss function such that $\scalfac$ becomes as large as possible compared to $\modeldev^2$ and $\modeldeveta^2$. This would ensure a good \emph{signal-to-noise ratio}, since the estimator \eqref{eq:intro:estimator} measures residuals in terms of $\loss$ (see also the discussion part of Theorem~\ref{thm:results:recovery}).
	However, it is unclear whether this strategy really leads to the ``best possible'' loss function.
	
	Another important issue concerns the robustness against model inaccuracies. For instance, one might ask what would happen if the output actually obeys a probit model while the logistic loss is used for estimation. It would be interesting to study how sensitive our estimator is to such a type of mismatch. Moreover, one could develop some general rules-of-thumb indicating when $\fobs$ and $\loss$ are particularly ``compatible.''
\item
	Instead of a constrained estimator, we could consider a \emph{regularized} version of it, such as in \cite{negahban2009unified,negahban2012unified}. This might be more appealing for computational purposes, and we strongly believe that similar recovery results can be shown if the corresponding regularization parameter is appropriately chosen.
\item
	Finally, one could think of extending the single-index model \ref{assump:model:nonlinmod} to a \define{multi-index model} of the form
	\begin{equation}
		y_i = \fobs(\sp{\a_i}{\grtr^{(1)}}, \sp{\a_i}{\grtr^{(2)}}, \dots, \sp{\a_i}{\grtr^{(N)}}), \quad i = 1,\dots, m.
	\end{equation}
	The major goal is then to find natural conditions on the output function $\fobs \colon \R^N \to \Y$ for which efficient recovery of the unknown signals $\grtr^{(1)}, \dots, \grtr^{(N)} \in \R^n$ is feasible.
	This challenge is also closely related to the theory of \define{generalized ridge functions}; see \cite{cohen2012ridge,fornasier2012ridge} for example.
\end{listing}

\section{Proofs of the Main Results}
\label{sec:proof}

\subsection{Proof of Theorem \ref{thm:intro:recovery}}
\label{subsec:proof:introrecovery}

\begin{proof}[Proof of Theorem~\ref{thm:intro:recovery}]
We would like to apply Theorem~\ref{thm:results:aniso} for $\Covmatr = \I$ and
\begin{equation}
t = C' \Big( \Big(\frac{\meanwidth{\sset - \grtrmu}^2}{m}\Big)^{1/4} + \advdev \Big),
\end{equation}
where $C' > 0$ is a constant which is chosen later. First, note that the assumption \ref{thm:results:rsc:strongconvexity} is ensured by Corollary~\ref{cor:results:rsc:strictlyconvex}. Next, one observes that
\begin{equation}\label{eq:proof:introrecovery:effdim}
\effdim[t]{\sset - \grtrmu} = \frac{1}{t^2} \cdot \meanwidth[t]{\sset - \grtrmu}^2 \stackrel{\eqref{eq:results:signal:localglobal}}{\leq} \frac{1}{t^2} \cdot \meanwidth{\sset - \grtrmu}^2 \leq \frac{1}{(C')^2} \cdot \sqrt{m} \cdot \meanwidth{\sset - \grtrmu}.
\end{equation}
Together with the assumption \eqref{eq:intro:recovery:meascount} for some $C \geq C_1^2/(C')^4 $, we obtain $m \geq C_1 \effdim[t]{\sset - \grtrmu}$.\footnote{In this proof, $C_1, C_2 > 0$ denote the constants from the statement of Theorem~\ref{thm:results:aniso}.} Finally, we can choose $C' = C'' \cdot \max\{1, \modeldev, \modeldeveta\}$, with some $C'' > 0$ depending on $C_2$, such that
\begin{align}
	C_2 \cdot \accuracy &= C_2 \Big(\frac{\modeldev \cdot \sqrt{\effdim[t]{\sset - \grtrmu}} + \modeldeveta}{\sqrt{m}} + \advdev\Big) < (C')^2 \cdot \frac{\sqrt{\effdim[t]{\sset - \grtrmu}}}{\sqrt{m}} + C' \cdot \advdev \\
	&\stackrel{\eqref{eq:proof:introrecovery:effdim}}{\leq} C' \Big(\frac{\meanwidth{\sset - \grtrmu}^2}{m}\Big)^{1/4} + C' \cdot \advdev  = t.
\end{align}
The claim is now implied by Theorem~\ref{thm:results:aniso}. Note that, since $C'' \leq C'' \cdot \max\{1, \modeldev, \modeldeveta\} = C'$, we can always enlarge $C''$ and $C$ such that $C'' = C$.
\end{proof}

\subsection{Proof of Theorem \ref{thm:results:recovery}}
\label{subsec:proof:recovery}

Throughout this subsection, we shall assume that the assumptions of Theorem~\ref{thm:results:recovery} hold true.
At first, let us recall that the generalized estimator \eqref{eq:intro:estimator} can be written in terms of the empirical loss function:
\begin{equation}
	\min_{\x \in \R^n} \lossemp[\yadv](\x) \quad \text{subject to $\x \in \sset$,} \label{eq:proof:estimator-emp}\tag{$P_{\loss, \sset}^\text{emp}$}
\end{equation}
where $\lossemp[\yadv](\x) = \tfrac{1}{m} \sum_{i = 1}^m \loss(\sp{\a_i}{\x}, \tilde{y}_i)$. The basis for our analysis is the first order error approximation of $\lossemp[\yadv](\x)$ at $\grtrmu$ (see also \eqref{eq:results:recovery:taylor}):
\begin{equation}\label{eq:proof:recovery:taylor}
	\losstaylor{\x}{\grtrmu}{\yadv} = \lossemp[\yadv](\x) - \lossemp[\yadv](\grtrmu) - \sp{\gradient\lossemp[\yadv](\grtrmu)}{\x - \grtrmu}.
\end{equation}
By the assumption of RSC, we may estimate $\losstaylor{\x}{\grtrmu}{\yadv}$ from below by $\lnorm{\x - \grtrmu}^2$ for all $\x \in \sset \intersec (t \S^{n-1} + \grtrmu)$. 
Hence, in order to control this distance, we need to find an appropriate upper bound for the right-hand side of \eqref{eq:proof:recovery:taylor}. For this, let us introduce two vectors $\z = (z_1, \dots, z_m)$ and $\zadv = (\tilde{z}_1, \dots, \tilde{z}_m)$ with
\begin{align}
	z_i &:= \loss'(\sp{\a_i}{\grtrmu}, y_i), \quad i \in [m],\\
	\tilde{z}_i &:= \loss'(\sp{\a_i}{\grtrmu}, \tilde{y}_i), \quad i \in [m].
\end{align}
\begin{remark}[Key idea of the proof, cf. {\cite[Sec.~I.E]{plan2015lasso}}]\label{rmk:proof:recover:intuitionz}
The purpose of these two ``residual'' vectors becomes clearer when considering the case of the square loss $\loss = \losssq$. Here, we simply have
\begin{equation}
\y = \scalfac \A \grtr - \z \quad \text{and} \quad \yadv = \scalfac \A \grtr - \zadv,
\end{equation}
meaning that $\z$ (and $\zadv$) describe the deviation of the non-linear model (with adversarial noise) from the underlying linear measurement process. Although $\z$ might not have mean zero and could depend on $\A$, it can be still regarded as a special type of ``noise.'' In fact, the scaling parameter $\scalfac$ was precisely chosen such that $\z$ and $\A$ are uncorrelated, i.e., $\mean[\A^\T \z] = \vec{0}$ (see \cite[Eq.~(IV.1)]{plan2015lasso}).
Later on (in Lemma~\ref{lem:proof:recovery:residual}), we will see that this observation is a crucial step towards proving our error bounds. \qeddiamond
\end{remark}
Next, we ``isolate'' the adversarial noise term in \eqref{eq:proof:recovery:taylor}:
\begin{align}
	\losstaylor{\x}{\grtrmu}{\yadv} ={} & \lossemp[\yadv](\x) - \lossemp[\yadv](\grtrmu) - \sp{\gradient\lossemp[\yadv](\grtrmu)}{\x - \grtrmu} \\
	={} & \lossemp[\yadv](\x) - \lossemp[\yadv](\grtrmu) - \tfrac{1}{m} \sp{ \A^\T \zadv}{\x - \grtrmu}\\
	={} & \lossemp[\yadv](\x) - \lossemp[\yadv](\grtrmu) - \tfrac{1}{m} \sp{ \A^\T \z}{\x - \grtrmu} + \tfrac{1}{m} \sp{ \A^\T (\z - \zadv)}{\x - \grtrmu} \\
	={} & \lossemp[\yadv](\x) - \lossemp[\yadv](\grtrmu) + \underbrace{\tfrac{1}{m} \sp{ \A^\T \z}{\grtrmu-\x}}_{=:T_1} + \underbrace{\sp{\z - \zadv}{\tfrac{1}{m}\A(\x - \grtrmu)}}_{=: T_2}. \label{eq:proof:recovery:basiceq}
\end{align}
This \emph{basic equation} forms the starting point of the proof of Theorem~\ref{thm:results:recovery}.
In order to \emph{uniformly} bound the terms $T_1$ and $T_2$, we provide two lemmas, which are proven at the end of this subsection:

\begin{lemma}\label{lem:proof:recovery:residual}
Let $L \subset t \ball[2][n]$ be any subset. Then
\begin{equation}
	\mean[\sup_{\h \in L} \sp{\A^\T \z}{\h}] \leq (\modeldev\cdot \meanwidth{L} + t \cdot \modeldeveta) \cdot \sqrt{m}.
\end{equation}
\end{lemma}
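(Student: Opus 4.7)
The plan is to orthogonally decompose the Gaussian rows of $\A$ along the signal direction so that the ``residual'' variables $z_i$ become \emph{independent} of the orthogonal component. Since $\lnorm{\grtr} = 1$, write $\a_i = g_i \grtr + \vec{p}_i$ with $g_i := \sp{\a_i}{\grtr} \distributed \Normdistr{0}{1}$ and $\vec{p}_i := (\I - \grtr\grtr^\T) \a_i$, where $g_i$ and $\vec{p}_i$ are independent. Because $y_i = \fobs(g_i)$, the residual $z_i = \loss'(\scalfac g_i, \fobs(g_i))$ depends only on $g_i$ and is therefore independent of $\vec{p}_i$. This yields the split
\begin{equation*}
\A^\T \z = \Big(\sum_{i=1}^m g_i z_i\Big) \grtr + \sum_{i=1}^m z_i \vec{p}_i,
\end{equation*}
and I would analyse the two resulting contributions separately, bounding the supremum of the sum by the sum of the suprema.

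For the ``parallel'' piece, the bound $\abs{\sp{\grtr}{\h}} \leq t$ for $\h \in L \subset t \ball[2][n]$ gives a contribution of at most $t \cdot \abs{\sum_i g_i z_i}$. The decisive observation is that the defining equation \eqref{eq:results:recovery:modelparam:scalfac} for $\scalfac$ is precisely $\mean[g_i z_i] = 0$, so $\sum_i g_i z_i$ is a sum of i.i.d.\ centered random variables with variance $\modeldeveta^2$ (by \eqref{eq:results:recovery:modelparam:modeldeveta}). Jensen's inequality then yields $\mean[\abs{\sum_i g_i z_i}] \leq \sqrt{m}\,\modeldeveta$, producing the $t\,\modeldeveta\,\sqrt{m}$ term of the claim.

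For the ``orthogonal'' piece I would condition on $\z$. Using that $\{\vec{p}_i\}$ is independent of $\{z_i\}$, the vector $\sum_i z_i \vec{p}_i$ is conditionally Gaussian with covariance $\lnorm{\z}^2 (\I - \grtr\grtr^\T)$, so that $\sp{\sum_i z_i \vec{p}_i}{\h}$ has the same distribution as $\lnorm{\z} \cdot \sp{\gaussian}{(\I - \grtr\grtr^\T) \h}$ for $\gaussian \distributed \Normdistr{\vec{0}}{\I}$. Because the orthogonal projection is a contraction, the induced Gaussian process on $L$ has $L^2$-increments dominated by those of $\h \mapsto \sp{\gaussian}{\h}$, so the Sudakov--Fernique (equivalently, Slepian) inequality gives $\mean[\sup_{\h \in L} \sp{\gaussian}{(\I - \grtr\grtr^\T)\h}] \leq \meanwidth{L}$. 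Taking total expectation, invoking Jensen once more and recalling $\mean[\lnorm{\z}^2] = m\,\modeldev^2$ (from \eqref{eq:results:recovery:modelparam:modeldev}), this term is bounded by $\modeldev \cdot \meanwidth{L} \cdot \sqrt{m}$, and adding the two contributions yields the stated inequality.

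The main conceptual step is recognising that the definition of $\scalfac$ is exactly what makes $g_i z_i$ centered, which is what forces the decomposition along $\grtr$ to be the ``right'' one; once seen, everything else is a routine Gaussian comparison on the orthogonal complement of $\grtr$ combined with Jensen. A minor technical point is verifying that the Sudakov--Fernique hypothesis is met with both processes centered and indexed over the same set $L$, which is immediate here.
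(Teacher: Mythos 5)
Your proof is correct and uses essentially the same decomposition as the paper: split $\a_i$ along $\grtr$ and its orthogonal complement, use the defining equation for $\scalfac$ to show $\mean[g_i z_i]=0$ and $\mean[(g_i z_i)^2]=\modeldeveta^2$ for the parallel piece, and exploit the independence of $z_i = \loss'(\scalfac g_i, \fobs(g_i))$ from the orthogonal component for the remaining term. The one place you diverge technically is the orthogonal contribution. You condition on $\z$, observe that $\sum_i z_i \vec{p}_i$ is Gaussian with covariance $\lnorm{\z}^2(\I - \grtr\grtr^\T)\preceq \lnorm{\z}^2\I$, and invoke Sudakov--Fernique to dominate the expected supremum by $\lnorm{\z}\,\meanwidth{L}$. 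The paper reaches the same bound without a comparison theorem: it replaces $\A$ by an independent copy $\Aind$ (legitimate because $\orthcompl{\proj}\A^\T$ is independent of $\z$), uses $\mean[\proj\Aind^\T]=\vec{0}$, and applies Jensen to the inner expectation over $\proj\Aind^\T$ to get
\begin{equation}
\mean\normpol{\orthcompl{\proj}\Aind^\T\z}{L}\leq\mean\normpol{\Aind^\T\z}{L},
\end{equation}
after which conditioning on $\z$ gives $\Aind^\T\z\distributed\lnorm{\z}\gaussian$ directly. Both routes yield identical bounds; yours is the standard Gaussian-comparison step, the paper's is a slightly more elementary conditioning trick that avoids the comparison machinery. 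One minor imprecision in your writeup: the inequality you need is Sudakov--Fernique (increment domination for centered Gaussians), not Slepian, whose hypotheses ask for matching variances; the two are not equivalent, though here Sudakov--Fernique is exactly what applies.
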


\begin{lemma}\label{lem:proof:recovery:noise}
Let $L \subset t \ball[2][n]$ be any subset. Then, there exists a numerical constant $C > 0$ such that
\begin{equation}
	\sup_{\h \in L} \tfrac{1}{\sqrt{m}} \lnorm{\A\h} \leq C \cdot \Big(t + \frac{\meanwidth{L}}{\sqrt{m}}\Big)
\end{equation}
with probability at least $1 - \exp(-m)$.
\end{lemma}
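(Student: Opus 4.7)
The plan is to bound $\sup_{\h \in L}\lnorm{\A\h}$ in expectation and then invoke Gaussian concentration. Writing $\lnorm{\A\h} = \sup_{\vec{u}\in S^{m-1}}\sp{\A\h}{\vec{u}}$, the target quantity becomes the supremum of the centered Gaussian process $X_{\h,\vec{u}} := \sp{\A\h}{\vec{u}}$ indexed by $T := L\times S^{m-1}$.

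For the expectation I would apply the Sudakov--Fernique inequality with the comparison process $Y_{\h,\vec{u}} := \sp{\gaussian}{\h} + \lnorm{\h}\sp{\vec{\gamma}}{\vec{u}}$, where $\gaussian \distributed \Normdistr{\vec{0}}{\vec{I}_n}$ and $\vec{\gamma} \distributed \Normdistr{\vec{0}}{\vec{I}_m}$ are independent standard Gaussians. A direct computation of the incremental variances yields
$$\mean[(Y_{\h,\vec{u}} - Y_{\h',\vec{u}'})^2] - \mean[(X_{\h,\vec{u}} - X_{\h',\vec{u}'})^2] = (\lnorm{\h}-\lnorm{\h'})^2 + 2(\lnorm{\h}\lnorm{\h'}-\sp{\h}{\h'})(1-\sp{\vec{u}}{\vec{u}'}) \geq 0,$$
where both summands are nonnegative by Cauchy--Schwarz and $\sp{\vec{u}}{\vec{u}'}\leq 1$. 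Thus $Y$ dominates $X$, and since $\sup_{(\h,\vec{u})\in T} Y_{\h,\vec{u}} = \sup_{\h\in L}(\sp{\gaussian}{\h} + \lnorm{\h}\lnorm{\vec{\gamma}})$, together with $L\subset t\ball[2][n]$ and $\mean[\lnorm{\vec{\gamma}}]\leq \sqrt{m}$, one obtains
$$\mean\Big[\sup_{\h\in L}\lnorm{\A\h}\Big] \leq \meanwidth{L} + t\sqrt{m}.$$

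For the deviation I would use Gaussian concentration. For fixed $\h$ the map $\A \mapsto \lnorm{\A\h}$ is $\lnorm{\h}$-Lipschitz in the Frobenius norm on $\R^{m\times n}$, hence $F(\A) := \sup_{\h\in L}\lnorm{\A\h}$ is $t$-Lipschitz. The standard Gaussian concentration inequality then gives
$$\prob\big[F(\A) - \mean[F(\A)] > s\big] \leq \exp\big(-s^2/(2t^2)\big) \qquad \text{for all $s \geq 0$,}$$
and selecting $s := t\sqrt{2m}$ produces an event of probability at least $1-\exp(-m)$ on which $\sup_{\h\in L}\lnorm{\A\h} \leq \meanwidth{L} + (1+\sqrt{2})\sqrt{m}\cdot t$. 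Dividing by $\sqrt{m}$ yields the claim with $C := 1+\sqrt{2}$. The only non-routine step is the variance comparison needed to invoke Sudakov--Fernique; the Lipschitz estimate and the concentration inequality itself are entirely standard.
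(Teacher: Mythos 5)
Your proof is correct. The algebra for the Sudakov--Fernique comparison checks out: both
$(\lnorm{\h}-\lnorm{\h'})^2$ and $2(\lnorm{\h}\lnorm{\h'}-\sp{\h}{\h'})(1-\sp{\vec{u}}{\vec{u}'})$ are nonnegative, so the auxiliary process $Y$ dominates $X = \sp{\A\,\cdot\,}{\cdot}$, and the resulting Chevet-type bound $\mean\bigl[\sup_{\h\in L}\lnorm{\A\h}\bigr] \leq \meanwidth{L} + t\sqrt{m}$ is exactly what the Lipschitz concentration step needs. The Frobenius-norm Lipschitz constant $t$ is also right, since $L\subset t\ball[2][n]$.

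The route differs from the paper's. The paper invokes a single packaged result --- a two-sided uniform deviation inequality for (sub-)Gaussian matrices on geometric sets, namely Theorem~1.3 of Liaw et al. (\texttt{liaw2016randommat}) --- which directly bounds $\sup_{\h\in L}\abs{\tfrac{1}{\sqrt{m}}\lnorm{\A\h} - \lnorm{\h}}$, and then specializes it with $u=\sqrt{m}$. You instead rebuild the needed one-sided estimate from first principles, separating it into an expectation bound via the Sudakov--Fernique (Chevet) comparison and a tail bound via Gaussian Lipschitz concentration. Your argument is more self-contained and yields an explicit small constant ($C=1+\sqrt{2}$), at the cost of only applying to the Gaussian case; the cited black box is slightly heavier machinery but also covers sub-Gaussian matrices and gives the two-sided bound, neither of which is actually needed here. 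In short: same circle of ideas, but you unpack the black box.
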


Now, we are ready to prove Theorem~\ref{thm:results:recovery}:
\begin{proof}[Proof of Theorem \ref{thm:results:recovery}]
\begin{proofsteps}
\item
At first, let us fix a vector $\x \in \sset \intersec (t \S^{n-1} + \grtrmu)$, which particularly means that $\lnorm{\x - \grtrmu} = t$.
Note that w.l.o.g., we can assume that $\sset \intersec (t \S^{n-1} + \grtrmu) \neq \emptyset$ because otherwise, the convexity of $\sset$ would automatically imply that $\lnorm{\x - \grtrmu} \leq t$ for  all $\x \in \sset$.
In the following two steps, we establish bounds for $T_1$ and $T_2$:

\item
\emph{Bounding $T_1$:}
For this, we apply Lemma~\ref{lem:proof:recovery:residual} with $L = (\grtrmu - \sset) \intersec t \ball[2][n]$. 
Since $\grtrmu - \x \in L$, Markov's inequality yields
\begin{equation}
T_1 = \tfrac{1}{m} \sp{ \A^\T \z}{\grtrmu - \x} \leq C_1 \cdot \frac{\modeldev\cdot \meanwidth{L} + t \cdot \modeldeveta}{\sqrt{m}} = C_1 \cdot \frac{\modeldev\cdot \meanwidth[t]{\sset - \grtrmu} + t \cdot \modeldeveta}{\sqrt{m}} \label{eq:proof:recovery:boundT1}
\end{equation}
with high probability, where the constant $C_1 > 0$ only depends on the probability of success.
\item
\emph{Bounding $T_2$:} Let us recall the model assumption \ref{assump:model:advnoise} and the definitions of $\z$ and $\zadv$. Using the Lipschitz continuity of $\loss'$ (cf. \ref{assump:estimator:difflip}), we conclude
\begin{align}
\lnorm{\z - \zadv}^2 &= \sum_{i = 1}^m \abs{\loss'(\sp{\a_i}{\grtrmu}, y_i) - \loss'(\sp{\a_i}{\grtrmu}, \tilde{y}_i)}^2 \\
&\leq C_{\loss'}^2 \cdot \sum_{i = 1}^m \abs{y_i - \tilde{y}_i}^2 = C_{\loss'}^2 \cdot \lnorm{\y - \yadv}^2.
\end{align}
Thus, the Cauchy-Schwarz inequality implies
\begin{align}
T_2 &= \sp{\z - \zadv}{\tfrac{1}{m}\A(\x - \grtrmu)} \leq \tfrac{1}{\sqrt{m}} \lnorm{\z - \zadv} \cdot \tfrac{1}{\sqrt{m}} \lnorm{\A(\x - \grtrmu)} \\
&\leq C_{\loss'} \cdot \underbrace{\tfrac{1}{\sqrt{m}}\lnorm{\y - \yadv}}_{= \advdev} \cdot \tfrac{1}{\sqrt{m}} \lnorm{\A(\x - \grtrmu)}.
\end{align}
Finally, we can invoke the bound of Lemma~\ref{lem:proof:recovery:noise} with $L = (\sset - \grtrmu) \intersec t \ball[2][n]$, which leads to
\begin{equation}
T_2 \leq C_{\loss'} \cdot \advdev \cdot C \cdot \Big(t + \underbrace{\frac{\meanwidth[t]{K - \grtrmu}}{\sqrt{m}}}_{\stackrel{\eqref{eq:results:recovery:measurements}}{\leq} t}\Big) \leq C_2 \cdot t \cdot \advdev \label{eq:proof:recovery:boundT2}
\end{equation}
with high probability, where the constant $C_2 > 0$ only depends on (the Lipschitz constant of) $\loss'$. 
\item
Now, let us assume that the events of \eqref{eq:proof:recovery:boundT1} and \eqref{eq:proof:recovery:boundT2} have indeed occurred. Together with the RSC at scale $t$ (with RSC-constant $C_3 > 0$), we end up with the following  estimate for the basic equation \eqref{eq:proof:recovery:basiceq}:
\begin{align}
	C_3 \cdot t^2 &= C_3 \lnorm{\x - \grtrmu}^2 \leq \losstaylor{\x}{\grtrmu}{\yadv} = \lossemp[\yadv](\x) - \lossemp[\yadv](\grtrmu) + T_1 + T_2 \\
	&\leq \lossemp[\yadv](\x) - \lossemp[\yadv](\grtrmu) + C_1 \cdot \frac{\modeldev\cdot \meanwidth[t]{K - \grtrmu} + t \cdot \modeldeveta}{\sqrt{m}} + C_2 \cdot t \cdot \advdev \\
	& \leq \lossemp[\yadv](\x) - \lossemp[\yadv](\grtrmu) + \max\{C_1, C_2\} \cdot t \cdot \underbrace{\Big( \frac{\modeldev\cdot \sqrt{\effdim[t]{K - \grtrmu}} + \modeldeveta}{\sqrt{m}} + \advdev \Big)}_{= \accuracy}.
\end{align}
By choosing $C = \max\{C_1, C_2\} / C_3$ in \eqref{eq:results:recovery:accuracy}, we finally obtain
\begin{equation}
0 < t (C_3 \cdot t - \max\{C_1, C_2\}\cdot \accuracy) \leq \lossemp[\yadv](\x) - \lossemp[\yadv](\grtrmu) =: \score(\x).
\end{equation}
This particularly implies that $\score(\x) > 0$ for all $\x \in \sset \intersec (t \S^{n-1} + \grtrmu)$. On the other hand, any minimizer $\solu \in \sset$ of \eqref{eq:proof:estimator-emp} satisfies $\score(\solu) \leq 0$. Hence, if we would have $\lnorm{\solu - \grtrmu} > t$, there would exist (by convexity of $\sset$) some $\x' \in \sset \intersec (t \S^{n-1} + \grtrmu)$ such that $\solu \in \{ \grtrmu + \lambda \x' \suchthat \lambda > t \}$. But this already contradicts the fact that $\x \mapsto \score(\x)$ is a convex functional, since it holds $\score(\solu) \leq 0$, $\score(\x') > 0$, and $\score(\grtrmu) = 0$. Consequently, we have $\lnorm{\solu - \grtrmu} \leq t$, which proves the claim. \qedhere
\end{proofsteps}
\end{proof}

It remains to prove Lemma~\ref{lem:proof:recovery:residual} and Lemma~\ref{lem:proof:recovery:noise}:
\begin{proof}[Proof of Lemma \ref{lem:proof:recovery:residual}]
This proof is literally adapted from \cite[Lem.~4.3]{plan2015lasso}, but to keep this work self-contained, we provide the details here. As already indicated in Remark~\ref{rmk:proof:recover:intuitionz}, we would like to ``decouple'' $\z$ and $\A$ as much as possible. For that purpose, we consider the projection $\proj := \grtr \grtr^\T$ onto the span of the ground-truth signal $\grtr$ as well as the projection $\orthcompl{\proj} = \I - \grtr \grtr^\T$ onto its orthogonal complement $\orthcompl{\{\grtr\}}$. Note that these projections are orthogonal, since $\grtr \in \S^{n-1}$ by assumption. Moreover, we define the functional $\normpol{\x}{L} := \sup_{\h \in L} \sp{\x}{\h}$ for $\x \in \R^n$.\footnote{If $L$ is a symmetric convex body, inducing a norm $\norm{\cdot}_L$, then $\normpol{\cdot}{L}$ is precisely the associated dual norm.}

Using that $\orthcompl{\proj} + \proj = \I$, we obtain
\begin{equation}
\mean[\sup_{\h \in L} \sp{\A^\T \z}{\h}] = \mean\normpol{\A^\T \z}{L} \leq \underbrace{\mean\normpol{\orthcompl{\proj}\A^\T \z}{L}}_{=: T_1'} + \underbrace{\mean\normpol{\proj\A^\T \z}{L}}_{=: T_2'}.
\end{equation}
Let us first estimate $T_1'$. Since $\A$ is a Gaussian matrix, $\orthcompl{\proj}\A^\T$ and $\proj\A^\T$ are independent of each other. Observing that the columns of $\proj\A^\T$ are given by $\sp{\a_i}{\grtr}\grtr$ and that 
\begin{equation}
z_i = \loss'(\sp{\a_i}{\grtrmu}, y_i) = \loss'(\scalfac\sp{\a_i}{\grtr}, \fobs(\sp{\a_i}{\grtr})),
\end{equation}
it turns out that $\orthcompl{\proj}\A^\T$ is also independent of $\z$ (entry-wise). Thus, introducing an independent copy $\Aind$ of $\A$ (also independent of $\z$), the random vectors $\orthcompl{\proj}\A^\T\z$ and $\orthcompl{\proj}\Aind^\T\z$ have the same probability distribution. Therefore,
\begin{equation}\label{eq:proof:recovery:residual:T1}
T_1' = \mean\normpol{\orthcompl{\proj}\A^\T \z}{L} = \mean\normpol{\orthcompl{\proj}\Aind^\T \z}{L} = \mean\normpol{(\orthcompl{\proj}\Aind^\T + \underbrace{\mean[\proj\Aind^\T]}_{= \vec{0}}) \z}{L}. 
\end{equation}
Now, by Jensen's inequality with respect to $\proj\Aind^\T$,\footnote{Note that the outer expectation in \eqref{eq:proof:recovery:residual:T1} is taken over $\A$ and $\orthcompl{\proj}\Aind^\T$, whereas the inner expectation is taken over $\proj\Aind^\T$.} \eqref{eq:proof:recovery:residual:T1} can be bounded by
\begin{equation}
\mean\normpol{(\orthcompl{\proj}\Aind^\T + \proj\Aind^\T) \z}{L} = \mean\normpol{\Aind^\T\z}{L}.
\end{equation}
Conditioning on $\z$, we observe that $\Aind^\T\z$ has the same distribution as $\lnorm{\z} \cdot \gaussian$, where $\gaussian \distributed \Normdistr{\vec{0}}{\I}$ is independent of $\z$. So, we finally obtain
\begin{align}
T_1' &\leq \mean\normpol{\Aind^\T\z}{L} = \mean[\lnorm{\z} \cdot \sup_{\h \in L} \sp{\gaussian}{\h}] = \mean[\lnorm{\z}] \cdot \mean[\sup_{\h \in L} \sp{\gaussian}{\h}]\\
&\leq \sqrt{\mean[\lnorm{\z}^2]} \cdot \meanwidth{L} = \sqrt{m} \cdot \modeldev \cdot \meanwidth{L},
\end{align}
where in the last step, we have used the definition of $\modeldev$ (cf. \eqref{eq:results:recovery:modelparam:modeldev}) as well as $\sp{\a_i}{\grtr} \distributed \Normdistr{0}{1}$.

Next, we shall bound the term $T_2'$. For this purpose, we introduce new random variables $\xi_i := z_i \sp{\a_i}{\grtr}$ for $i \in [m]$ and observe that
\begin{equation}
\proj\A^\T \z = \sum_{i = 1}^m z_i \sp{\a_i}{\grtr} \grtr = \Big(\sum_{i = 1}^m \xi_i\Big) \cdot \grtr.
\end{equation}
By the Cauchy-Schwarz inequality and $L \subset t \ball[2][n]$, we therefore obtain
\begin{align}
T_2' &= \mean\normpol{\proj\A^\T \z}{L} = \mean[\sup_{\h \in L} \sp{(\textstyle\sum_{i = 1}^m \xi_i) \cdot \grtr}{\h}] \\
&\leq \mean{}[\sup_{\h \in L} (\abs{\textstyle\sum_{i = 1}^m \xi_i} \cdot \underbrace{\lnorm{\grtr}}_{= 1} \cdot \underbrace{\lnorm{\h}}_{\leq t})] \leq t \cdot \mean\abs{\textstyle\sum_{i = 1}^m \xi_i}.
\end{align}
Finally, we apply the definitions of $\scalfac$ and $\modeldeveta$ in \eqref{eq:results:recovery:modelparam} to observe that\footnote{Note that this is actually the only place in the proof of Theorem~\ref{thm:results:recovery} where we apply the definition of $\scalfac$. In fact, it was precisely chosen such that $\xi_1, \dots, \xi_m$ have mean zero.}
\begin{align}
\mean[\xi_i] &= \mean[\loss'(\scalfac\sp{\a_i}{\grtr}, \fobs(\sp{\a_i}{\grtr})) \cdot \sp{\a_i}{\grtr}] = 0, \\
\mean[\xi_i^2] &= \mean[\loss'(\scalfac\sp{\a_i}{\grtr}, \fobs(\sp{\a_i}{\grtr}))^2 \cdot \sp{\a_i}{\grtr}^2] = \modeldeveta^2,
\end{align}
where we have again used that $\sp{\a_i}{\grtr} \distributed \Normdistr{0}{1}$. Together with the (pairwise) independence of $\xi_1, \dots, \xi_m$, this implies
\begin{equation}
T_2' \leq t \cdot \mean\abs{\textstyle\sum_{i = 1}^m \xi_i} \leq t \cdot \sqrt{\mean[(\textstyle \sum_{i = 1}^m \xi_i)^2]} = t \cdot \sqrt{\textstyle \sum_{i = 1}^m \mean[\xi_i^2]} = t \cdot \sqrt{m} \cdot \modeldeveta.
\end{equation}
The proof is now complete. \qedhere
\end{proof}

\begin{proof}[Proof of Lemma \ref{lem:proof:recovery:noise}]
The key ingredient of our proof is a uniform concentration inequality for \mbox{(sub-)Gaussian} random matrices, which was proven in \cite[Thm.~1.3]{liaw2016randommat}:
\begin{equation}\label{eq:proof:recovery:noise:concentration}
\prob[\sup_{\h \in L} \abs{\tfrac{1}{\sqrt{m}} \lnorm{\A\h} - \lnorm{\h}} \leq C' \cdot \tfrac{\meanwidth{L} + u \cdot \sup_{\h'\in L}\lnorm{\h'}}{\sqrt{m}}] \geq 1 - \exp(-u^2)
\end{equation}
for all $u \geq 0$ and a numerical constant $C' > 0$.

Setting $u := \sqrt{m}$ and using that $L \subset t \ball[2][n]$, we obtain the following bound from \eqref{eq:proof:recovery:noise:concentration}:
\begin{align}
1 - \exp(-m) \leq{} &\prob{}[\sup_{\h \in L} \underbrace{\abs{\tfrac{1}{\sqrt{m}} \lnorm{\A\h} - \lnorm{\h}}}_{\geq \tfrac{1}{\sqrt{m}} \lnorm{\A\h} - t} \leq C' \cdot ( \tfrac{\meanwidth{L}}{\sqrt{m}} + \underbrace{\sup_{\h'\in L}\lnorm{\h'}}_{\leq t})] \\
\leq{}& \prob{}[\sup_{\h \in L} \tfrac{1}{\sqrt{m}} \lnorm{\A\h} - t \leq C' \cdot ( \tfrac{\meanwidth{L}}{\sqrt{m}} + t)] \\
={}& \prob{}[\sup_{\h \in L} \tfrac{1}{\sqrt{m}} \lnorm{\A\h} \leq C' \cdot \tfrac{\meanwidth{L}}{\sqrt{m}} + \underbrace{(C' + 1)}_{=: C} t] \\
\leq{}& \prob{}[\sup_{\h \in L} \tfrac{1}{\sqrt{m}} \lnorm{\A\h} \leq C \cdot (\tfrac{\meanwidth{L}}{\sqrt{m}} + t)],
\end{align}
which is precisely the claim of Lemma~\ref{lem:proof:recovery:noise}.
\end{proof}

\subsection{Proof of Theorem \ref{thm:results:rsc}}
\label{subsec:proof:rsc}

Throughout this subsection, we shall assume that the assumptions of Theorem~\ref{thm:results:rsc} are indeed satisfied.
Since $\loss$ is twice continuously differentiable in the first variable due to \ref{thm:results:rsc:regularity}, we can apply Taylor's theorem, which states that for every $\x \in \R^n$ there exists $\lambda(\x) \in \intvcl{0}{1}$ such that
\begin{align}
\losstaylor{\x}{\grtrmu}{\yadv} &= \tfrac{1}{2} (\x - \grtrmu)^\T \gradient^2 \lossemp[\yadv](\grtrmu + \lambda(\x)(\x - \grtrmu))(\x - \grtrmu) \\
&= \tfrac{1}{2m} \sum_{i = 1}^m \loss''(\sp{\a_i}{\grtrmu + \lambda(\x)(\x - \grtrmu)}, \tilde{y}_i) \abs{\sp{\a_i}{\x - \grtrmu}}^2, \label{eq:proof:rsc:hessian}
\end{align}
where $\gradient^2 \lossemp[\yadv]$ denotes the Hessian matrix of $\lossemp[\yadv]$. If $\loss''$ would be bounded by a positive constant from below, we could proceed as in \cite[Lem.~4.4]{plan2015lasso}. But we merely assume that such a bound only holds in a certain neighborhood around the origin, so that the situation becomes more difficult. 
Therefore, let us first apply the assumption \ref{thm:results:rsc:strongconvexity} to find a simple lower bound for \eqref{eq:proof:rsc:hessian}.
For this purpose, we introduce the abbreviation $\h := \x - \grtrmu$ and make use of the standard inequality $(\sum_{i = 1}^m v_i^2)^{\frac{1}{2}} \geq \frac{1}{\sqrt{m}}\sum_{i = 1}^m \abs{v_i}$ for $v_1, \dots, v_m \in \R$:
\begin{align}
\losstaylor{\x}{\grtrmu}{\yadv}^{\frac{1}{2}} &\geq \tfrac{1}{\sqrt{2}m} \sum_{i = 1}^m \underbrace{\loss''(\sp{\a_i}{\grtrmu + \lambda(\x)\h}, \tilde{y}_i)^{\frac{1}{2}}}_{\substack{\\[3pt] \mathclap{\geq \sqrt{C_{M,N}}\cdot\indset{\intvop{-M}{M}\times\intvop{-N}{N}}(\sp{\a_i}{\grtrmu + \lambda(\x)\h}, \tilde{y}_i)}}} \abs{\sp{\a_i}{\h}} \\
&\geq \tfrac{\sqrt{C_{M,N}}}{\sqrt{2}m} \sum_{i = 1}^m \indset{M}(\sp{\a_i}{\grtrmu + \lambda(\x)\h}) \cdot \indset{N}(\tilde{y}_i) \cdot \abs{\sp{\a_i}{\h}}, \label{eq:proof:rsc:taylorloss}
\end{align}
where $\indset{M'} := \indset{\intvop{-M'}{M'}}$ for $M' > 0$.
Observing that
\begin{align}
\abs{\sp{\a_i}{\grtrmu + \lambda(\x)\h}} &\leq \abs{\sp{\a_i}{\grtrmu}} + \abs{\sp{\a_i}{\h}},\\
\abs{\tilde{y}_i} &\leq \abs{y_i} + \underbrace{\lnorm{\y - \yadv}[\infty]}_{=: \advdev_\infty} = \abs{\fobs(\sp{\a_i}{\grtr})} + \advdev_\infty, 
\end{align}
we can estimate the step functions in \eqref{eq:proof:rsc:taylorloss} by
\begin{align}
	\indset{M}(\sp{\a_i}{\grtrmu + \lambda(\x)\h}) &\geq \indset{\frac{M}{2}}(\sp{\a_i}{\grtrmu})  \indset{\frac{M}{2}}(\sp{\a_i}{\h}), \\
	\indset{N}(\tilde{y}_i) &\geq \indset{N}(\abs{\fobs(\sp{\a_i}{\grtr})}+ \advdev_\infty).
\end{align}
Moreover, we shall replace $\indset{\frac{M}{2}}(\sp{\a_i}{\h})$ by a ``hat indicator'' function:\footnote{Such a technical step is necessary because we will require Lipschitz continuity of this function later on.}
\begin{equation}
\indset{\frac{M}{2}}(\sp{\a_i}{\h}) \geq \indset{\frac{M}{2}}^{(s)}(\sp{\a_i}{\h}) := 
	\begin{cases} (\tfrac{M}{2} - \abs{\sp{\a_i}{\h}}) / \tfrac{M}{2}, & \abs{\sp{\a_i}{\h}} \leq \tfrac{M}{2}, \\
		0, & \text{otherwise}.
	\end{cases}
\end{equation}
This allows us to control $\losstaylor{\x}{\grtrmu}{\yadv}^{\frac{1}{2}}$ as follows:
\begin{align}
\losstaylor{\x}{\grtrmu}{\yadv}^{\frac{1}{2}} &\geq \tfrac{\sqrt{C_{M,N}}}{\sqrt{2}m} \sum_{i = 1}^m \abs{\underbrace{\indset{\frac{M}{2}}(\sp{\a_i}{\grtrmu}) \indset{N}(\abs{\fobs(\sp{\a_i}{\grtr})} + \advdev_\infty) \indset{\frac{M}{2}}^{(s)}(\sp{\a_i}{\h}) \sp{\a_i}{\h}}_{=: \empproc(\a_i,\h)}} \\
&= \tfrac{\sqrt{C_{M,N}}}{\sqrt{2}m} \sum_{i = 1}^m \abs{\empproc(\a_i,\h)}. \label{eq:proof:rsc:empprocess}
\end{align}
Hence, it suffices to (uniformly) bound the empirical process $\tfrac{\sqrt{C_{M,N}}}{\sqrt{2}m} \sum_{i = 1}^m \abs{\empproc(\a_i,\h)}$ by $C_\RSC^{1/2}\lnorm{\h}$. 
A very powerful and fairly general framework to prove uniform lower bounds for non-negative empirical processes was recently developed by Mendelson and collaborators \cite{koltchinskii2015smallball,lecue2014cs,mendelson2014diameter,mendelson2014learning,mendelson2014learninggeneral}, often referred to as \define{Mendelson's small ball method}.
Here, we shall adapt a variant of this approach which was presented by Tropp in \cite[Prop.~2.5.1]{tropp2014convex}.
Following his strategy, let us first introduce the following \define{marginal tail function} (also called \define{small ball function}):
\begin{equation}
Q_{v}(\sset_t) := \inf_{\h \in \sset_t} \prob[\abs{\empproc(\gaussian,\h)} \geq v], \quad v \geq 0, \quad \gaussian \distributed \Normdistr{\vec{0}}{\I},
\end{equation}
where we have put $\sset_t := (\sset - \grtrmu) \intersec t\S^{n-1}$.
The next proposition summarizes our version of Mendelson's small ball method:
\begin{proposition}[Mendelson's small ball method]\label{prop:proof:rsc:mendelson}
Under the assumptions of Theorem~\ref{thm:results:rsc}, we have:
\begin{thmlist}
\item\label{prop:proof:rsc:mendelson:process}
	The empirical process in \eqref{eq:proof:rsc:empprocess} has the following uniform lower bound on $\sset_t$:
	\begin{equation}\label{eq:proof:rsc:mendelson:process}
		\inf_{\h \in \sset_t} \tfrac{1}{m} \sum_{i = 1}^m \abs{\empproc(\a_i,\h)} \geq t \cdot Q_{2t}(\sset_t) - \frac{2\meanwidth{\sset_t}}{\sqrt{m}} - \frac{t \cdot u}{\sqrt{m}}
	\end{equation}
	with probability at least $1-\exp(-u^2/2)$ for any $u > 0$.
\item\label{prop:proof:rsc:mendelson:tail}
	The marginal tail function is bounded from below by a numerical constant $C_0$:
	\begin{equation}\label{eq:proof:rsc:mendelson:marginalbound}
		Q_{2t}(\sset_t) \geq C_0 > 0.
	\end{equation}
\end{thmlist}
\end{proposition}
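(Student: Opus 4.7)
The plan is to handle the two parts separately, following \emph{Mendelson's small ball method} in the form given in \cite[Prop.~2.5.1]{tropp2014convex}. For part (i) I would start from the trivial truncation $|\empproc(\a_i,\h)| \geq t \cdot \indprob{|\empproc(\a_i,\h)| \geq t}$ and, in order to enable symmetrization, replace the (non-continuous) indicator by a $(1/t)$-Lipschitz piecewise-linear function $\phi_t \colon \R \to [0,1]$ sandwiched as $\indprob{|u| \geq 2t} \leq \phi_t(u) \leq \indprob{|u| \geq t}$. Writing $D(\h) := \bigl|\tfrac{1}{m}\sum_i \phi_t(\empproc(\a_i,\h)) - \mean[\phi_t(\empproc(\gaussian,\h))]\bigr|$, this yields
\begin{equation*}
\tfrac{1}{m}\sum_i |\empproc(\a_i,\h)| \;\geq\; t \cdot \mean[\phi_t(\empproc(\gaussian,\h))] - t \cdot D(\h) \;\geq\; t \cdot Q_{2t}(\sset_t) - t \cdot D(\h),
\end{equation*}
which already produces the leading $t \cdot Q_{2t}(\sset_t)$ term uniformly in $\h \in \sset_t$.

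The remaining task in (i) is to control $\sup_{\h \in \sset_t} D(\h)$, split into a \emph{complexity} piece and a \emph{concentration} piece. Using that $v \mapsto \indset{M/2}^{(s)}(v) v$ is $1$-Lipschitz (by a direct computation on each sub-interval of $\intvop{-M/2}{M/2}$), that the two $\{0,1\}$-valued step factors in $\empproc$ are independent of $\h$, and that $\phi_t$ itself is $(1/t)$-Lipschitz, the composition $\phi_t \circ \empproc(\cdot,\h)$ is $(1/t)$-Lipschitz in $\sp{\a_i}{\h}$. Standard symmetrization followed by the Ledoux--Talagrand contraction principle and the Rademacher-to-Gaussian comparison applied to the linear process $\h \mapsto \sp{\a_i}{\h}$ yield $\mean[\sup_\h D(\h)] \lesssim \meanwidth{\sset_t}/(t\sqrt{m})$; concentration around this mean, producing the $u/\sqrt{m}$ deviation at probability $1-\exp(-u^2/2)$, follows from bounded differences since $\phi_t \in [0,1]$. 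Multiplying back by $t$ reproduces both error terms in \eqref{eq:proof:rsc:mendelson:process}.

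For part (ii), I would fix $\h \in \sset_t$ and use the inclusion $\{|\empproc(\gaussian,\h)| \geq 2t\} \supseteq A \cap B \cap C$, with
\begin{align*}
A &:= \{|\sp{\gaussian}{\grtrmu}| \leq M/2\}, \\
B &:= \{|\fobs(\sp{\gaussian}{\grtr})| + \advdev_\infty \leq N\}, \\
C &:= \{|\indset{M/2}^{(s)}(\sp{\gaussian}{\h})\,\sp{\gaussian}{\h}| \geq 2t\}.
\end{align*}
The key event is $C$: since $\lnorm{\h} = t$, we have $\sp{\gaussian}{\h} = t Z$ with $Z \distributed \Normdistr{0}{1}$, and the elementary identity $u(1-2u/M) = |\indset{M/2}^{(s)}(u) \cdot u|$ combined with $M \geq 32 t$ shows that $|Z|$ lying in a fixed compact interval (e.g., $\intvcl{3}{13}$) already forces $C$; hence $\prob[C] \geq c_0$ for an absolute $c_0 > 0$. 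The remaining events are standard: $\prob[A^c]$ is a Gaussian tail of $\Normdistr{0}{\scalfac^2}$ beyond $M/2 \geq (C_1/2)|\scalfac|$, while $\prob[B^c] \leq 1/C_2$ by Markov's inequality using $N - \advdev_\infty \geq C_2\,\mean|\fobs(\gaussianuniv)|$. Choosing the numerical constants $C_1, C_2$ large enough that $\prob[A^c] + \prob[B^c] \leq c_0/2$, the union bound delivers $\prob[A \cap B \cap C] \geq c_0/2 =: C_0$, independent of $\h$.

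The main obstacle I expect is the contraction step in part (i): $\empproc(\a_i,\h)$ is not a Lipschitz function of a single linear functional of $\h$, but rather a product of two $\{0,1\}$-valued step factors depending only on $\a_i$ and a $1$-Lipschitz function of $\sp{\a_i}{\h}$, which does not immediately match the vanilla Ledoux--Talagrand form. The remedy is to condition on the two step factors (equivalently, restrict to the random index subset on which both equal $1$) before contracting, so that the contraction is applied sample-wise to the family $\{v \mapsto \phi_t(\indset{M/2}^{(s)}(v) v)\}$ composed with $v = \sp{\a_i}{\h}$. A secondary, purely cosmetic point is the bookkeeping of all numerical constants through the argument, which is needed to ensure that the resulting RSC constant $C_\RSC$ in Theorem~\ref{thm:results:rsc} depends only on $C_{M,N}$ and absolute quantities, as anticipated in Remark~\ref{rmk:results:rsc:rscconstant}.
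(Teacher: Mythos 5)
Your argument is correct and follows the paper's proof essentially step for step: Tropp's phrasing of Mendelson's small ball method for part (i), with Markov/truncation to a soft indicator, symmetrization, contraction, and bounded-differences concentration; and for part (ii) a direct lower bound on the small ball probability via Markov on the two constant step events plus an elementary computation for the $\sp{\gaussian}{\h}$ event (the paper uses $\indset{M/2}^{(s)}\geq\tfrac12\indset{M/4}$ and the interval $[4,8]$ where you keep the soft indicator and use $[3,13]$; both are fine given $M\geq 32t$). Two small points: the ``obstacle'' you flag around the step factors is not really one, because \cite[Eq.~(4.20)]{ledoux1991pbs} already allows a \emph{sample-dependent} contraction $\empproc_i$, so the paper simply absorbs the $\{0,1\}$-valued factors (which depend on $\a_i$ but not on $\h$) into $\empproc_i$, which is exactly your ``conditioning'' in different words; and the Rademacher-to-Gaussian comparison you mention is unnecessary here, since $\sum_i \eps_i \a_i$ has the same law as $\sqrt{m}\,\gaussian$ by the symmetry of the Gaussian $\a_i$, which is what the paper uses directly.
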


Before proving these statements, let us finish the proof of Theorem~\ref{thm:results:rsc}, which is now a simple consequence of Proposition~\ref{prop:proof:rsc:mendelson}:
\begin{proof}[Proof of Theorem~\ref{thm:results:rsc}]
We apply Proposition~\ref{prop:proof:rsc:mendelson} with $u = \sqrt{2 C_4 m}$ ($C_4 > 0$ is a numerical constant which is chosen later) and assume that the event of \eqref{eq:proof:rsc:mendelson:process} has indeed occurred (with probability at least $1 - \exp(-C_4\cdot m)$). Then, for any $\x \in K \intersec (t \S^{n-1} + \grtrmu)$, we can control \eqref{eq:proof:rsc:empprocess} as follows:
\begin{align}
	\losstaylor{\x}{\grtrmu}{\yadv}^{\frac{1}{2}} &\geq \tfrac{\sqrt{C_{M,N}}}{\sqrt{2}m} \sum_{i = 1}^m \abs{\empproc(\a_i,\h)} 
		\geq \sqrt{\tfrac{C_{M,N}}{2}} \cdot \Big( t \cdot Q_{2t}(\sset_t) - \frac{2\meanwidth{\sset_t}}{\sqrt{m}} - \frac{t \cdot \sqrt{2 C_4 m}}{\sqrt{m}}\Big) \\
	&\stackrel{\eqref{eq:proof:rsc:mendelson:marginalbound}}{\geq} t \cdot \sqrt{\tfrac{C_{M,N}}{2}} \cdot \Big( C_0 - \frac{2\meanwidth{\sset_t}}{t \cdot \sqrt{m}} - \sqrt{2 C_4 } \Big).
\end{align}
Moreover, the assumption \eqref{eq:results:rsc:meanwidth} implies that
\begin{equation}
\frac{\meanwidth{\sset_t}}{t \cdot \sqrt{m}} = \frac{\meanwidth{(\sset - \grtrmu) \intersec t\S^{n-1}}}{t \cdot \sqrt{m}} \leq \frac{\meanwidth[t]{\sset - \grtrmu}}{t \cdot \sqrt{m}} \leq \sqrt{\frac{1}{C_3}}.
\end{equation}
Adjusting the numerical constants $C_3$ and $C_4$ appropriately (by increasing the number of observations and decreasing the probability of success, respectively), we finally obtain
\begin{equation}
\losstaylor{\x}{\grtrmu}{\yadv} \geq t^2 \cdot \underbrace{\tfrac{C_{M,N}}{2} \cdot \Big( C_0 - \frac{2}{\sqrt{C_3}} - \sqrt{2 C_4 } \Big)^2}_{=: C_\RSC > 0} \geq C_\RSC \cdot t^2 = C_\RSC  \lnorm{\x - \grtrmu}^2,
\end{equation}
which completes the proof.
\end{proof}

Now, we provide a proof of Proposition~\ref{prop:proof:rsc:mendelson}. This is essentially a repetition of the arguments from \cite[Prop.~2.5.1]{tropp2014convex} but in a slightly more general setting, where the scalar products $\sp{\a_i}{\h}$ are replaced by $\empproc(\a_i,\h)$.
\begin{proof}[Proof of Proposition~\ref{prop:proof:rsc:mendelson}]
\begin{proofsteps}
\item
	At first, let us apply Markov's inequality:
	\begin{equation}\label{eq:proof:rsc:mendelson:markov}
		\tfrac{1}{m} \sum_{i = 1}^m \abs{\empproc(\a_i,\h)} \geq \tfrac{t}{m} \sum_{i = 1}^m \indprob{\abs{\empproc(\a_i,\h)} \geq t}.
	\end{equation}
	Now, we introduce a directional version of the marginal tail function $Q_{2t}(\sset_t)$, namely
	\begin{equation}
		Q_{2t}(\h) := \prob[\abs{\empproc(\gaussian,\h)} \geq 2t], \quad \h \in \sset_t, \quad \gaussian \distributed\Normdistr{\vec{0}}{\I}.
	\end{equation}
	Obviously, $Q_{2t} (\sset_t) = \inf_{h \in \sset_t}Q_{2t}(\h)$. Adding and subtracting this quantity in \eqref{eq:proof:rsc:mendelson:markov} and taking the infimum leads us to
	\begin{align}
		\inf_{\h \in \sset_t}\tfrac{1}{m} \sum_{i = 1}^m \abs{\empproc(\a_i,\h)} &\geq \inf_{\h \in \sset_t} \Big( t\cdot Q_{2t}(\h) - \tfrac{t}{m} \sum_{i = 1}^m (Q_{2t}(\h) - \indprob{\abs{\empproc(\a_i,\h)} \geq t}) \Big) \\
		&\geq t \cdot Q_{2t} (\sset_t) - \tfrac{t}{m} \cdot \sup_{\h \in \sset_t} \sum_{i = 1}^m (Q_{2t}(\h) - \indprob{\abs{\empproc(\a_i,\h)} \geq t}). \label{eq:proof:rsc:mendelson:mendelson}
	\end{align}
	Similarly to \cite[Prop.~2.5.1]{tropp2014convex}, we can now apply a deviation inequality for bounded differences in order to control this supremum (see also \cite[Thm.~6.2]{boucheron2013concentration}):
	\begin{equation}
	\sup_{\h \in \sset_t} \sum_{i = 1}^m (Q_{2t}(\h) - \indprob{\abs{\empproc(\a_i,\h)} \geq t}) \leq \mean[\sup_{\h \in \sset_t} \sum_{i = 1}^m (Q_{2t}(\h) - \indprob{\abs{\empproc(\a_i,\h)} \geq t})] + u \sqrt{m} \label{eq:proof:rsc:mendelson:deviation}
	\end{equation}
	with probability at least $1 - \exp(-u^2/2)$. Next, let us estimate the expected supremum by an empirical \define{Rademacher process}. For this, consider the soft indicator function
	\begin{equation}
		\psi_t \colon \R \to \intvcl{0}{1}, \ v \mapsto \psi_t(v) := \begin{cases}
			0, & \abs{v} \leq t, \\
			(\abs{v} - t) / t, & t < \abs{v} \leq 2t, \\
			1, & 2t < \abs{v},
		\end{cases}
	\end{equation}
	satisfying $\indprob{\abs{v} \geq 2t} \leq \psi_t(v) \leq \indprob{\abs{v} \geq t}$ for all $v \in \R$. By a classical symmetrization argument (cf. \cite[Lem.~6.3]{ledoux1991pbs}), we obtain
	\begin{align}
		&\mean[\sup_{\h \in \sset_t} \sum_{i = 1}^m (Q_{2t}(\h) - \indprob{\abs{\empproc(\a_i,\h)} \geq t})] \\
		={}& \mean[\sup_{\h \in \sset_t} \sum_{i = 1}^m (\mean[\indprob{\abs{\empproc(\a_i,\h)} \geq 2t}] - \indprob{\abs{\empproc(\a_i,\h)} \geq t})] \\
		\leq{}& \mean[\sup_{\h \in \sset_t} \sum_{i = 1}^m (\mean[\psi_t(\empproc(\a_i,\h))] - \psi_t(\empproc(\a_i,\h)))] \\
		\leq{}& 2 \mean[\sup_{\h \in \sset_t} \sum_{i = 1}^m \eps_i \psi_t(\empproc(\a_i,\h))] = \frac{2}{t} \cdot \mean_{\A}\mean_{\eps}[\sup_{\h \in \sset_t} \sum_{i = 1}^m \eps_i \cdot  t\psi_t(\empproc(\a_i,\h))],\label{eq:proof:rsc:mendelson:symmetrization}
	\end{align}
	where $\eps_1, \dots, \eps_m$ are independent Rademacher variables.\footnote{That is, $\prob[\eps_i = 1] = \prob[\eps_i = -1] = \frac{1}{2}$.} Note that in the last step, we have used Fubini's theorem to separate the expectations over $\A$ and $\eps_1, \dots, \eps_m$. Now, we would like to invoke a contraction principle to bound the inner expectation (over the Rademacher variables). For this, let us observe that $\empproc(\a_i,\h)$ depends on $\h$ only in terms of $\sp{\a_i}{\h}$. Thus, we may define $\empproc_i(\sp{\a_i}{\h}) := t\psi_t(\empproc(\a_i,\h))$, where $\empproc_i$ still depends on $\a_i$ but not on $\h$ anymore. Recalling the definition of $\empproc$ and $\psi_t$, one easily checks that $\empproc_i$ is in fact a contraction, i.e.,
	\begin{equation}
		\abs{\empproc_i(v_1) - \empproc_i(v_2)} \leq \abs{v_1 - v_2}, \quad \text{for all $v_1, v_2 \in \R$,}
	\end{equation}
	Since $\empproc_i(0) = 0$, we can apply the Rademacher comparison principle from \cite[Eq.~(4.20)]{ledoux1991pbs}, where we perform a change of variables with $\vec{v} = (v_1, \dots, v_n) := \A \h$:
	\begin{align}
		\mean_{\eps}[\sup_{\h \in \sset_t} \sum_{i = 1}^m \eps_i \cdot  t\psi_t(\empproc(\a_i,\h))] &= \mean_{\eps}[\sup_{\h \in \sset_t} \sum_{i = 1}^m \eps_i \empproc_i(\sp{\a_i}{\h})] = \mean_{\eps}[\sup_{\vec{v} \in \A\sset_t} \sum_{i = 1}^m \eps_i \empproc_i(v_i)] \\
		&\leq \mean_{\eps}[\sup_{\vec{v} \in \A\sset_t} \sum_{i = 1}^m \eps_i v_i] = \mean_{\eps}[\sup_{\h \in \sset_t} \sum_{i = 1}^m \eps_i \sp{\a_i}{\h}].
	\end{align}
	Applying this to \eqref{eq:proof:rsc:mendelson:symmetrization}, we finally obtain
	\begin{align}
		&\mean[\sup_{\h \in \sset_t} \sum_{i = 1}^m (Q_{2t}(\h) - \indprob{\abs{\empproc(\a_i,\h)} \geq t})] \leq \frac{2}{t} \cdot \mean_{\A}\mean_{\eps}[\sup_{\h \in \sset_t} \sum_{i = 1}^m \eps_i \sp{\a_i}{\h}] \\
		={}& \frac{2}{t} \cdot \mean[\sup_{\h \in \sset_t} \sum_{i = 1}^m \sp{\a_i}{\h}] = \frac{2\sqrt{m}}{t} \cdot \mean[\sup_{\h \in \sset_t} \sp{\gaussian}{\h}] = \frac{2\sqrt{m}}{t} \cdot \meanwidth{\sset_t},
	\end{align}
	where we have used that $\sum_{i = 1}^m \eps_i\a_i \distributed \sum_{i = 1}^m \a_i =: \sqrt{m} \cdot \gaussian \distributed \Normdistr{\vec{0}}{m\I}$. This estimate, combined  with \eqref{eq:proof:rsc:mendelson:deviation}, yields the desired lower bound for \eqref{eq:proof:rsc:mendelson:mendelson}:
	\begin{align}
		\inf_{\h \in \sset_t}\tfrac{1}{m} \sum_{i = 1}^m \abs{\empproc(\a_i,\h)} &\geq t \cdot Q_{2t} (\sset_t) - \frac{t}{m} \cdot \Big(\frac{2\sqrt{m}}{t} \cdot \meanwidth{\sset_t} + u \sqrt{m}\Big) \\
		&= t \cdot Q_{2t}(\sset_t) - \frac{2\meanwidth{\sset_t}}{\sqrt{m}} - \frac{t \cdot u}{\sqrt{m}}.
	\end{align}
\item
	Let us fix some $\h \in \sset_t$. Noting that $\indset{M/2}^{(s)}(\sp{\a_i}{\h}) \geq \tfrac{1}{2} \indset{M/4}(\sp{\a_i}{\h})$, we can estimate the directional marginal tail function as follows (for any $i \in [m]$):
	\begin{align}
		Q_{2t}(\h) &= \prob[\abs{\empproc(\gaussian,\h)} \geq 2t] \\
		&= \prob[\abs{\indset{\frac{M}{2}}(\sp{\a_i}{\grtrmu}) \indset{N}(\abs{\fobs(\sp{\a_i}{\grtr})}+ \advdev_\infty) \indset{\frac{M}{2}}^{(s)}(\sp{\a_i}{\h}) \sp{\a_i}{\h}} \geq 2t] \\
		&\geq \prob[\abs{\indset{\frac{M}{2}}(\sp{\a_i}{\grtrmu}) \indset{N}(\abs{\fobs(\sp{\a_i}{\grtr})}+ \advdev_\infty) \cdot \tfrac{1}{2} \indset{\frac{M}{4}}(\sp{\a_i}{\h}) \sp{\a_i}{\h}} \geq 2t] \\
		&= \prob[\abs{\sp{\a_i}{\grtrmu}} \leq \tfrac{M}{2},\ \abs{\fobs(\sp{\a_i}{\grtr})}+ \advdev_\infty \leq N,\ 4t \leq \abs{\sp{\a_i}{\h}} \leq \tfrac{M}{4}] \\
		&\geq \prob[\abs{\sp{\a_i}{\grtrmu}} \leq \tfrac{M}{2},\ \abs{\fobs(\sp{\a_i}{\grtr})}+ \advdev_\infty \leq N,\ 4 \leq \abs{\sp{\a_i}{\tfrac{\h}{t}}} \leq 8], \label{eq:proof:rsc:mendelson:marginal}
	\end{align}
	where in the last step, we have divided by $t$ and used that $32t \leq M$. Note that $\gaussianuniv_0 := \sp{\a_i}{\grtr}$ and $\gaussianuniv := \sp{\a_i}{\tfrac{\h}{t}}$ are standard Gaussians whose probability distribution particularly does not dependent on $t$. To bound \eqref{eq:proof:rsc:mendelson:marginal} from below, we first estimate the complementary events by Markov's inequality:
	\begin{align}
		\prob[\abs{\sp{\a_i}{\grtrmu}} > \tfrac{M}{2}] &\leq \frac{2\abs{\scalfac}\mean[\abs{\gaussianuniv_0}]}{M} = \frac{2\abs{\scalfac}\sqrt{\frac{2}{\pi}}}{M}, \\
		\prob[\abs{\fobs(\sp{\a_i}{\grtr})} > N - \advdev_\infty] &\leq 
		\frac{\mean[\abs{\fobs(\gaussianuniv_0)}]}{N - \advdev_\infty}.\label{eq:proof:rsc:mendelson:complementary}
	\end{align}
	Putting $C' := \prob[\abs{\gaussianuniv} \in \intvclop{0}{4} \union \intvop{8}{\infty}] < 1$, we end up with
	\begin{align}
		Q_{2t}(\h) &\geq \prob[\abs{\sp{\a_i}{\grtrmu}} \leq \tfrac{M}{2},\ \abs{\fobs(\sp{\a_i}{\grtr})} + \advdev_\infty \leq N,\ 4 \leq \abs{\sp{\a_i}{\tfrac{\h}{t}}} \leq 8] \\
		&\geq 1 - \prob[\abs{\sp{\a_i}{\grtrmu}} > \tfrac{M}{2}] - \prob[\abs{\fobs(\sp{\a_i}{\grtr})} > N - \advdev_\infty] - C' \\
		&\geq 1 - \frac{2\abs{\scalfac}\sqrt{\frac{2}{\pi}}}{M} - \frac{\mean[\abs{\fobs(\gaussianuniv_0)}]}{N - \advdev_\infty} - C' =: C_0.
	\end{align}
	Due to the assumptions on $M$ and $N$ in \ref{thm:results:rsc:strongconvexity}, with $C_1$ and $C_2$ appropriately chosen, it follows that $C_0$ is indeed positive. Since $C_0$ is also independent of $\h$, we can take the infimum over all $\h \in \sset_t$, which proves the claim. \qedhere
\end{proofsteps}
\end{proof}

\begin{remark}
\begin{rmklist}
\item
	The \emph{small ball function} $Q_{2t}(\sset_t)$ captures the probability that $\empproc(\gaussian,\h)$ is not too close to zero for any vector $\h \in \sset - \grtrmu$ lying on a ``small'' sphere $tS^{n-1}$. Thus, if this quantity is bounded from below, the corresponding empirical process \eqref{eq:proof:rsc:empprocess} is very likely to be distant from zero as well (as long as $m$ is sufficiently large; cf. Proposition~\ref{prop:proof:rsc:mendelson}\ref{prop:proof:rsc:mendelson:process}).
	In particular, it has turned out that the explicit definition of $\empproc(\gaussian,\h)$ is only involved the second part of Proposition~\ref{prop:proof:rsc:mendelson}, where we have dealt with $Q_{2t}(\sset_t)$.
\item
	The (numerical) constants in the previous proofs may be improved. For instance, $C'$ could be chosen smaller in the second part of Proposition~\ref{prop:proof:rsc:mendelson} if the ``jump'' of soft indicator function $\psi_t$ in the first part is made ``sharper.'' Moreover, one might invoke tighter bounds than Markov's inequality in \eqref{eq:proof:rsc:mendelson:complementary}. But since this would not concern the qualitative behavior of the approximation error, we leave these details to the reader.
	Apart from that, the above argumentation does not rely on the definition of $\scalfac$ in \eqref{eq:results:recovery:modelparam:scalfac}, so that Theorem~\ref{thm:results:rsc} actually holds for any $\scalfac \neq 0$.
\item
	Using a \define{chaining argument} similarly to \cite[Thm.~2.6.3]{tropp2014convex}, one might even extend the statement of Theorem~\ref{thm:results:rsc} to sub-Gaussian vectors. But note that the underlying random distribution must not be too ``spiky,'' since otherwise, the small ball function $Q_{2t}(\sset_t)$ cannot be appropriately controlled anymore. \qeddiamond
\end{rmklist}
\end{remark}


\section*{Acknowledgements}
The author thanks Roman Vershynin and Gitta Kutyniok for fruitful discussions.
This research is supported by the Einstein Center for Mathematics Berlin (ECMath), project grant CH2.

\renewcommand*{\bibfont}{\small}
\bibliographystyle{abbrv}
\bibliography{general-estimation}

\end{document}